\newtheorem{thm}{Theorem}[section]
\newtheorem{lem}[thm]{Lemma}
\newtheorem{cor}[thm]{Corollary}
\newtheorem{prop}[thm]{Proposition}
\theoremstyle{definition}
\newtheorem{defin}[thm]{Definition}
\newtheorem{ex}[thm]{Example}
\theoremstyle{remark}
\newtheorem{rk}[thm]{Remark}
\numberwithin{equation}{section}
\def\N{{\mathbb N}}    
\def\R{{\mathbb R}}    
\newcommand{\eps}{\varepsilon} 
\renewcommand{\phi}{\varphi} 
\newcommand{\libr}{\llbracket} 
\newcommand{\ribr}{\rrbracket} 
\newcommand{\tild}[1]{\tilde{#1}}
\newcommand{\inter}[1]{\text{int}(#1)}
\newcommand{\inner}[2]{\langle #1,#2 \rangle}
\newcommand{\intef}[1]{\int_{\gamma^{#1}}{f\,ds}}
\newcommand{\tub}[1][\eps_1]{\mathcal{N}_{#1}}
\newcommand{\tubb}[1][\eps_1]{\mathcal{N}^{*}_{#1}}
\newcommand{\tps}[1][e,p]{t_{#1}}
\newcommand{\sps}[1][e,p]{s_{#1}}
\newcommand{\bdry}{\partial M}
\newcommand{\ray}{\mathcal{I}f}
\title{Reconstruction of piecewise constant functions from X-ray data}
\date{\today}
\keywords{X-ray transform, integral geometry, inverse problems}
\author[V. Lebovici]{Vadim Lebovici}
    \email{vadim.lebovici@ens.fr}
    \address{Département de mathématiques et applications, Ecole Normale Supérieure, 45, rue d'Ulm, 75005 Paris, France}
    \curraddr{}
\thanks{The author would like to first thank Mikko Salo for bringing this problem to his attention, for having closely followed its resolution and for their helpful discussions. The author is also thankful to the anonymous referees for useful suggestions.}
\begin{document}

   \begin{abstract}
       We show that on a two-dimensional compact nontrapping Riemannian manifold with strictly convex boundary, a piecewise constant function can be recovered from its integrals over geodesics. We adapt the injectivity proof which uses variations through geodesics to recover the function and we improve this result when the manifold is simple and the function is constant on tiles with geodesic edges, showing that the Jacobi fields of these variations are sufficient. We give also explicit formulas for the values near the boundary. We finally study the stability of the reconstruction method.
   \end{abstract}
   
    \maketitle
   
    \tableofcontents
    
    \section{Introduction}
        X-ray tomography consists in finding the attenuation coefficient at every point of a non-homogeneous medium from the given attenuation of every ray of light passing through it. If $(M,g)$ is a compact Riemannian manifold with boundary, the X-ray transform $\ray$ of a function $f : M \to \R$ is the collection of the integrals of $f$ over all geodesics joining boundary points. The mathematical formulation of X-ray tomography is the inverse problem consisting in the recovering of $f$ from the knowledge of $\ray$. First introduced by Fritz John in 1938 \cite{F38}, X-ray transform is one of the cornerstones of geometric inverse problems, arising for instance in the Calderón problem \cite{CALDERON2006,SU87} or the boundary and scattering rigidity problems \cite{Paternain2014,Uhlmann2014}.

The idea of recovering information on a medium from measurements at its boundary gave rise to various non-invasive imaging methods, brightly summed up in \cite{ilmavirta2018integral}. As a short overview, X-ray computerized tomography allowed non-invasive medical imaging and non-destructive volumetric study of rare specimens (fossils, meteorites) \cite{schwarz2005neutron}, seismic tomography is of use in seismology to reconstruct the density inside the Earth \cite{herglotz1905uber,wiechert1907erdbebenwellen}, acoustic tomography is used in ocean imaging \cite{munk1979ocean} and neutron spin tomography \cite{sales2018three} in mineralogy and geochemistry \cite{WinklerBjoern}. 

Three questions arise in the X-ray tomography in order: first the injectivity of $\mathcal{I}$, second whether or not $f$ can be recovered from the knowledge of $\ray$ and finally the stability of such a reconstruction.

The injectivity has already been studied for simple manifolds by Mukhometov \cite{Mukho77}, Anikonov \cite{Anikonov2009} and Sharafutdinov \cite{sharafutdinov2012integral}. In dimension strictly greater than two, injectivity has been proved without the assumption that the manifold is simple by Uhlmann and Vasy in \cite{Uhlmann2016}. See also \cite{ilmavirta2018integral, Paternain2014} for a more comprehensive list of results. It has been conjectured by Paternain, Salo and Uhlmann \cite{paternainsalouhlmann2013} that the X-ray transform is injective on compact nontrapping Riemannian manifolds with strictly convex boundary. This conjecture has been proved by Ilmavirta, Lehtonen and Salo \cite{ILS} for the simpler tomography of piecewise constant functions (see \cref{def:piecewise constant function}). Here we say that the Riemannian manifold $(M,g)$ has \emph{strictly convex boundary} if the second fundamental form of $\bdry$ in $M$ is positive definite and is \emph{nontrapping} if for any $(p,w)\in TM$, the geodesic starting from $p$ with initial velocity $w$ meets the boundary in finite time.

The work in \cite{ILS} proves injectivity for measurements related to infinitely many geodesics. Once injectivity is known, due to finite-dimensionality the problem reduces to inverting a matrix that corresponds to finitely many geodesics, and the inverse map is automatically Lipschitz continuous. However, this abstract argument does not give any information on which geodesics one should use for reconstruction, or any estimates for the stability constants. These questions will be addressed in the two-dimensional case in this article.

\subsection{Reconstruction}

In the Euclidean case, Radon \cite{radon20051} proved an inversion formula for the X-ray transform and Helgason \cite{helgason1990totally} proved it in the hyperbolic case. Pestov and Uhlmann established approximate reconstruction formulas on simple compact surfaces with strictly convex boundary in \cite{PU04} and Krishnan proved in \cite{krishnan2010inversion} that these formulas can be made exact in a neighborhood of metrics with constant curvature. These formulas has been implemented by Monard in \cite{monard2014numerical} and have been also generalized in negative curvature by Guillarmou and Monard in \cite{GM15}. 

In this paper, we give a positive answer to the reconstruction question for piecewise constant functions on compact nontrapping Riemannian two-dimensional manifolds with strictly convex boundary. For piecewise constant functions, the reconstruction method does not use heavy machinery.

\begin{thm}\label{main:noass}
    Let $(M,g)$ be a two-dimensional compact nontrapping Riemannian manifold with strictly convex boundary and $f : M\to\R$ be a piecewise constant function on a regular tiling. Then $f$ can be recovered from the knowledge of the metric, the tiling and $\ray$.
\end{thm}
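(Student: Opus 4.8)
The plan is to reduce the reconstruction to two independent tasks: (i) recovering, for every interior edge $e$ of the tiling separating tiles $T$ and $T'$, the jump $[f]_e:=f|_{T'}-f|_T$; and (ii) recovering the value of $f$ on a single tile. Since $M$ is connected and the tiling is finite, the adjacency graph of the tiles (vertices $=$ tiles, links $=$ shared edges) is connected, so once (i) and (ii) are done the value of $f$ on every tile is obtained by summing jumps along a path in that graph; consistency of the jumps around cycles is automatic, being inherited from the genuine function $f$. The nontrapping hypothesis guarantees that every geodesic used below is a complete geodesic of finite length between boundary points, so that $\ray$ is defined on it.

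Task (ii), together with the promised explicit near-boundary formulas, comes from strict convexity of $\bdry$. If a tile $T$ meets $\bdry$ along a nondegenerate arc and $q$ lies in the relative interior of that arc, then a small metric ball around $q$ meets $M$ only inside $\overline T$, and the geodesics entering $M$ near $q$ almost tangentially are short and stay inside that ball (the geodesic tangent to $\bdry$ at $q$ leaves $M$ locally, by strict convexity), hence stay inside $T$. For such a geodesic $\gamma$ one has $\ray(\gamma)=f|_T\,\operatorname{length}(\gamma)$, and since $\operatorname{length}(\gamma)$ is computed from the metric this gives $f|_T$ explicitly; once jumps are known the same idea reads off $f$ on successive near-boundary layers.

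For task (i), the heart of the matter, I would adapt the variation-through-geodesics mechanism of \cite{ILS}. Assume first that $e$ is not a geodesic. Choose $p_0$ in the relative interior of $e$ where $e$ has nonvanishing geodesic curvature, and let $\gamma_0$ be the maximal geodesic tangent to $e$ at $p_0$; then $\gamma_0$ and $e$ separate quadratically near $p_0$, so the arc of $\gamma_0$ near $p_0$ lies on one side of $e$, say in $\overline T$. Take a one-parameter variation $\{\gamma_s\}_{|s|<\delta}$ through geodesics (its variation field is then a Jacobi field) whose signed distance to $e$ near $p_0$ vanishes to first order like $\alpha s$ with $\alpha\neq 0$ pointing into $T'$. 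For $s\le 0$ the arc of $\gamma_s$ near $p_0$ stays in $T$, whereas for $s>0$ it cuts out of $T'$ a chord of length $C\sqrt{s}+o(\sqrt{s})$, with $C>0$ depending only on the metric, on the geodesic curvature of $e$ at $p_0$, on the crossing angle and on $\alpha$ — all known; the effect near $p_0$ is simply to transfer length $C\sqrt{s}$ from $T$ to $T'$ as $s$ crosses $0$. Since every other tile is entered by $\gamma_s$ through transversal crossings at interior points of edges, its contribution to $\ray(\gamma_s)$ is smooth in $s$; hence $s\mapsto\ray(\gamma_s)=h(s)+C\,[f]_e\,\sqrt{s}\,\mathbf{1}_{\{s>0\}}+o(\sqrt{s})$ with $h$ smooth and $\ray(\gamma_0)=h(0)$, so that
\[
[f]_e=\frac1C\,\lim_{s\to 0^+}\frac{\ray(\gamma_s)-\ray(\gamma_0)}{\sqrt{s}},
\]
which is extracted from the data.

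The main obstacle is making this rigorous in two respects. First, the genericity of $\gamma_0$ away from $p_0$ — transversality to all other edges at interior points, avoidance of vertices, absence of further tangencies — must be arranged, since otherwise additional singular terms at $s=0$ would pollute the limit; for non-geodesic edges this follows by varying $p_0$ over the relative interior of $e$ and discarding a negligible set of bad base points, using the regularity of the tiling. Second, and genuinely delicate, is the case of geodesic edges: no quadratic tangency is available, and the geodesic carrying $e$ is forced through the two vertices bounding $e$. Here I would use a degenerate variation for which $\ray(\gamma_s)$ develops a jump discontinuity at $s=0$ (rather than a square-root term), of size $[f]_e$ times the length of the relevant stretch, and then disentangle the contribution of $e$ from those of the two endpoint vertices. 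This is precisely the configuration that is streamlined — and shown to need only first-order (Jacobi field) information — under the additional simplicity hypothesis mentioned in the abstract, and it is the principal difficulty in the general statement.
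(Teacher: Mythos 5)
Your jump-propagation strategy is a genuinely different route from the paper's (which layer-strips along a strictly convex foliation and recovers corner values at vertices by reducing to a Euclidean conical-function problem and inverting a Vandermonde-type matrix), and the non-geodesic-edge analysis via the one-sided $\sqrt{s}$ singularity of $\ray(\gamma_s)$ is plausible. But as written the argument does not prove the theorem, because the case of geodesic edges is left genuinely open, and it is not a removable corner case: the theorem is claimed for arbitrary regular tilings, and the paper even singles out \emph{geodesic} tilings (every interior edge a geodesic segment) as a natural class. For a geodesic edge $e$ the base geodesic is forced --- it is the unique maximal geodesic containing $e$ --- so you lose all freedom to perturb away degeneracies. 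Concretely: (a) that geodesic may contain several collinear geodesic edges $e_1,\dots,e_k$, and the jump of $\ray(\gamma_s)$ at $s=0$ only yields the single linear combination $\sum_i [f]_{e_i}\,\ell(e_i)$, one equation for $k$ unknowns with no second variation available to separate them; (b) the same forced geodesic may be tangent to other (non-geodesic) edges or meet $\bdry$ tangentially at an endpoint, producing additional $\sqrt{|s|}$ and endpoint singularities that you cannot remove by moving the base point; (c) the ``disentangling'' of the vertex contributions is asserted, not performed. In a fully geodesic tiling your method therefore recovers essentially nothing beyond the boundary layer. The paper avoids this entirely because its base geodesics are tangent to the foliation level set $\Sigma$ at a vertex $p$, never to the edges; the several unknown values in the corner fan at $p$ are then separated by differentiating in the angular parameter $\theta$, not by crossing each edge tangentially.

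A secondary, repairable but currently hand-waved point: even for a non-geodesic edge you need the tangent geodesic $\gamma_0$ at $p_0$ to be transversal to all other edges, to avoid vertices, and to exit $\bdry$ non-tangentially, and on a general nontrapping (non-simple) manifold such geodesics can be long and self-intersecting. You assert that bad base points $p_0$ form a negligible subset of $e$; this needs an actual argument (e.g.\ that for each fixed vertex or edge the set of $p_0$ whose tangent geodesic hits it badly is discrete near a point of nonvanishing geodesic curvature), since a priori a whole subarc of $e$ could be bad. Your task (ii) and the connectivity/consistency bookkeeping are fine.
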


We give two proofs of this fact in this article. The first one is a direct adaptation of the injectivity proof of \cite{ILS}. To iterate the argument at level sets of a strictly convex foliation inside the manifold, the proof relies on the local reconstruction \cref{lem:reconstruction noass iteration step} which uses specific variations through geodesics to recover the unknown values. The second proof is an improvement of the first one if $M$ is assumed to be simple and if the tiling on which $f$ is piecewise constant has geodesic edges (see \cref{def:geodesic tiling}). This new proof relies on the improved local reconstruction \cref{lem:reconstruction simplegeod iteration step} which is more elegant since it uses less knowledge on the tiling and only the Jacobi fields of the previous variations through geodesics to recover the function.

We say that a Riemannian manifold is \emph{simple} if it is simply connected, there are no conjugate points and the boundary is strictly convex.

\subsection{Stability}

The first injectivity proofs by Mukhometov gave some stability estimates with a loss of a 1/2 derivative and the stability of the reconstruction has been studied on simple surfaces by Sharafutdinov in \cite{sharafutdinov1999ray}. A sharp $L^2\to H^{1/2}$ stability estimate has also been proved recently by Assylbekov and Stefanov in \cite{assylbekov2018sharp} under the same assumptions.

We study the stability of the previous reconstruction method. X-ray tomography being a linear inverse problem, a stability statement consists here in bounding by above a norm of $f$ by a norm of $\ray$. As we said previously, the theoretical argument for existence of a Lipschitz continuous inverse does not give any information on the Lipschitz constant.

In this paper, we give more elementary and we make explicit stability estimates for the reconstruction method used in the simple case of piecewise constant functions. Our method implies taking derivatives of restricted integrals, see \cref{sctn:corner} and \cref{subsctn:proof reconstruction noass}. As we will see, the lack of regularity of $f$ implies a lack of regularity of the ray transform and we cannot put a suitable norm on it to control these derivatives, see \cref{sctn:regularity}. We will therefore state stability estimates using norms of the restrictions of the integrals of $f$ along portions of geodesics considered in the reconstruction because these parts of integrals are smooth with respect to the parameters. These restricted integrals are not initially given in the problem, but they can be computed from the knowledge of the ray transform, the metric and the tiling.

\subsection{Discussions}

First, one could wonder at this point whether the assumption of \emph{a priori} knowledge of the tiling is reasonable or not. The tiling on which the function is piecewise constant could be known in practice: in numerical implementations, the tiling corresponds to the pixel grid used to represent $f$, see for instance \cite[Section 3]{monard2014numerical} and \cite[Section 2.2]{monard2019efficient}. Using the reconstruction technique of this article would lead to reconstruct a piecewise constant approximation of $f$ on this known grid.

Second, if the tiling is unknown, one could wonder whether recovering it is feasible or not. On simple manifolds, one may use the previously cited existing explicit reconstruction formulas for generic functions to avoid the need of the tiling but these techniques use heavier machinery than the reconstruction exposed here. To the best knowledge of the author, there is no results on recovering a tiling on which an unknown function is piecewise constant from X-ray data in general. One may expect that this would be possible, with elementary geometric methods, but this has no particular reason to be easy and it will be the subject of future work of the author. One may also use other type of measurements to recover the tiling if possible. In fact, in other inverse problems such as the recovering of a medium from far-field pattern, one may recover the values of a potential if it is piecewise constant on a known polyhedral cell geometry but also on a possibly unknown nested polyhedral geometry as shown in \cite{blaasten2017recovering}. 

\subsection{Outline}

The method for the reconstruction is the same as for the injectivity in \cite{ILS}. We first solve the Euclidean case in \cref{sctn:corner} and we show that near any point of $M$ the problem at a corner can be reduced to an analogue in the Euclidean plane. Then, we reconstruct $f$ near $\bdry$ in \cref{sctn:boundary} with explicit formulas. In \cref{sctn:iteration}, we state a local reconstruction lemma around a point and iterate it in all the manifold thanks to the strictly convex foliation. Finally, we improve the local reconstruction lemma in \cref{sctn:simplegeod} under the assumptions that the manifold is simple and the tiling is geodesic. The stability of the reconstruction method is studied in \cref{sctn:stability}.
    \section{Preliminaries}
        In this section, let $(M,g)$ be a smooth Riemannian $n$-manifold with or without boundary. 
We define piecewise constant functions and foliations as in \cite{ILS}. 


\subsection{Regular tilings and piecewise constant functions}

\begin{defin}[Regular simplex]
    We call regular $n$-simplex on $M$ the image of a $C^\infty$-embedding of the standard $n$-simplex from $\R^{n+1}$ to $M$.
\end{defin}

\begin{defin}[Depth of a point in a regular simplex]
    We define the depth of any point in a regular simplex by induction: interior points have depth $0$, the interiors of the regular $(n-1)$-simplices making up the boundary have depth $1$, the interiors of their boundary simplices of dimension $n-2$ have depth $2$ and so on. Finally, the $n+1$ corner points have depth $n$.
\end{defin}

\begin{defin}[Regular tiling]\label{def:regular tiling}
    A regular tiling of $M$ is a collection of regular $n$-simplices $(\Delta_i)_{i\in I}$ such that,
    \begin{enumerate}[label=(\roman*)]
        \item the collection is locally finite: for any compact subset $K\subset M$ the set $\{i\in I, \Delta_i\cap K\ne \varnothing\}$ is finite,
        \item $M=\bigcup_{i\in I}\Delta_i$,
        \item $\inter{\Delta_i}\cap\inter{\Delta_j}=\varnothing$ when $i\ne j$ and
        \item If $x\in\Delta_i\cap\Delta_j$, then $x$ has the same depth in both $\Delta_i$ and $\Delta_j$.
    \end{enumerate}
\end{defin}

The tiles of a regular tiling have boundary simplices which have boundary simplices and so on. In our two-dimensional study, a regular tiling of $M$ will thus be a collection of triangles which can intersect each other only at common vertices or all along a common edge. Since the collection is locally finite, if $M$ is compact, a regular tiling has a finite number of tiles.

\begin{defin}[Piecewise constant function]\label{def:piecewise constant function}
    We say that a function $f:M\to\R$ is piecewise constant if there exist a tiling $(\Delta_i)_{i\in I}$ such that $f$ is constant on the interior of each regular $n$-simplex $\Delta_i$ and vanishes on $\bigcup_{i\in I}\partial\Delta_i$.
\end{defin}

\subsection{Tangent data}


The reconstruction method relies on the reduction of the problem near a point to the reconstruction of a conical function (see \cref{subsctn:conical functions}) in the Euclidean plane. To do so, we define here analogues in the tangent plane of tiles and functions which we will call tangent cones and tangent functions.

\begin{defin}[Tangent cones of a regular simplex]
    Consider a regular $m$-simplex $\Delta$ in $M$ with $0\leq m\leq \dim(M)$. Let $p\in \Delta$ and $\mathcal{C}$ be the set of all $C^\infty$-curves starting at $p$ and staying in $\Delta$. The tangent cone of $\Delta$ at $p$, denoted by $C_p\Delta$, is the set
    \begin{equation*}
        \{\dot\gamma(0),\gamma\in \mathcal{C}\}\subset T_pM.
    \end{equation*}
\end{defin}

\begin{defin}[Tangent function]
    Let $f:M\to\R$ be a piecewise constant function and $p\in M$. Let also $\Delta_1,...,\Delta_N$ be the tiles that contains $p$ and $a_1,...\,,\,a_N$ the respective values of $f$ on those tiles. The tangent function of $f$ at $p$ is the function $T_pf:T_pM\to\R$ defined by
    \begin{equation*}
        T_pf(u)=
        \begin{cases}
        a_i & \text{ if } u\in\inter{C_p\Delta_i}\\
        0 & \text{ if } u\not\in\bigcup_{i=1}^N C_p\Delta_i
        \end{cases}
        .
    \end{equation*}
\end{defin}

\subsection{Parametrization of unit tangent vectors}
At a point $p\in M$, if $(\omega,\nu)$ is an orthonormal basis of $T_pM$, we parametrize a unit tangent vector $w$ at $p$ by its angle $\theta\in(-\pi,\pi]$ in polar coordinates in $T_pM$ and denote $w_\theta$ the unit tangent vector at $p$ defined by the angle $\theta$.

\begin{rk}
	Note also that for convenience we will systematically omit to write an index $\theta$ when $\theta=0$. Therefore $w$ will denote $w_0$, $\gamma$ will denote $\gamma_0$ and so on.
\end{rk}

\subsection{Foliations}\label{subsctn:foliation}

As shown in \cite{BGL02}, if $M$ is a two-dimensional compact nontrapping Riemannian manifold with strictly convex boundary there exists a strictly convex function $\varphi : M\to \R$. Roughly, this strictly convex function is constructed by letting the strictly convex boundary evolve under the mean curvature flow, so in particular the boundary is a level set. 

The existence of a strictly convex function implies that $M$ is foliated by the level sets $\{\phi=c\}$ for $\min\varphi< c\leq\max\varphi$ which are strictly convex hypersurfaces for the normal pointing toward the set $\{\phi\leq c\}$. Moreover, a geodesic in a manifold with strictly convex foliation $\phi$ is tangential to a level set of $\phi$ at at most one point (see the proof of \cref{lem:geodesics in foliated manifolds} and figure \ref{fig:geodesics in foliated manifolds}). There is such a point
if and only if the geodesic does not go through the minimum of $\phi$. For more details on strictly convex functions and foliations, see \cite{PSUZ16, UD94}.

\begin{figure}[h!]
\begin{tikzpicture}[scale=0.4]

	\draw (0,0)ellipse(6 and 5);
	
	\draw [dashed,scale=0.7] (0,0)ellipse(6 and 5);
	\draw [dashed,scale=0.45] (0,0)ellipse(6 and 5);
	\draw [dashed,scale=0.2] (0,0)ellipse(6 and 5);
	
	\coordinate (A) at (-5.25,-2.42);
	\coordinate (B) at (2.05,-4.7);
	\coordinate (P) at (-1.03,-2.08);
	
	\draw (A) node[left] {$\gamma$};
	\draw (4.42,3.18) node[left] {$M$};
	\draw (2.7,0) node[scale=0.9] {$\{\phi=c\}$};
		
	\draw (A) .. controls +(38:1) and +(161.07:1.8) .. (P) .. controls +(341.07:1.8) and +(115:1) .. (B);
	
\end{tikzpicture}
	\caption{There is only one point such that the geodesic $\gamma$ in a manifold $M$ is tangential to a level set of the strictly convex function $\phi$.}
	\label{fig:geodesics in foliated manifolds}
\end{figure}
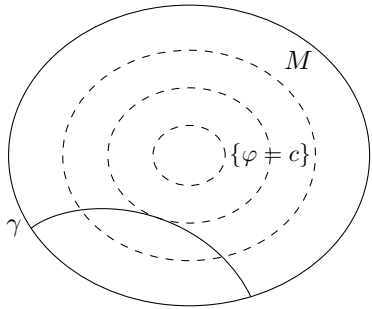

We introduce here some terminology for the different types of tile containing a point on a level set of the foliation which will be useful all along the article. Denote $\Sigma=\{\phi=c\}$, let $p\in\Sigma$ and denote $\nu$ the normal to $\Sigma$ at $p$ pointing toward $\{\phi<c\}$. Denote $H_\pm=\{u\in T_pM,\,\pm \,\inner{\nu}{u} >0\}$ and $H_0=T_p\Sigma$. Near $p$ there are three mutually exclusive types of tiles defined in the proof of \cite[Lemma~5.1]{ILS},
    \begin{enumerate}
        \item simplices $\Delta$ such that $C_p\Delta\cap H_-\ne\varnothing$,
        \item simplices $\Delta$ such that $C_p\Delta\subset H_+\cup H_0$ and $C_p\Delta\cap H_0\ne\{0\}$ which we call \emph{tangent tiles}, and
        \item simplices $\Delta$ such that $C_p\Delta\subset H_+\cup\{0\}$ which we call \emph{corner tiles}.
    \end{enumerate}

\subsection{Curvature and jerk of the boundary}

Let $p\in \bdry$. Let $\nu$ the inward pointing normal to the boundary at $p$ and $\omega \in T_p\bdry$ a unit vector so that $(\omega,\nu)$ is an orthonormal basis of $T_pM$. Denote $U$ a normal neighborhood of $p$ associated to this basis.

\begin{lem}
    \label{lem:parametrization boundary}
    There exist a smooth function $h : \R \to \R$ so that in normal coordinates in $U$,
    \begin{equation*}
        (x,y)\in\bdry \Leftrightarrow y=h(x)
    \end{equation*}
    and 
    \begin{equation*}
        h(x)=\frac{\kappa}{2} x^2 + \frac{j}{6} x^3 + \underset{x\to 0}{o}(x^3)
    \end{equation*}
    where $\kappa>0$ is the curvature and $j:=h^{(3)}(0)$ is called the \emph{jerk} of $\bdry$ at $p$ for $\omega$.
\end{lem}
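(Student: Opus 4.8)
The plan is to work in normal coordinates on $U$ centered at $p$, with the basis $(\omega,\nu)$ chosen so that the $x$-axis is tangent to $\bdry$ and the $y$-axis points inward. Since the boundary is a smooth embedded curve passing through the origin with tangent direction $\omega$, the implicit function theorem gives a smooth function $h:\R\to\R$ (defined near $0$) with $h(0)=0$ and $h'(0)=0$ such that, locally, $(x,y)\in\bdry$ if and only if $y=h(x)$. Extending $h$ to all of $\R$ by any smooth bump-function argument is harmless since only the germ at $0$ matters. Then $h(x)=\tfrac{h''(0)}{2}x^2+\tfrac{h^{(3)}(0)}{6}x^3+o(x^3)$ is just the Taylor expansion, so the content is to identify $h''(0)$ with the geodesic curvature $\kappa$ of $\bdry$ at $p$ and to observe $h^{(3)}(0)$ is a well-defined third-order invariant, which we baptize the jerk $j$.

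The key computation is the curvature identification. I would parametrize $\bdry$ near $p$ by $x\mapsto(x,h(x))$ and compute its geodesic curvature using the formula in normal coordinates. In normal coordinates the Christoffel symbols vanish at the origin, so at $p$ the geodesic curvature of a curve coincides with its Euclidean curvature; for a graph $y=h(x)$ this is $h''(x)(1+h'(x)^2)^{-3/2}$, which at $x=0$ equals $h''(0)$ because $h'(0)=0$. Hence $h''(0)=\kappa$, the geodesic curvature of $\bdry$ at $p$, which is positive by the strict convexity hypothesis (with the sign convention fixed by $\nu$ pointing inward). One should note that since the second fundamental form is positive definite, $\kappa>0$ regardless of the choice of unit $\omega\in T_p\bdry$; only the sign of the cubic coefficient $j$ depends on the orientation of $\omega$, which is why the jerk is stated as depending on $\omega$.

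The main (mild) obstacle is purely bookkeeping: being careful that the normal-coordinate metric contributes nothing at second order to the curvature at the base point. Concretely, one uses that in geodesic normal coordinates $g_{ij}(0)=\delta_{ij}$ and $\partial_k g_{ij}(0)=0$, so all first-order metric corrections to the connection vanish at $p$; the curvature of $\bdry$ at $p$ is therefore the naive graph curvature. The third-order term $j=h^{(3)}(0)$ needs no further identification — it is simply defined by the expansion — but one may remark that it is an invariant of the pair $(\bdry\subset M,\, p,\,\omega)$ since $h$ is uniquely determined by this data near $0$. This finishes the proof; everything beyond the curvature identification is elementary Taylor expansion.
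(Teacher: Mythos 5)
Your proposal is correct and follows essentially the same route as the paper: write $\bdry$ locally as a graph $y=h(x)$ in the normal coordinates, Taylor-expand, and identify $h''(0)$ with $\kappa$ using that the Christoffel symbols vanish at the center of normal coordinates. The only cosmetic difference is that the paper performs the curvature identification by tracking a unit-speed boundary geodesic and its covariant acceleration $D_t\dot\gamma(0)=\kappa\nu$, whereas you invoke the Euclidean graph-curvature formula at $x=0$; these are the same computation.
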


\begin{rk}
 $ $
	\begin{enumerate}
		\item The jerk of $\bdry$ at $p$ for $\omega$ is uniquely determined and the only non canonical choice is the unit tangent vector $\omega$ which changes the sign of $j$.
		\item We named it \emph{jerk} after the derivative of the acceleration in physics. It describes the variation of curvature of $\bdry$ at $p$. 
	\end{enumerate}
\end{rk}

\begin{proof}
See \cref{subsctn:appendix jerk}.
\end{proof}
      \section{Reconstruction at a corner}\label{sctn:corner}
          From now on, let $(M,g)$ be a two-dimensional compact nontrapping Riemannian manifold with strictly convex boundary and $T$ be a fixed regular tiling of $M$.

In this section, we reconstruct $f$ in a corner. First, we define the corner and the variations used in the reconstruction (see figures \ref{fig:corner reconstruction} and \ref{fig:corner tangent data} for the whole picture). We then state our result in \cref{subsctn:corner result}. To prove it, we first prove a reconstruction result for conical functions in the Euclidean plane. Then, we generalize \cite[Lemma~4.2]{ILS} which is the key lemma to reconstruct $f$ at a corner since it reduces the Riemannian case to the reconstruction of conical functions in the Euclidean plane.

\subsection{Definition of the corner}\label{subsctn:corner def}

Let $\Sigma\subset M$ be a strictly convex hypersurface and $p\in\Sigma$ be a vertex of the tiling. Extend $M$ in $\tild{M}$ by extending the metric around $p$ if necessary so that we can always assume $p\in\inter{\tild{M}}$.

Denote $\nu$ the inward pointing normal to $\Sigma$ at $p$ and take $\omega\in T_pM$ so that $(\omega,\nu)$ is an orthonormal basis of $T_pM$. Here we call inward pointing the normal at $p$ making the scalar second fundamental form of $\Sigma$ strictly positive. 

Denote $\mathcal{C}$ the set of corner tiles $\Delta$ at $p$ and $S$ the sector in $T_pM$ corresponding to $\mathcal{C}$ defined by $S:=\bigcup_{\Delta\in \mathcal{C}} C_p\Delta$.

As in \cite{ILS}, we define the neighborhood in which the reduction to the Euclidean case is valid. Let $r > 0$ be such that $\Sigma$ cuts the ball $B(p,r)$ into two parts and call $V$ the open part for which the part of its boundary coinciding with $\Sigma$ is strictly convex. Reduce $r$ if necessary so that each tile having $p$ for vertex cuts the ball into three parts. Denote finally the corner $C:=\mathcal{C}\cap B(p,r)$.

\subsection{Definition of the variation}\label{subsctn:corner variation}

Take $\theta$ near $0$. Let $\gamma_\theta$ be the geodesic starting from $p$ with initial velocity $w_\theta$ denote $\ell_\theta$ the line in $T_pM$ starting from distance $1$ in the direction $w_{\theta+\pi/2}$ from $p$ with initial velocity $w_\theta$.

Denote $\Gamma : (-\eps_0,\eps_0)\times[a,b] \to M$ a variation through geodesics starting from $\gamma_\theta$, denote $J$ its Jacobi field and $\gamma^\eps_\theta=\Gamma(\eps,\cdot)$. Define the integrals $I_C(\theta,\eps):=\int_{\gamma_\theta^\eps\cap\,C}{f\,ds}$ for $(\theta,\eps)$ in a neighborhood of $(0,0)$.

\begin{figure}[!h]

\begin{tikzpicture}[scale=0.8]

	\coordinate (P) at (0,0);
	\coordinate (A) at (-4,-8);
	\coordinate (B) at (9,2);
	
	\draw  [->,>= stealth] (P) -- ++(0,2);
	\draw  (0,2) node[above] {$\nu$};
	\draw  [->,>= stealth] (P) -- ++(2,0);
	\draw  (2,0) node[below] {$\omega$};
	\draw  [scale=1.98,rotate=20,->,>= stealth] (P) -- ++(1,0.18) node[above,right,shift={(0,0.2)}] {$w_\theta$};
	\draw  [scale=1.98,rotate=20,->,>= stealth] (P) -- ++(-0.18,1) node[above,left,shift={(0.2,0.2)}] {$w_{\theta+\pi/2}$};
	\draw [->,>= stealth] (1.5,0) arc (0:30.2:1.5);
	\draw (20:1.5) node[right]{$\theta$};
	\draw (0,0) node {$\bullet$};	
	\draw (0,0) node[below] {$p$};	
	
	\draw [very thick,domain=-8:8] plot (\x,{0.06*(\x)^2});
	\draw  (-8,4) node[right] {$\Sigma$};
	
	\draw [thick] (P) -- ++(40:6) node[right] {$e_1$};
	\draw [thick] (P) -- ++(70:6) node[right] {$e_2$};
	\draw [thick] (P) -- ++(110:6) node[left] {$e_3$};
	\draw [thick] (P) -- ++(150:6) node[left] {$e_4$};
	\draw [thick] (P) -- ++(200:4);
	\draw [thick] (P) -- ++(300:2);
	
	\fill[gray, opacity=0.3] (P) -- (40:6) arc (40:150:6) -- cycle;
	\draw (0,5) node[below] {$C$};
	
	\draw [domain=-1.5:8,rotate=20] plot (\x,{-0.03*(\x-3)^2+0.28});
	\draw  (8,2.8) node[below] {$\gamma_\theta$};
	\draw [dashed, domain=-2.6:9,rotate=20, shift={(-0.18,1)}] plot (\x,{-0.03*(\x-3)^2+0.28});
	\draw  (8,3.5) node[right] {$\gamma_\theta^\eps$};
\end{tikzpicture}
\caption{The corner and the variation defined in \cref{subsctn:corner def} and \cref{subsctn:corner variation}.}
\label{fig:corner reconstruction}
\end{figure}

\begin{figure}[!h]

\begin{tikzpicture}
	\clip (-3,-1) rectangle (4,3);
	\coordinate (P) at (0,0);
	\draw (-2.2,2) node {$T_pM$};
	\draw [domain=-2.5:3.8] plot (\x,{1.064+0.364*\x}) node[below] {$\ell_\theta$};

	\draw  [->,>= stealth] (P) -- ++(0,1) node[above] {$\nu$};
	\draw  [->,>= stealth] (P) -- ++(1,0) node[below] {$\omega$};	
	\draw  [rotate=20,->,>= stealth] (P) -- ++(1,0) node[above,right] {$w_\theta$};
	\draw  [rotate=20,->,>= stealth] (P) -- ++(0,1) node[left, shift={(0.1,0.2)}] {$w_{\theta+\pi/2}$};
	\draw [->,>= stealth] (0.5,0) arc (0:20:0.5);
	\draw (10:0.6) node[scale=0.7,right]{$\theta$};
	\draw (0,0) node {$\bullet$};	
	\draw (0,0) node[below] {$p$};
	
	\draw [thick] (P) -- ++(40:5.355);
	\draw [thick] (P) -- ++(70:3.66);
	\draw [thick] (P) -- ++(120:3.99);
	\draw [thick] (P) -- ++(150:6.9);
	
	\fill[gray,dotted, opacity=0.3] (P) -- (4.12,3.46) -- (-5.99,3.46) -- cycle;
	\draw (0,2) node {$S$};
\end{tikzpicture}
\caption{Tangent data of \cref{subsctn:corner def} and \cref{subsctn:corner variation}.}
\label{fig:corner tangent data}
\end{figure}

\subsection{Reconstruction result}
\label{subsctn:corner result}
We may now state the main result of this section. 
\begin{lem}
	\label{lem:THE reconstruction in a corner}
	Take the notations and the framework introduced in \cref{subsctn:corner def} and \cref{subsctn:corner variation}. Suppose moreover that $J(0)=w_{\theta+\pi/2}$. Then we can reconstruct $f$ in the corner $C$ from the knowledge of the metric, the tiling and $I_C(\theta,\eps)$ for $(\theta,\eps)$ near $(0,0)$.
\end{lem}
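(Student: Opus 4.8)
The plan is to reduce the corner reconstruction in the Riemannian setting to a purely Euclidean problem about conical functions, following the strategy of \cite[Lemma~4.2]{ILS} but carrying along the extra Jacobi-field normalization $J(0)=w_{\theta+\pi/2}$. First I would set up normal coordinates at $p$ adapted to the basis $(\omega,\nu)$, so that geodesics through $p$ become straight rays and the tangent cones $C_p\Delta$ of the corner tiles literally become the Euclidean sectors bounded by the directions $\dot e_i$; the sector $S$ is then a genuine cone in $\R^2$ and the tangent function $T_pf$ is a conical function. The key geometric input is that, after rescaling by $\eps$, the family of geodesics $\gamma^\eps_\theta$ together with the condition $J(0)=w_{\theta+\pi/2}$ forces the curves $\gamma^\eps_\theta$ to converge, in the $C^1$ sense and at the right rate, to the Euclidean lines $\ell_\theta$ as $\eps\to 0$; this is exactly where the hypothesis on $J$ is used, since $\Gamma(\eps,a)\approx p+\eps\,J(0)=p+\eps\,w_{\theta+\pi/2}$ to first order, matching the base point of $\ell_\theta$.

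Next I would prove the Euclidean model statement: if $h:\R^2\to\R$ is a conical function (piecewise constant on finitely many sectors emanating from the origin, vanishing on the rays), then from the integrals of $h$ over the lines $\ell_\theta$ (a one-parameter family, at unit distance from the origin with direction $w_\theta$) one can recover all the sector values, provided the lines actually sweep across the cone $S$ as $\theta$ varies near $0$. Concretely, the integral $\int_{\ell_\theta} h\,ds$ is a piecewise-linear-combination of the sector constants with coefficients given by chord lengths, and these chord lengths are explicit $C^1$ functions of $\theta$ that are nonsmooth precisely when $\ell_\theta$ crosses a ray $\dot e_i$; taking one-sided derivatives in $\theta$ at those crossing angles isolates the jump $a_i-a_{i+1}$ across each edge, and one boundary sector value (for instance the one adjacent to the region outside $S$, where $h=0$) serves as an anchor. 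Walking across the edges of the cone then determines every $a_i$.

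Finally I would glue the two pieces. From $I_C(\theta,\eps)=\int_{\gamma^\eps_\theta\cap C} f\,ds$ one divides by $\eps$ and lets $\eps\to 0$: using the $C^1$ convergence of $\eps^{-1}(\gamma^\eps_\theta-p)$ to $\ell_\theta$, the smoothness of the metric, and the fact that $f$ near $p$ agrees with $T_pf$ on the sectors (the tiles are $C^\infty$-embedded simplices, so their tangent cones approximate them to first order), one gets $\lim_{\eps\to0}\eps^{-1}I_C(\theta,\eps)=\int_{\ell_\theta}T_pf\,ds$, the Euclidean line integral of the conical function. Then the Euclidean reconstruction step recovers the values $a_i$ of $T_pf$ on each sector $C_p\Delta_i$, which are exactly the values of $f$ on the corner tiles $\Delta_i$, hence on all of $C$. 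The main obstacle I anticipate is the careful analysis of the limit $\eps^{-1}I_C(\theta,\eps)\to\int_{\ell_\theta}T_pf\,ds$ uniformly enough in $\theta$ that the subsequent $\theta$-derivatives survive the limit — one needs the convergence of the rescaled geodesics to the lines $\ell_\theta$ to be $C^1$ jointly in $(\theta,\eps)$ near $(0,0)$, so that differentiating in $\theta$ and taking $\eps\to0$ commute; controlling this, and checking that the crossing angles where $\ell_\theta$ meets the rays $\dot e_i$ are nondegenerate (transversal) so the one-sided $\theta$-derivatives genuinely detect the jumps, is the technical heart of the argument.
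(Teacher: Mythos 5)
Your first step --- the reduction $\lim_{\eps\to0}\eps^{-1}I_C(\theta,\eps)=\int_{\ell_\theta\cap S}T_pf\,ds$ via normal coordinates and the normalization $J(0)=w_{\theta+\pi/2}$ --- is exactly the route the paper takes (its \cref{lem:corner reconstruction}), and your identification of where the Jacobi-field hypothesis enters is correct. Note, however, that the uniformity-in-$\theta$ worry you raise at the end is not actually needed: one takes the limit in $\eps$ separately for each $\theta$, obtains the function $\theta\mapsto\int_{\ell_\theta\cap S}T_pf\,ds$ pointwise from the data, and only then differentiates this (explicitly smooth) function of $\theta$; there is no interchange of limits to justify.

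The genuine gap is in your Euclidean step. You propose to recover the sector values by locating angles at which $\int_{\ell_\theta}h\,ds$ is nonsmooth (``when $\ell_\theta$ crosses a ray $\dot e_i$'') and reading off the jumps $a_i-a_{i+1}$ from one-sided derivatives there. But in this geometry no such angles exist in the range where you have data: the cone $S$ is contained in $H_+\cup\{0\}$, so every ray $e_i$ points strictly into the upper half-plane, and the line $\ell_\theta$ (at distance $1$ from the vertex, with direction $w_\theta$ for $\theta$ near $0$) meets \emph{every} ray of $S$ transversally for \emph{every} $\theta$ near $0$. The chord lengths, hence $If(\theta)$, are smooth (indeed real-analytic) near $\theta=0$; the configuration of which sectors the line crosses never changes, so there are no kinks to detect, and a line would only cease to meet the ray $e_i$ when $\theta$ reaches the fixed angle $\theta_i>0$ of that ray --- outside the neighborhood of $0$ in which $I_C(\theta,\eps)$ is given. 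Your ``anchor and walk across the edges'' scheme therefore cannot get started. The correct mechanism (and the paper's, \cref{lem:conical functions}) is different in kind: with $z_i^t=\alpha_i/(1-\alpha_i t)$ one has $F(t):=\cos^2(\arctan t)\,If(\arctan t)=\sum_i a_i(z_i^t-z_{i+1}^t)$, and the value together with the first $N-1$ derivatives of $F$ at $t=0$ yield the linear system $A(a_1,\dots,a_N)^T=(F(0),\dots,F^{(N-1)}(0))^T$ with the explicitly invertible Vandermonde-type matrix $A$ of \eqref{eqn:def of A}. All $N$ values are recovered simultaneously from the $N$-jet of the smooth function $If$ at $\theta=0$, not one at a time from singularities of $If$.
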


This will be proven in \cref{subsctn:corner reconstruction}.

\begin{rk}
	If $f$ is supported in the corner $C$ then $I_C(\theta,\eps) = \ray(\gamma_\theta^\eps)$. Then the proof of the previous lemma and especially \eqref{eqn:values equal matrix derivatives} show that the reconstruction of $f$ uses the value of $\partial_\eps\ray(\gamma_\theta^\eps)_{|\eps=0}$ and its $N-1$ first partial derivatives in $\theta$ at $0$.
\end{rk}

\subsection{Conical functions in the Euclidean plane}\label{subsctn:conical functions}

The injectivity result \cite[Lemma~3.1]{ILS} can be adapted easily into a reconstruction result.

Let us denote the upper half plane by $H_+=\{(x,y)\in\R^2,y>0\}$. Let $\alpha_1>...>\alpha_N>\alpha_{N+1}$ and $a_1,...,a_N$ be any real numbers. Consider the conical function $f:H_+\to\R$ defined by 

\begin{equation}\label{eqn:def conical function}
    f(x,y)=
    \begin{cases}
        a_1, & \alpha_1y>x>\alpha_2y\\
        \vdots\\
        a_N, & \alpha_Ny>x>\alpha_{N+1}y\\
        0 & \text{ otherwise.}
    \end{cases}
\end{equation}
%
%
%
%
%
%
%

For $\theta$ near $0$, let $\ell_\theta$ be the line defined by $y=\frac{1}{\cos(\theta)}+\tan(\theta) x$ and denote $If(\theta)=\int_{\ell_\theta}{f\,ds}$. The line $\ell_\theta$ is exactly the same line $\ell_\theta$ defined previously in \cref{subsctn:corner variation}.

\begin{lem}\label{lem:conical functions}
    Let $f$ be defined as above. Then one can reconstruct the values $a_i$ given $If(\theta)$ for all $\theta$ near $0$. In fact, only the knowledge of the value and the $N-1$ first derivatives of $If$ at $0$ is sufficient.
\end{lem}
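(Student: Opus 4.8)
The plan is to compute $If(\theta)$ explicitly as a function of $\theta$ near $0$ and show that its Taylor expansion at $0$ determines the $a_i$ via a triangular linear system. First I would parametrize the line $\ell_\theta$: it passes through the point $(0,1/\cos\theta)$ in the direction $w_\theta = (\cos\theta,\sin\theta)$, so a point on it is $(t\cos\theta,\, 1/\cos\theta + t\sin\theta)$ for $t\in\R$, and $ds = dt$ since $w_\theta$ is a unit vector. The line enters and exits each angular sector $\{\alpha_{i+1} y < x < \alpha_i y\}$ at well-defined values of $t$, because for $\theta$ small $\ell_\theta$ is close to the horizontal line $y=1$ which crosses every ray $x = \alpha_i y$ transversally at the point $(\alpha_i,1)$. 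So $If(\theta) = \sum_{i=1}^N a_i\,L_i(\theta)$ where $L_i(\theta)$ is the length of the chord of $\ell_\theta$ inside the $i$-th sector, a smooth function of $\theta$ near $0$.

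Next I would make the chord lengths $L_i(\theta)$ explicit. The ray $x=\alpha y$ meets $\ell_\theta$ where $t\cos\theta$ and $1/\cos\theta + t\sin\theta$ satisfy $t\cos\theta = \alpha(1/\cos\theta + t\sin\theta)$, i.e. $t(\cos\theta - \alpha\sin\theta) = \alpha/\cos\theta$, giving $t_\alpha(\theta) = \dfrac{\alpha}{\cos\theta(\cos\theta-\alpha\sin\theta)}$. Hence $L_i(\theta) = t_{\alpha_i}(\theta) - t_{\alpha_{i+1}}(\theta) = \dfrac{1}{\cos\theta}\left(\dfrac{\alpha_i}{\cos\theta-\alpha_i\sin\theta} - \dfrac{\alpha_{i+1}}{\cos\theta-\alpha_{i+1}\sin\theta}\right)$. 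Thus $If(\theta) = \dfrac{1}{\cos\theta}\sum_{i=1}^N \left(a_i - a_{i-1}\right)\dfrac{\alpha_i}{\cos\theta - \alpha_i\sin\theta}$ after an Abel summation, with the convention $a_0 = a_{N+1} = 0$; more simply, $\cos\theta\cdot If(\theta) = \sum_{i=1}^N a_i\,g_{\alpha_i,\alpha_{i+1}}(\theta)$ for explicit rational functions $g$ of $(\cos\theta,\sin\theta)$. What matters is the behaviour of the functions $\theta\mapsto \alpha/(\cos\theta - \alpha\sin\theta)$: writing $u = \tan\theta$, this is $\alpha/(1-\alpha u)$ up to the $\cos\theta$ factor, which has the power series $\alpha(1 + \alpha u + \alpha^2 u^2 + \cdots)$.

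The key algebraic point is then that the matrix obtained from the first $N$ Taylor coefficients (in $u = \tan\theta$, which is an analytic change of variable fixing $0$) of the functions $\{\alpha_i/(1-\alpha_i u)\}_{i=1}^N$ is, up to the bijective correspondence between the $\alpha_i$ and the slope-differences, governed by a Vandermonde-type determinant in the distinct numbers $\alpha_i$. Concretely, after clearing the common factor $1/\cos\theta$ and expanding in powers of $u$, the coefficient of $u^k$ in $\cos\theta\cdot If(\theta)$ is a fixed linear combination $\sum_i c_{ik} a_i$ where $c_{ik}$ involves $\alpha_i^{k+1}$ and $\alpha_{i+1}^{k+1}$; since the $\alpha_i$ are pairwise distinct, the $N\times N$ matrix $(c_{ik})_{1\le i\le N,\,0\le k\le N-1}$ is invertible (reducing to a product of a triangular matrix from the Abel-summation rewriting and a genuine Vandermonde matrix in the $\alpha_i$). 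Therefore the knowledge of $If(0), (If)'(0), \ldots, (If)^{(N-1)}(0)$ — equivalently of the first $N$ Taylor coefficients of $\cos\theta\cdot If(\theta)$ in $u$ — determines $(a_1,\ldots,a_N)$ by inverting this matrix, which is exactly the claim.

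I expect the main obstacle to be the bookkeeping in the invertibility argument: one must be careful that after the Abel summation the relevant unknowns are the jumps $a_i - a_{i-1}$ paired with single rays $x=\alpha_i y$, and then check that the resulting coefficient matrix really does reduce to a Vandermonde determinant in the $\alpha_i$ (so nonzero) rather than something that could degenerate; handling the $1/\cos\theta$ prefactor and the change of variable $\theta\leftrightarrow u=\tan\theta$ cleanly is the other place where care is needed, though both are routine once set up. Alternatively, one can avoid explicit matrices entirely by arguing that if $If$ vanishes to order $N-1$ at $0$ then, reading off successive derivatives, all jumps vanish and hence all $a_i = 0$; this is the contrapositive reformulation that mirrors the injectivity proof of \cite[Lemma~3.1]{ILS}, and it is probably the shortest route to write up.
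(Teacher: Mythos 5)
Your proposal is correct and follows essentially the same route as the paper: compute the intersection times of $\ell_\theta$ with the rays $x=\alpha_i y$, pass to the variable $u=\tan\theta$ so that each chord endpoint contributes $\alpha_i/(1-\alpha_i u)$ (the paper's $z_i^t$ after normalizing by $\cos^2\theta$), read off the Taylor coefficients to get the linear system with matrix $(\alpha_i^{k}-\alpha_{i+1}^{k})$, and invert it by the Vandermonde/difference-matrix factorization of \cite[Lemma~3.1]{ILS}. The only blemishes are cosmetic: your Abel-summed sum should run to $i=N+1$ (consistent with your convention $a_{N+1}=0$), and the natural normalizing prefactor is $1/\cos^2\theta$ rather than $1/\cos\theta$, neither of which affects the argument.
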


\begin{proof}
    The proof is almost the same as in \cite[Lemma~3.1]{ILS}. In our case, denoting also for $t$ near 0, $z_i^t=\frac{\alpha_i}{1-\alpha_it}$ we have for $\theta$ near 0 the following formula
    \begin{equation}
        If(\theta)=\frac{1}{\cos^2(\theta)}
        \sum_{i=1}^N a_i(z_i^{\tan(\theta)}-z_{i+1}^{\tan(\theta)}).
    \end{equation}
    Define then
    \begin{equation}\label{eqn:def of F}
        F(t)=\cos^2(\arctan(t))If(\arctan(t)),
    \end{equation}
    so that
    \begin{equation}
        F(t)=(z_1^t-z_2^t)a_1+...+(z_{N}^t-z_{N+1}^t)a_N.
    \end{equation}
    Taking derivatives with respect to $t$ at $0$, we may then write a similar equation of \cite[Eq.~(8)]{ILS} but with a general right hand side
    \begin{equation}\label{eqn:euclidean case with rhs}
        A
        \left ( \begin{array}{c}
             a_1 \\
             \vdots \\
             a_N
        \end{array}\right)
        = b
    \end{equation}
    where $b$ is a vector whose coordinates are $F$ and its derivatives with respect to $t$ at $0$
    \begin{equation}
        b= 
        \left ( \begin{array}{c}
             F(0) \\
             \vdots \\
             F^{(N-1)}(0)
        \end{array}\right)
    \end{equation}
    and $A$ is given by \cite[Eq.~(9)]{ILS} that we recall here
    \begin{equation}\label{eqn:def of A}
        A=
        \left (
        \begin{array}{ccc}
             \alpha_1-\alpha_2 & ... & \alpha_N - \alpha_{N+1}  \\
             \vdots & & \vdots \\
             \alpha_1^N-\alpha_2^N & ... & \alpha_N^N -\alpha_{N+1}^N
        \end{array}
        \right ).
    \end{equation}
    
    The proof of \cite[Lemma~3.1]{ILS} that $A$ is invertible actually gives us also that its inverse can be explicitly determined from its coefficients (the inversion of $A$ is only based on the inversion of a Vandermonde matrix which can be computed explicitly, see \cite{KNU97} for instance). The computation of $A^{-1}$ and \eqref{eqn:euclidean case with rhs} allow us then to conclude.
\end{proof}


\subsection{Reduction to the Euclidean case}

The reduction to an Euclidean problem \cite[Lemma~4.2]{ILS} is true for any variation through geodesics which moves infinitesimally in the right direction at the corner.

\begin{lem}\label{lem:corner reconstruction}
    Suppose that $f : \tild{M} \to \R$ is a piecewise constant function on the tiling $T$. For all $\theta$ in some neighborhood of $0$,
    if $J(0)=w_{\theta+\pi/2}$, then 
    \begin{equation*}
        \lim_{\eps \to 0}\frac{1}{\eps}\int_{\gamma_\theta^\eps\cap\,C}{f\,ds}=\int_{\ell_\theta\cap\,S}{T_pf\,ds}.
    \end{equation*}
\end{lem}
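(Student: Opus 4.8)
\textbf{Proof plan for \cref{lem:corner reconstruction}.}

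The plan is to compare, for small $\eps$, the Riemannian integral $\int_{\gamma_\theta^\eps\cap C}f\,ds$ with the Euclidean integral along the corresponding line segment in $T_pM$, using normal coordinates at $p$. First I would fix $\theta$ near $0$ and work in normal coordinates centered at $p$ associated to the basis $(\omega,\nu)$; in these coordinates the geodesic $\gamma_\theta$ is a straight line through the origin with direction $w_\theta$, and the exponential map identifies a neighborhood of $0$ in $T_pM$ with a neighborhood of $p$ in $\tild M$, sending the sector $S=\bigcup_{\Delta\in\mathcal C}C_p\Delta$ to the union of the corner tiles (to first order). The key geometric input is that since $f$ is piecewise constant on the tiling $T$ with $p$ a vertex, inside a small ball $B(p,r)$ the function $f$ agrees, after the exponential chart, with its tangent function $T_pf$ up to an error region whose measure is controlled: each tile edge emanating from $p$ is a smooth curve tangent at $p$ to the corresponding ray of $S$, so the symmetric difference between $\{f=a_i\}$ and the image of $\inter{C_p\Delta_i}$ meets the ball $B(p,\rho)$ in a set that is $O(\rho^3)$ in area (a cusp-shaped region between a ray and a curve with the same tangent).

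Next I would parametrize the perturbed geodesic. By hypothesis the variation $\Gamma$ has Jacobi field $J$ with $J(0)=w_{\theta+\pi/2}$, so $\gamma_\theta^\eps$ passes within distance $\eps\,|J(0)| + O(\eps^2) = \eps + O(\eps^2)$ of $p$, displaced in the direction $w_{\theta+\pi/2}$, i.e. it shadows the Euclidean line $\ell_\theta$ to first order in $\eps$. Writing the integral in arclength and changing variables to the Euclidean arclength parameter on $\ell_\theta\cap S$, the integrand $f(\gamma_\theta^\eps(s))$ equals $T_pf$ evaluated at the corresponding point of $\ell_\theta$ except on the set of $s$ where $\gamma_\theta^\eps$ lies in one of the cusp error regions near a tile edge, plus the set where it exits the ball. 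The length of $\gamma_\theta^\eps\cap C$ is $O(\eps)$ since $\gamma_\theta^\eps$ passes at distance $\sim\eps$ from the vertex $p$, and the Riemannian and Euclidean arclength elements agree up to a factor $1+O(|x|^2)=1+O(\eps^2)$ on this segment. The contribution of the cusp error regions: each has width $O(\eps^2)$ transverse to $\ell_\theta$ near the crossing point (because the edge curve deviates from its tangent ray by $O(\rho^2)$ at distance $\rho\sim\eps$ from $p$), so $\gamma_\theta^\eps$ spends arclength $O(\eps^2)$ inside it, and there are finitely many edges. Hence $\int_{\gamma_\theta^\eps\cap C}f\,ds = \int_{\ell_\theta\cap S}T_pf\,ds \cdot \eps + O(\eps^2)$ — more precisely, $\frac1\eps\int_{\gamma_\theta^\eps\cap C}f\,ds \to \int_{\ell_\theta\cap S}T_pf\,ds$ as $\eps\to0$, where the right-hand integral is understood with $\ell_\theta$ rescaled to pass at unit distance; the normalization $J(0)=w_{\theta+\pi/2}$ (a unit vector at distance $1$) is exactly what makes the limiting line be the $\ell_\theta$ of \cref{subsctn:corner variation} rather than a scaled copy.

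The main obstacle I expect is making the error estimates uniform and rigorous near the vertex, in particular controlling the three competing small quantities — the distance of $\gamma_\theta^\eps$ from $p$ (order $\eps$), the deviation of each tile edge from its tangent ray (order distance-squared), and the discrepancy between Riemannian and Euclidean geometry in the normal chart (order distance-squared) — and checking they combine to give a genuine $o(\eps)$ error after dividing by $\eps$. One must also handle the boundary case where $\gamma_\theta^\eps$ can leave and re-enter the cone $S$ or exit the ball $B(p,r)$, but the choice of $r$ in \cref{subsctn:corner def} (each tile cutting the ball into three parts, $\Sigma$ cutting it into two) ensures the combinatorial structure of the intersection $\gamma_\theta^\eps\cap C$ is stable for $(\theta,\eps)$ near $(0,0)$, so only finitely many edge-crossings occur and each is transverse. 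Since this is exactly the content of \cite[Lemma~4.2]{ILS} with the variation replaced by an arbitrary one satisfying $J(0)=w_{\theta+\pi/2}$ — and the proof there only uses the first-order behavior of the variation at $p$, which is precisely $J(0)$ — I would follow that argument essentially verbatim, flagging only the place where a general variation is used in lieu of the specific one.
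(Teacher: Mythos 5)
Your proposal is correct and ends up taking the same route as the paper, whose entire proof is likewise to invoke \cite[Lemmas~4.1--4.2]{ILS} and observe that the argument there (implicit function theorem for the meeting times with the edges, plus the derivative computations at the corner) uses only the first-order data of the variation at $p$, namely $J(0)=w_{\theta+\pi/2}$. Your preliminary cusp-region/area sketch is a reasonable self-contained unpacking of what the ILS meeting-time argument establishes, but it is not needed and the paper does not carry it out.
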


\begin{proof}
	
    The proof of this fact is exactly the same as the proof of \cite[Lemma~4.1]{ILS} and \cite[Lemma~4.2]{ILS}.
    We replace here the geodesics $\gamma^h_v$ considered in \cite{ILS} by the geodesics $\gamma_\theta^\eps$ of our more general variation through geodesics. The implicit function theorem still holds because it uses only data on $\gamma_\theta$ which is the same in \cite{ILS} and here. Computations for the derivatives of the meeting times can be done also the same way since they rely on the exact same information at the corner at $p$ (in particular $\partial_\eps\gamma_\theta^\eps|_{\eps=0}=J$ is assumed to be equal to $w_{\theta+\pi/2}$ at $t=0$). 
\end{proof}

\subsection{Proof of the reconstruction in a corner}
\label{subsctn:corner reconstruction}
\begin{proof}[Proof of \cref{lem:THE reconstruction in a corner}]
	Enumerate from $1$ to $N+1$ by increasing angles the edges of the tiles of $\mathcal{C}$ and denote $\Delta_i \in \mathcal{C}$ the tile having edges $e_i$ and $e_{i+1}$ at $p$. Denote also $a_i$ the values of $f$ on $\Delta_i$. The tangent function $T_pf$ is in this case a conical function defined on conical sectors as in \eqref{eqn:def conical function} with here $\alpha_i=\frac{1}{\tan(\theta_i)}$ where $\theta_i$ is the angle of the edge $e_i$ in the corner.
	
	By \cref{lem:corner reconstruction} and \cref{lem:conical functions} we can then reconstruct $f$ in the corner $C$ by the formula
    \begin{equation} \label{eqn:values equal matrix derivatives}
            \left ( \begin{array}{c}
             a_1 \\
             \vdots \\
             a_{N}
            \end{array}\right)
            =A^{-1}
            \left ( \begin{array}{c}
             F(0) \\
             \vdots \\
             F^{(N-1)}(0)
            \end{array}\right)
        \end{equation}
        where $A$ defined as in \eqref{eqn:def of A} and
        \begin{equation}\label{eq:def of F}
        	F(t)=\cos^2(\arctan(t))\partial_\eps I_C(\arctan(t),0).
        \end{equation} 

\end{proof}

    \section{Near the boundary}\label{sctn:boundary}
        In this section, we recover the values of $f$ near the boundary by asymptotic computations at vertices of the tiling. We start by reconstructing $f$ on tangent tiles and then we deal with corner tiles touching the boundary using \cref{sctn:corner}.

Let $p\in\bdry$ be a vertex of the tiling. At $p$, there are two tangent tiles $\Delta_1$ and $\Delta_N$ which must have an edge all along the boundary and a certain number of corner tiles (maybe none) $\Delta_2,...,\Delta_{N-1}$ which intersects the boundary only at $p$. Enumerate the tiles by increasing maximum angles of the tangent cones.

Denote by $\nu$ the inward pointing normal to $\bdry$ at $p$ and take $\omega\in T_pM$ so that $(\omega,\nu)$ is an orthonormal basis of $T_pM$. Extend smoothly the metric outside of $M$ and denote then as in \cref{subsctn:corner variation} $\gamma_\theta$ the geodesic starting from $p$ with initial velocity $w_\theta$.

\subsection{Tangent tiles}
Denote $a_1$ and $a_N$ the values of $f$ on $\Delta_1$ and $\Delta_N$ respectively (see figure \ref{fig:side tiles}). We will show the following reconstruction lemma.

\begin{lem}\label{lem:side tiles reconstruction}
    For $i=1,N$,
    \begin{equation*}
		a_i=\frac{\kappa}{2}\,\lim_{\eps\to 0^+}\frac{1}{\eps}\ray(\gamma_i^\eps),
    \end{equation*}
    where $\kappa$ is the curvature of $\bdry$ at $p$.
\end{lem}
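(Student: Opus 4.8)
The plan is to work at a boundary vertex $p$ and look at the geodesic $\gamma_i$ (for $i=1$ or $i=N$) which starts at $p$ tangentially to $\bdry$, i.e.\ with initial velocity $w_{\theta_i}=\pm\omega$, where $\omega$ is the unit tangent to $\bdry$ at $p$. For a tangent tile $\Delta_i$, the edge along $\bdry$ is tangent to $\gamma_i$ at $p$, and the other edge of $\Delta_i$ at $p$ lies strictly inside $M$; so for small $\eps>0$ the nearby geodesic $\gamma_i^\eps$ (pushed infinitesimally inward) enters a region where it crosses $\Delta_i$ and possibly some tiles of depth $\ge1$ that contribute nothing to the integral. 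The key point is that $\gamma_i^\eps$ passes through $\Delta_i$ only, among tiles with nonzero value, for $\eps$ small, because the other tangent/corner tiles at $p$ have tangent cones bounded away from the direction $w_{\theta_i}$. So $\ray(\gamma_i^\eps)=a_i\cdot\mathrm{length}(\gamma_i^\eps\cap\Delta_i)+o(\eps)$, and it remains to compute the asymptotics of that chord length.

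First I would set up normal coordinates $(x,y)$ at $p$ as in \cref{lem:parametrization boundary}, so that $\bdry$ is the graph $y=h(x)=\frac{\kappa}{2}x^2+O(x^3)$ with $\kappa>0$, and $\Delta_i$ is (to leading order) the region between $\bdry$ and a line through the origin with nonzero slope, intersected with $\{y\ge h(x)\}$. The geodesic $\gamma_i$ is, in these coordinates, a curve through the origin with velocity $\pm\omega=(\pm1,0)$ and, being a geodesic, $y(t)=O(t^2)$ (zero Christoffel contribution at the origin in normal coordinates, so the second-order term is governed by curvature but is still $O(t^2)$). A variation of $\gamma_i$ through geodesics moving inward, i.e.\ with Jacobi field $J(0)=0$, $J'(0)=\nu$ or more simply the family $\gamma_i^\eps$ with $\gamma_i^\eps(0)=p$ and initial velocity rotated by a small angle toward $\nu$, will have $\gamma_i^\eps$ lying a height $\sim c\eps\, x$ above $\gamma_i$ near $p$. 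Then the chord of $\gamma_i^\eps$ inside $\Delta_i$ runs from $x\approx0$ out to the first $x$ where $\gamma_i^\eps$ exits $\Delta_i$; comparing the parabola $y=\frac{\kappa}{2}x^2$ of the boundary with the nearly-linear-in-$x$ lower edge of $\gamma_i^\eps$, this exit happens at $x\sim C\eps$, and a short computation gives $\mathrm{length}(\gamma_i^\eps\cap\Delta_i)\sim \frac{2}{\kappa}\eps$ (up to the precise normalization of $\eps$ fixed by the variation), which yields $a_i=\frac{\kappa}{2}\lim_{\eps\to0^+}\frac1\eps\ray(\gamma_i^\eps)$.

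The main obstacle, and the step I would be most careful about, is making precise that only $\Delta_i$ contributes and quantifying the chord length with the correct constant: one must check that $\gamma_i^\eps$, after leaving $\Delta_i$, does not re-enter $M$ through another tile carrying nonzero value before leaving $M$ — this uses strict convexity of $\bdry$ (so the geodesic stays on the inward side only for an interval whose length is $O(\sqrt{\eps})$ in $x$, hence controlled) and the structure of the tiling near $p$ — and, conversely, that the ray does enter $\Delta_i$ at all, i.e.\ the inward perturbation is genuinely toward the interior of the tangent cone $C_p\Delta_i$. The bookkeeping of how the variation parameter $\eps$ enters is exactly the content that is handled in the corner reduction (\cref{lem:corner reconstruction}) with $J(0)=w_{\theta+\pi/2}$; here the boundary plays the role of one of the edges, and the curvature $\kappa$ enters precisely because the ``edge'' $\bdry$ is curved rather than straight, replacing a slope $\alpha_i$ in the Euclidean conical computation by the parabola. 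So concretely I would: (1) reduce to leading-order geometry in normal coordinates; (2) identify the chord endpoints as solutions of $y_{\gamma_i^\eps}(x)=h(x)$ and $y_{\gamma_i^\eps}(x)=(\text{slope of inner edge})\cdot x$; (3) expand in $\eps$ to get the chord length $\sim\frac2\kappa\eps$; (4) invoke the ``only $\Delta_i$ matters'' observation to conclude $\ray(\gamma_i^\eps)\sim\frac{2a_i}{\kappa}\eps$; (5) solve for $a_i$.
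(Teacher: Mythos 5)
Your plan is correct and takes essentially the same route as the paper: since for small $\eps>0$ the whole geodesic $\gamma_i^\eps$ lies in $\Delta_i$, everything reduces to showing that the nontrivial exit time satisfies $t_i'(0)=2/\kappa$, which the paper obtains in \cref{lem:time side tiles reconstruction} via a boundary defining function and the implicit function theorem, and which you obtain by the equivalent explicit computation $\eps x=\tfrac{\kappa}{2}x^2+O(x^3)$ in normal coordinates (the very method the paper uses later in \cref{lem:ending times theta null}). The only details to pin down are that radial geodesics through the centre of normal coordinates are exactly straight lines, so the quadratic term you allow in $y(t)=O(t^2)$ is in fact absent and cannot shift the constant, and that with the paper's normalization $\dot\gamma_i^\eps(0)=\pm\omega+\eps\nu$ the normal displacement is exactly $\eps t$, which removes the ambiguity you flag ``up to the precise normalization of $\eps$'' and yields the constant $2/\kappa$ exactly.
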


The geodesics used in this lemma are defined as follow : $\gamma_1^\eps$ (resp. $\gamma_N^\eps$) is the geodesic starting from $p$ with initial velocity $ \omega+\eps\nu$ (resp. $ - \omega+\eps\nu$).

\begin{figure}[h]

	\begin{tikzpicture}[scale=1]
		
		\draw (0,0) node {$\bullet$};
		\draw (0,0) node[below] {$p$};
		
		\draw [very thick, domain=-4:7] plot (\x,{0.05*(\x)^2});
		\draw  (-3,1) node {$\bdry$};
		
		\draw [domain=-3.5:4] plot (\x,{-0.04*(\x)^2}) node[right] {$\gamma$};
	
		\draw [dashed, domain=-2:6.6,rotate=30] plot (\x,{-0.04*(\x)^2}) node[below] {$\gamma_1^\eps$};
		\draw [thick] (0,0) -- ++(50:4) ;
		\draw (40:4.2) node{$\Delta_1$};
		\draw [thick] (0,0) -- ++(130:4);
		\draw (140:3) node {$\Delta_N$};
		
		\draw [->,>= stealth] (0.7,0) arc (0:130:0.7);
		\draw (140:1) node[above right]{$\theta_N$};
		\draw [->,>= stealth] (1,0) arc (0:50:1);
		\draw (30:1) node[right]{$\theta_1$};
		
		\draw[->,>= stealth] (0,0) -- (0:2) node[below] {$\omega$};
		\draw[->,>= stealth] (0,0) -- (90:2) node[above,left] {$\nu$};
		\draw[->,>= stealth] (0,0) -- (30:2.31) node[above right] {$\dot\gamma_1^\eps(0)$};
		
	\end{tikzpicture}
	\caption{Tangent tiles setting for the reconstruction}
	\label{fig:side tiles}
	\end{figure}
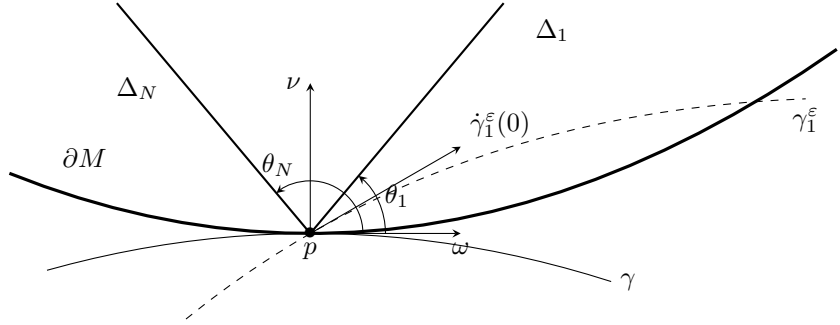

\Cref{lem:side tiles reconstruction} is a direct corollary of the following lemma since $\ray(\gamma_i^\eps)=a_it_i(\eps)$ for $\eps$ small enough.

\begin{lem}
	\label{lem:time side tiles reconstruction}
	The non trivial time $t_i(\eps)$ at which $\gamma_i^\eps$ touches the boundary is smooth in $\eps$ and
	\begin{equation*}
		t_i'(0)=\frac{2}{\kappa},
	\end{equation*}
	where $\kappa$ is the curvature of $\bdry$ at $p$.
\end{lem}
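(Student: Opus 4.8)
The plan is to work in the normal coordinates centered at $p$ provided by \cref{lem:parametrization boundary}, in which the boundary is the graph $y=h(x)=\frac{\kappa}{2}x^2+\frac{j}{6}x^3+o(x^3)$, and to write down the equation $t_i(\eps)$ must satisfy. First I would set up the geodesic $\gamma_i^\eps$ in these coordinates: it starts at the origin with velocity $\pm\omega+\eps\nu$, i.e. initial velocity $(\pm 1,\eps)$ (up to normalization, which only contributes higher-order corrections). Since the metric in normal coordinates agrees with the Euclidean metric to first order at $p$, the geodesic is, for small parameter values, of the form $x(t)=\pm t + O(t^2)$, $y(t)=\eps t + O(t^2)$, where the $O(t^2)$ terms involve the Christoffel symbols and are smooth in $(t,\eps)$. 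The condition that $\gamma_i^\eps$ meets the boundary at a nontrivial time is $y(t)=h(x(t))$, i.e.
\begin{equation*}
G(t,\eps):=y(t)-h(x(t))=0.
\end{equation*}

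Next I would analyze $G$ near $(t,\eps)=(0,0)$. Note $G(0,0)=0$ but $\partial_t G(0,0)=0$ as well (both $y$ and $h\circ x$ vanish to second order at $t=0$ when $\eps=0$), so the implicit function theorem does not apply directly to the full equation. Instead I factor out the trivial root $t=0$: writing $G(t,\eps)=t\cdot\widetilde G(t,\eps)$ with $\widetilde G$ smooth (this is legitimate because $G(0,\eps)=0$ identically, the geodesic always starting on the boundary), the nontrivial meeting time solves $\widetilde G(t,\eps)=0$. The key computation is then $\widetilde G(0,0)$ and $\partial_t\widetilde G(0,0)$. Using $y(t)=\eps t+O(t^2)$ and $h(x(t))=\frac{\kappa}{2}t^2+O(t^3)$ at $\eps=0$, one gets $G(t,0)=-\frac{\kappa}{2}t^2+O(t^3)$, hence $\widetilde G(t,0)=-\frac{\kappa}{2}t+O(t^2)$, so $\widetilde G(0,0)=0$ and $\partial_t\widetilde G(0,0)=-\frac{\kappa}{2}\neq 0$ since $\kappa>0$ by strict convexity. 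Also $\partial_\eps\widetilde G(0,0)$ comes from the $\eps t$ term in $y$: $G(t,\eps)=\eps t+(\text{terms }O(t^2))$, so $\widetilde G(t,\eps)=\eps+O(t)$, giving $\partial_\eps\widetilde G(0,0)=1$. Now the implicit function theorem applies to $\widetilde G$ at $(0,0)$: it yields a smooth function $t_i(\eps)$ with $t_i(0)=0$, and
\begin{equation*}
t_i'(0)=-\frac{\partial_\eps\widetilde G(0,0)}{\partial_t\widetilde G(0,0)}=-\frac{1}{-\kappa/2}=\frac{2}{\kappa}.
\end{equation*}

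The main obstacle I anticipate is bookkeeping the higher-order terms carefully enough to be sure nothing at order $t^2$ or $\eps t$ has been dropped that would actually feed into $\widetilde G(0,0)$ or its first derivatives. Concretely one must check that: (i) the normalization of the initial velocity $\pm\omega+\eps\nu$ to unit length only changes things at order $\eps^2$ and so is irrelevant for $t_i'(0)$; (ii) the Christoffel-symbol corrections to $x(t),y(t)$ are $O(t^2)$ with coefficients smooth in $\eps$, and in particular the correction to $y(t)$ at order $t^2$ when $\eps=0$, together with the cubic term of $h$, only affects $\widetilde G$ at order $t$, not its value at $0$; and (iii) the factorization $G(t,\eps)=t\widetilde G(t,\eps)$ is valid with $\widetilde G$ genuinely smooth — this follows from $G(0,\eps)\equiv 0$ and Hadamard's lemma. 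Once these points are nailed down, the conclusion $t_i'(0)=2/\kappa$ is immediate, and combined with $\ray(\gamma_i^\eps)=a_i t_i(\eps)$ for $\eps$ small (valid since for small $\eps$ the geodesic $\gamma_i^\eps$ only passes through the single tile $\Delta_i$ before exiting) this also proves \cref{lem:side tiles reconstruction}. It is worth noting that the jerk $j$ does not enter $t_i'(0)$; it would only appear in the second-order expansion of $t_i$, which is presumably why \cref{lem:parametrization boundary} records it for use elsewhere in the paper.
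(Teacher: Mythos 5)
Your proof is essentially the paper's proof. The paper sets $h(\eps,t)=\rho(\gamma_i^\eps(t))$ for a boundary defining function $\rho$, factors out the trivial root $t=0$ to obtain $g(\eps,t)=h(\eps,t)/t$, and applies the implicit function theorem to $g$ with $\partial_t g(0,0)=c(0)/2=-\kappa/2$ and $\partial_\eps g(0,0)=\inner{D_tJ(0)}{\nu}=1$; your $G(t,\eps)=y(t)-h(x(t))$ is just the explicit defining function in the normal chart, and your $\widetilde G$ plays the role of $g$. One caveat about your checklist item (ii): a nonzero coefficient of $t^2$ in $y(t)\big|_{\eps=0}$ would \emph{not} be harmless, since it would contribute directly to $\partial_t\widetilde G(0,0)$, which is precisely the denominator in the implicit function theorem formula and hence would alter $t_i'(0)$; saying it ``only affects $\widetilde G$ at order $t$'' does not dispose of it. The reason the computation nevertheless closes is that in normal coordinates centered at $p$ the curves $\gamma_i^\eps$ are radial geodesics, hence exactly the straight lines $t\mapsto(\pm t,\eps t)$ with no higher-order corrections at all; thus $\partial_t\widetilde G(0,0)=-h''(0)/2=-\kappa/2$ on the nose. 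With that one observation supplied, your argument is complete and yields the same constants as the paper's.
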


\begin{proof}
	The smoothness of $t_i$ is a known fact (see \cite{sharafutdinov1999ray,U13}). We recall here the proof of \cite[Lemma~2.5]{U13} to show the formula for the derivative. Let $\rho$ be a boundary defining function for $\bdry$ around $p$. Define then the following smooth function in a neighborhood of $(0,0)$
	\begin{equation}\label{eqn:def of h}
		h(\eps,t)=\rho(\gamma_i^\eps(t))
	\end{equation}
	so that $\gamma_i^\eps(t)\in\bdry \Leftrightarrow h(\eps,t)=0$ around $(0,0)$.
	
	We have that for all $\eps$ near $0$
	\begin{equation}
		\partial_t h(\eps,0) = \inner{\nu}{\dot\gamma_i^\eps(0)}
	\end{equation}
	and
	\begin{equation}
		\partial^2_t h(\eps,0) =: c(\eps).
	\end{equation}
	So the Taylor's expansion in $t$ for $h$ at $0$ gives a smooth function $R$ such that for all $(\eps,t)$ near $(0,0)$ 
	\begin{equation}
		h(\eps,t)=\inner{\nu}{\dot\gamma_i^\eps(0)} \,t + \frac{c(\eps)}{2}\,t^2 + R(\eps,t)\,t^3.
	\end{equation}
	and $R(\eps,t)\to 0$ when $t\to 0$.
	
	Therefore, if we define near $(0,0)$
	\begin{equation}
		g(\eps,t)=\inner{\nu}{\dot\gamma_i^\eps(0)} + \frac{c(\eps)}{2}\,t + R(\eps,t)\,t^2,
	\end{equation}	
	the non trivial ending time $t(\eps)$ satisfies
	\begin{equation}
		\label{eqn:g ending times}
		g(\eps,t(\eps))=0.
	\end{equation}
	Since $\partial_t g(0,0)=c(0)/2=-\kappa/2<0$, the implicit function theorem gives that the non trivial ending time $t(\eps)$ is smooth and that differentiating \eqref{eqn:g ending times} we can conclude using that
	\begin{equation}
		\partial_\eps g(0,0) = \inner{D_tJ(0)}{\nu} = 1.
	\end{equation}
\end{proof}
%

\begin{rk}\label{rk:side tiles noncanonical method}
    We could have reconstructed the values $a_i$ by considering a geodesic $\gamma_i^\eps$ for a fixed $\eps$ small enough so that it is entirely contained in $\Delta_i$ and use the formula $a_i=\frac{1}{len(\gamma_i^\eps)}\int_{\gamma_i^\eps}{f\,ds}$. Even if this method does not involve any derivatives, it gives a result which implies geodesics of the variation instead of just the Jacobi field. \Cref{lem:side tiles reconstruction} expresses the values of $f$ on tangent tiles around $p$ only with infinitesimal data at $p$.
\end{rk}

\subsection{Corner tiles}\label{subsctn:middle tiles}

We are now left with corner tiles at the boundary that we will reconstruct using \cref{sctn:corner} : reducing the Riemannian problem to a Euclidean one. To do so we only need to compute $\int_{\gamma^\eps_\theta\cap C}{f\,ds}$ for well chosen geodesics $\gamma_\theta^\eps$ where the corner $C$ is defined as in \cref{sctn:corner}. 

The variations through geodesics we use are the same as in \cite[Section~4]{ILS}. For $\theta$ near $0$, and $\eps\geq 0$ small enough let $w_\theta(\eps)$ be the unit vector defined as the parallel transport of $w_\theta$ along the geodesic through $w_{\theta+\pi/2}$ by distance $\eps$. Define now the geodesic $\gamma_{\theta}^\eps$ as the maximal geodesic through $w_\theta(\eps)$. Denote $J_\theta$ the Jacobi field of this variation through geodesics in $\eps$.

    



Denote $t_1^\theta(\eps)$ and $t_{N}^\theta(\eps)$ the respective positive and negative ending times of $\gamma_\theta^\eps$. Denote also $p_i^\theta:=\gamma_\theta(t_i^\theta(0))$.

\begin{lem}
	\label{lem:ending times theta non null}
	With the previous notations, for $\theta$ sufficiently close to $0$ and $\theta\ne0$, then both ending times $t_i^\theta$ are smooth for $\eps$ near $0$ and
    	\begin{equation*}
			t_i^\theta(\eps) = t_i^\theta(0) - \frac{\inner{J_\theta(t_i^\theta(0))}{\nu(p_i^\theta)}}{\inner{\dot\gamma_\theta(t_i^\theta(0))}{\nu(p_i^\theta)}} \,\eps + \underset{\eps\to 0}{o}(\eps).
		\end{equation*}
\end{lem}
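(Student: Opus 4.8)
The plan is to mimic the proof of \cref{lem:time side tiles reconstruction} almost verbatim, the difference being that the geodesic is hitting the boundary transversally (since $\theta\neq 0$) rather than tangentially, so the implicit function theorem applies directly to the first-order Taylor coefficient. First I would fix $i\in\{1,N\}$, let $\rho$ be a boundary defining function for $\bdry$ near $p_i^\theta=\gamma_\theta(t_i^\theta(0))$, and define the smooth function $h(\eps,t)=\rho(\gamma_\theta^\eps(t))$ in a neighborhood of $(0,t_i^\theta(0))$, so that $\gamma_\theta^\eps(t)\in\bdry$ is equivalent to $h(\eps,t)=0$ near that point.

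The key observation is that $\partial_t h(0,t_i^\theta(0)) = \inner{\mathrm{grad}\,\rho}{\dot\gamma_\theta(t_i^\theta(0))}$, which up to a nonzero scalar depending only on the normalization of $\rho$ equals $\inner{\dot\gamma_\theta(t_i^\theta(0))}{\nu(p_i^\theta)}$; the latter is \emph{nonzero} precisely because $\gamma_\theta$ meets the strictly convex boundary transversally when $\theta\neq 0$ (for $\theta=0$ the geodesic $\gamma_0$ is tangent to $\Sigma$ at $p$, which is exactly the degenerate case handled separately in \cref{lem:time side tiles reconstruction}). Hence the implicit function theorem applies directly to $h$ (no need to factor out a trivial root and divide by $t$) and yields a smooth function $t_i^\theta(\eps)$ with $h(\eps,t_i^\theta(\eps))=0$ and $t_i^\theta(0)$ the prescribed value. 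Differentiating this identity at $\eps=0$ gives
\begin{equation*}
    (t_i^\theta)'(0) = -\frac{\partial_\eps h(0,t_i^\theta(0))}{\partial_t h(0,t_i^\theta(0))}.
\end{equation*}

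It then remains to identify the two partial derivatives. For the denominator, $\partial_t h(0,t_i^\theta(0))$ is proportional to $\inner{\dot\gamma_\theta(t_i^\theta(0))}{\nu(p_i^\theta)}$ with the same proportionality constant as in the numerator, so the constants cancel in the ratio. For the numerator, $\partial_\eps h(0,t)=\inner{\mathrm{grad}\,\rho}{\partial_\eps\gamma_\theta^\eps(t)|_{\eps=0}}=\inner{\mathrm{grad}\,\rho}{J_\theta(t)}$, which at $t=t_i^\theta(0)$ is proportional to $\inner{J_\theta(t_i^\theta(0))}{\nu(p_i^\theta)}$ by the same constant. Substituting into the displayed formula for $(t_i^\theta)'(0)$ the constants cancel and we obtain
\begin{equation*}
    (t_i^\theta)'(0) = -\frac{\inner{J_\theta(t_i^\theta(0))}{\nu(p_i^\theta)}}{\inner{\dot\gamma_\theta(t_i^\theta(0))}{\nu(p_i^\theta)}},
\end{equation*}
and a first-order Taylor expansion of $t_i^\theta$ at $\eps=0$ gives the claim. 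The only genuine point requiring care — and the step I expect to be the main obstacle to write cleanly rather than conceptually — is the justification that $\inner{\dot\gamma_\theta(t_i^\theta(0))}{\nu(p_i^\theta)}\neq 0$ for $\theta$ small and nonzero; this follows from the fact that, by \cref{lem:geodesics in foliated manifolds} (or directly from strict convexity of $\Sigma$), the geodesic $\gamma_\theta$ is tangent to $\Sigma$ at only the point $p$ when $\theta=0$, so by continuity for small $\theta\neq0$ the geodesic still exits the relevant side of $\Sigma$ transversally at the nearby points $p_i^\theta$.
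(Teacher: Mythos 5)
Your proposal is correct and follows essentially the same route as the paper: define $h(\eps,t)=\rho(\gamma_\theta^\eps(t))$ for a boundary defining function $\rho$ near $p_i^\theta$, use that $\theta\neq 0$ and strict convexity force transversality so that $\partial_t h(0,t_i^\theta(0))=\inner{\dot\gamma_\theta(t_i^\theta(0))}{\nu(p_i^\theta)}\neq 0$, apply the implicit function theorem, and differentiate $h(\eps,t_i^\theta(\eps))=0$ using $\partial_\eps h(0,t_i^\theta(0))=\inner{J_\theta(t_i^\theta(0))}{\nu(p_i^\theta)}$. Your added remarks on the normalization of $\rho$ cancelling in the ratio are a minor refinement of the same argument.
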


\begin{proof}
	Define $h$ as in the proof of \cref{lem:time side tiles reconstruction} but with $\rho$ a defining function for the boundary around the endpoint $p_i^\theta$, so that $\gamma_\theta^\eps(t)\in\bdry \Leftrightarrow h(\eps,t)=0$ around $(0,t_i^\theta(0))$.
	
	Since $\theta\ne 0$, the strict convexity of the boundary gives that $\gamma_\theta^\eps$ does not touch $\bdry$ tangentially at its endpoints. We thus have 
	\begin{equation}
		\partial_t h(0,t_i^\theta(0)) = \inner{\nu(p_i^\theta)}{\dot\gamma_\theta(t_i^\theta(0))}\ne 0.
	\end{equation}
	Therefore, the implicit function theorem gives us the smoothness of $t_\theta$ and we can conclude differentiating $h(\eps,t_i^\theta(\eps))=0$ and using that $\partial_\eps h(0,t_i^\theta(0))=\inner{J_\theta(t_i^\theta(0))}{\nu(p_i^\theta)}$.
\end{proof}

The only case where a non smoothness appears is when $\theta=0$. The ending time is equivalent to a square root of $\eps$ near $0$ because of the strict convexity of the boundary at $p$. The next term in the asymptotical expansion comes from the jerk of $\bdry$ at $p$ or in other words the variation of the curvature at $p$.

\begin{lem}
	\label{lem:ending times theta null}
    With the previous notations, if $\theta=0$, then
		\begin{align*}
		  t_1(\eps) = \sqrt{\frac{2}{\kappa}\eps\,} - \frac{j}{3\kappa^2}\,\eps 
		  + \underset{\eps\to 0}{o}(\eps) \\
		  t_N(\eps) = - \sqrt{\frac{2}{\kappa}\eps\,} - \frac{j}{3\kappa^2}\,\eps 
		  + \underset{\eps\to 0}{o}(\eps)
		\end{align*}
	  where $\kappa>0$ is the curvature and $j$ the jerk of $\bdry$ at $p$ for $\dot\gamma(0)$.
\end{lem}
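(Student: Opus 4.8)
The plan is to redo the analysis of the proof of \cref{lem:time side tiles reconstruction}, but now keeping one more order in the Taylor expansions, since the leading term in $t$ of the boundary-defining function will vanish at $\theta=0$. First I would work in the normal coordinates at $p$ given by \cref{lem:parametrization boundary}, so that $\bdry$ is locally the graph $y = h(x) = \tfrac{\kappa}{2}x^2 + \tfrac{j}{6}x^3 + o(x^3)$ and I can take $\rho(x,y) = y - h(x)$ as a boundary-defining function. Set $h(\eps,t) = \rho(\gamma_1^\eps(t))$; writing $\gamma_1^\eps(t) = (x(\eps,t), y(\eps,t))$ and expanding this smooth curve in $t$ at $t=0$, using that $\gamma_1^\eps(0) = p = (0,0)$, that $\dot\gamma_1(0) = \omega = (1,0)$ (so $\theta=0$), and that $\partial_\eps\gamma_1^\eps(0)|_{\eps=0}$ is the Jacobi field value with $\inner{D_t J(0)}{\nu} = 1$, I would obtain an expansion of $h(\eps,t)$ in the two small parameters. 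The key point is that at $\eps=0$ the curve $\gamma_1^0$ is itself a geodesic through $p$ tangent to $\bdry$, so $\rho(\gamma_1^0(t)) = -\tfrac{\kappa}{2}t^2 + O(t^3)$ with the quadratic term controlled by the curvature $\kappa$ of $\bdry$ relative to the geodesic, and the cubic term involving the jerk $j$ together with the geodesic's own third-order data.

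Next I would make the substitution that turns this into an invertible equation. Since the $t$-linear term of $h(\eps,t)$ is $O(\eps)$ (it equals $\inner{\nu}{\dot\gamma_1^\eps(0)}\,t$, and $\inner{\nu}{\dot\gamma_1^\eps(0)}|_{\eps=0}=0$ with derivative $1$ in $\eps$) while the $t$-quadratic term is $-\tfrac{\kappa}{2}t^2 + O(\eps t^2)$, the equation $h(\eps,t)=0$ for the non-trivial root behaves like $\eps \asymp t^2$. I would therefore introduce $t = \sqrt{\eps}\,\sigma$ (or equivalently $\eps = \tfrac{\kappa}{2}t^2 + \dots$ solved the other way) and apply the implicit function theorem in the variables $(\sqrt{\eps}, \sigma)$: dividing $h(\eps, \sqrt{\eps}\,\sigma)$ by $\eps$ gives a smooth function of $(\sqrt{\eps}, \sigma)$ whose zero set near $(0, \sqrt{2/\kappa})$ is a smooth graph $\sigma = \sigma(\sqrt{\eps})$. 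Expanding $\sigma(\sqrt{\eps}) = \sqrt{2/\kappa} + c_1\sqrt{\eps} + o(\sqrt{\eps})$ and reading off $c_1$ from the cubic-in-$t$ coefficient (the jerk contribution) versus the curvature, I get $t_1(\eps) = \sqrt{\eps}\,\sigma(\sqrt{\eps}) = \sqrt{\tfrac{2}{\kappa}\eps} + c_1\,\eps + o(\eps)$, and the computation should yield $c_1 = -\tfrac{j}{3\kappa^2}$. The $\gamma_N^\eps$ case is identical after replacing $\omega$ by $-\omega$, which flips the sign of the square-root branch but leaves the $\eps$-coefficient unchanged because the jerk for $-\dot\gamma(0)$ is $-j$ and the square-root branch contributes another sign, as in \cref{lem:parametrization boundary}'s remark.

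The main obstacle I anticipate is bookkeeping the cubic term precisely: one must track the $O(t^3)$ term of $\rho(\gamma_1^\eps(t))$, which mixes the jerk $j$ of the boundary, the second-order behavior of the geodesic $\gamma_1^0$ at $p$ (encoded through the Christoffel symbols / second fundamental form, though in these normal coordinates centered on a geodesic the geodesic is straight to second order), and the $\eps t^2$ cross term coming from $\partial_\eps$ of the quadratic coefficient; getting the numerical factor $\tfrac13$ and the $\kappa^{-2}$ right requires care. The actual application of the implicit function theorem in the blown-up coordinates $(\sqrt\eps,\sigma)$ is routine once the expansion of $h$ is in hand, and the smoothness claim of \cref{lem:parametrization boundary} guarantees the remainders are genuinely $o(x^3)$ so the $o(\eps)$ error in the conclusion is legitimate.
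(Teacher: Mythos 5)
There is a genuine gap, and it lies in the very first step: you are analysing the wrong variation through geodesics. You take $\gamma_1^\eps$ with $\gamma_1^\eps(0)=p$, $\dot\gamma_1(0)=\omega$ and $\inner{D_tJ(0)}{\nu}=1$ — that is the ``fan'' variation of \cref{lem:side tiles reconstruction} and \cref{lem:time side tiles reconstruction}, whose geodesics start \emph{on} the boundary at $p$ with tilted velocity $\omega+\eps\nu$. For that variation the defining function satisfies $h(\eps,0)=\rho(p)=0$ and $\partial_t h(\eps,0)=\eps+O(\eps^2)$, so the nontrivial root of $h(\eps,\cdot)=0$ balances $\eps t$ against $\tfrac{\kappa}{2}t^2$ and is $t(\eps)=\tfrac{2}{\kappa}\eps+o(\eps)$ — smooth, with no square root; that is precisely \cref{lem:time side tiles reconstruction}. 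Your subsequent claim that ``the equation $h(\eps,t)=0$ behaves like $\eps\asymp t^2$'' is therefore inconsistent with your own setup: the balance $\eps\asymp t^2$ requires the \emph{constant} term $h(\eps,0)$ to be $\asymp\eps$, which happens only because the variation of \cref{lem:ending times theta null} is the one of \cref{subsctn:middle tiles}: $\gamma^\eps$ passes through $\exp_p(\eps\nu)$ (a point at normal distance $\approx\eps$ inside $M$) with parallel-transported velocity, so that $J(0)=\nu$ and $D_tJ(0)=0$. With your data $J(0)=0$, the blown-up function $h(\eps,\sqrt{\eps}\,\sigma)/\eps$ equals $-\tfrac{\kappa}{2}\sigma^2+O(\sqrt{\eps})$ and has no zero near $\sigma=\sqrt{2/\kappa}$, so the implicit function theorem step you propose cannot be carried out. (A related symptom: you treat $t_1$ and $t_N$ as ending times of two different geodesics, whereas in this lemma they are the positive and negative ending times of the single geodesic $\gamma^\eps$.)

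Once the correct variation is substituted, your strategy — expand $\rho\circ\Gamma$ in normal coordinates, blow up via $t=\sqrt{\eps}\,\sigma$, and apply the implicit function theorem near $\sigma=\pm\sqrt{2/\kappa}$ — is a legitimate alternative to the paper's argument, which instead first derives the equivalent $t_i(\eps)\sim\pm\sqrt{2\eps/\kappa}$ from continuity of the ending times and then bootstraps by substituting $t=\pm\sqrt{2\eps/\kappa}+s_i(\eps)$ into the expansion $\eps=\tfrac{\kappa}{2}t^2+\tfrac{j}{6}t^3+o(\eps^{3/2})$ to extract $s_i(\eps)=-\tfrac{j}{3\kappa^2}\eps+o(\eps)$. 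If you pursue the blow-up route you would still need to justify that the remainder of the Taylor expansion of $\Gamma$ is $o(\eps^{3/2})$ uniformly along the curve $t\asymp\sqrt{\eps}$ (the paper uses that $\partial_{tt}^2\Gamma(0,0)=\partial_{ttt}^3\Gamma(0,0)=0$ in normal coordinates for this), and carry out the coefficient computation you currently leave as ``should yield $c_1=-\tfrac{j}{3\kappa^2}$.''
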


\begin{figure}[h]

	\begin{tikzpicture}[scale=0.8]
	
		\draw (0,0) node {$\bullet$};
		\draw (0,0) node[below] {$p$};
		
		\draw [domain=-6:6] plot (\x,{0.04*(\x)^2});
		\draw  (-5.3,1.5) node {$\bdry$};
		
		\draw [domain=-4:4] plot (\x,{-0.04*(\x)^2}) node[right] {$\gamma$};
	
		\draw [dashed, domain=-5.5:5.5] plot (\x,{-0.04*(\x)^2+1}) node[right] {$\gamma^\eps$};
		
		\draw[->,>= stealth] (0,0) -- (90:2) node[above,right] {$\nu$};
		
	\end{tikzpicture}
	\caption{The variation $\gamma^\eps$ when $\theta=0$.}
	\label{fig:bdry ending times}
\end{figure}
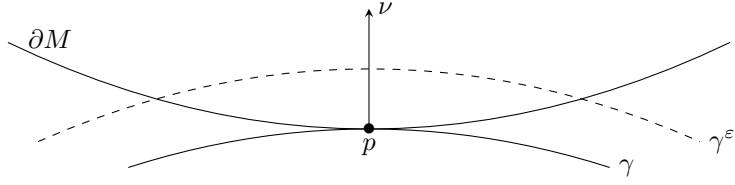

\begin{rk}\label{rk:sqrt singularity at boundary}
	Although it is known (see \cite[Lemma~3.2.1]{sharafutdinov1999ray} for instance) that the ending time is a smooth function of the starting point and velocity on the inward pointing boundary unit sphere bundle, here geodesics have a starting point in $\inter{M}$ and an initial velocity tangential to the strictly convex boundary which causes the square root singularity of \cref{lem:ending times theta null}
\end{rk}

\begin{proof}
	The idea of the proof is to use the Taylor's expansions for a function parametrizing the boundary near $p$ given by \cref{lem:parametrization boundary} and for the variation $\Gamma(\eps,t)=\gamma^\eps(t)$ near $(0,0)$.
		
	In the normal coordinates at $p$ given by the orthonormal basis $(\dot{\gamma}(0),\nu)$ of $T_pM$ there exist a smooth function $h$ defined near $0$ in $\R$ such that
	\begin{equation}\label{eqn:taylor of the bdry}
		h(x)=\frac{\kappa}{2} x^2 + \frac{j}{6} x^3 + \underset{x\to 0}{o}(x^3),
	\end{equation}
	and for $z=(x,y)$ in our local coordinates
	\begin{equation}\label{eqn: h is parametrizing}
		z=(x,y)\in\bdry \Leftrightarrow y = h(x)
	\end{equation}
	by \cref{lem:parametrization boundary}.
	
	We may then write the Taylor's expansion of $\Gamma$ near $(0,0)$ in our local coordinates
	\begin{equation}
		\Gamma(\eps,t)
		=\eps \nu
		+ t \omega
		+ \mathcal{O}(|\eps|^2+|t|^2),
	\end{equation}
	where we do not write the terms useless in the asymptotic expansion at order $\sqrt \eps$. Denote for $(\eps,t)$ close to $(0,0)$, $\Gamma(\eps,t)=\begin{pmatrix}
			x_\eps(t)\\
			y_\eps(t)
		\end{pmatrix}$. If $\Gamma(\eps,t)\in\bdry$ for $(\eps,t)$ near $(0,0)$ then by \eqref{eqn:taylor of the bdry} and \eqref{eqn: h is parametrizing},
	\begin{equation} \label{eqn:bdry rough expansion} 
		y_\eps(t)=\frac{\kappa}{2} x_\eps^2(t) + \frac{j}{6} x_\eps^3(t) + o(|x_\eps(t)|^3).
	\end{equation}
	Using \eqref{eqn:bdry rough expansion} one has for $(\eps,t)$ near $(0,0)$ such that $\Gamma(\eps,t)\in\bdry$ the following expansion
	\begin{equation}\label{eqn:taylor final}
		\eps - t^2\frac{\kappa}{2} = \mathcal{O}(|\eps|^2+|t|^2).
	\end{equation}
	By \cite[Lemma~2.3]{U13}, the times $t_i(\eps)$ are continuous. They also satisfy $t_i(0)=0$ and then \eqref{eqn:taylor final} when $\eps$ is near $0$ by definition. Moreover $\kappa>0$ by strict convexity of $\bdry$ so we have
	\begin{align}
		t_1(\eps) \underset{\eps\to 0^+}{\sim} \sqrt{\frac{2}{\kappa}\eps\,},\\
		t_N(\eps) \underset{\eps\to 0^+}{\sim} -\sqrt{\frac{2}{\kappa}\eps\,}.
	\end{align}
	
	Then we define $s_1(\eps)=t_1(\eps)-\sqrt{\frac{2}{\kappa}\eps\,}$ and $s_N(\eps)=t_N(\eps)+\sqrt{\frac{2}{\kappa}\eps\,}$ so that in both case $s_i(\eps)= o(\sqrt{\eps})$. To get an equivalent of $s_i(\eps)$ in 0, we use a higher order Taylor's expansion of $\Gamma$ at $(0,0)$ writing only the terms useful in an asymptotic expansion of \eqref{eqn:bdry rough expansion} at order $\eps^{3/2}$ :
	\begin{equation} \label{eqn:first high taylor}
		\Gamma(\eps,t)
		=\eps \nu
		+ t \omega
		+ \frac{t^2}{2}\partial^2_{tt}\Gamma(0,0)
		+ \frac{t^3}{6}\partial^3_{ttt}\Gamma(0,0)
		+ o(\eps^{3/2}).
	\end{equation}
	In normal coordinates, $\partial^2_{tt}\Gamma(0,0)=0$ and $\partial^3_{ttt}\Gamma(0,0)=0$.
	Then if $\Gamma(\eps,t)\in\bdry$ for $(\eps,t)$ near $(0,0)$, by \eqref{eqn:first high taylor} and \eqref{eqn: h is parametrizing},
	\begin{equation}
		\eps = \frac{\kappa}{2} t^2 + \frac{j}{6} t^3 + \underset{\eps\to 0}{o}(\eps^{3/2})
	\end{equation}
	Simplifying this equation we get
	\begin{equation}
		s_i(\eps)+o(s_i(\eps)) = - \frac{j}{3\kappa^2}\eps + o(\eps),
	\end{equation}
	which ends the proof.
\end{proof}

Now we may state the lemma expressing the integrals of $f$ in the corner. Let $\sigma_1$ and $\sigma_N$ be the two unit speed curves starting from $p$ parametrizing the edges of $\Delta_1$ and $\Delta_N$ respectively which are not along $\bdry$. Define then $\theta_i\in(-\pi,\pi]$ such that $\dot\sigma_i(0)=w_{\theta_i}$.

\begin{lem}\label{lem:middle tiles}
    With the previous notations, for $\theta$ sufficiently close to $0$,
    \begin{enumerate}
    	\item If $\theta\ne0$, then 
    	\begin{multline*}
		\int_{\gamma^\eps_\theta\cap C}{f\,ds}=\ray(\gamma^\eps_\theta) + a_Nt_{N}^\theta(0) - a_1 t_1^\theta(0) + \\ \left(
			\frac{\inner{J_\theta(t_1^\theta(0))}{\nu}}{\inner{\dot\gamma_\theta(t_1^\theta(0))}{\nu}} 
			- \frac{\inner{J_\theta(t_N^\theta(0))}{\nu}}{\inner{\dot\gamma_\theta(t_N^\theta(0))}{\nu}} 
			+ a_1\beta_1 - a_N\beta_N \right) \,\eps + \underset{\eps\to 0}{o}(\eps).
	\end{multline*}
    	\item If $\theta=0$, then 
    	\begin{equation*}
        \int_{\gamma^\eps\cap C}{f\,ds}=\ray(\gamma^\eps)-\sqrt{\frac{2\,\eps}{\kappa}}(a_1+a_N) + \left[ a_1\beta_1-a_N\beta_N+ \frac{j}{3\kappa^2}(a_1-a_N) \right]\,\eps + \underset{\eps\to 0}{o}(\eps).
    \end{equation*}
    \end{enumerate}
    where $\beta_i=\frac{\cos(\theta_i-\theta)}{\sin(\theta_i-\theta)}$, $\kappa>0$ the curvature and $j$ the jerk of $\bdry$ at $p$ for $\dot\gamma(0)$.
\end{lem}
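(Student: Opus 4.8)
The statement decomposes the restricted integral $\int_{\gamma_\theta^\eps\cap C}f\,ds$ into the full ray transform $\ray(\gamma_\theta^\eps)$ minus the contributions coming from the two tangent tiles $\Delta_1,\Delta_N$, since by construction the geodesic $\gamma_\theta^\eps$ exits the corner sector $S$ exactly when it crosses into one of the two tangent tiles (all other tiles met by $\gamma_\theta^\eps$ near $p$ are corner tiles contained in $C$ for $\eps$ small). Concretely, $\int_{\gamma_\theta^\eps\cap C}f\,ds = \ray(\gamma_\theta^\eps) - \int_{(\gamma_\theta^\eps\cap\Delta_1)}a_1\,ds - \int_{(\gamma_\theta^\eps\cap\Delta_N)}a_N\,ds$, and each of the last two terms equals $a_i$ times the length of the portion of $\gamma_\theta^\eps$ lying in $\Delta_i$. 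So the whole proof reduces to computing the asymptotics in $\eps$ of those two arclengths. Each such portion of $\gamma_\theta^\eps$ runs between two boundary-type curves: on one side $\bdry$ itself, at the endpoint time $t_i^\theta(\eps)$ (or $t_i(\eps)$ when $\theta=0$); on the other side the non-boundary edge $\sigma_i$ of $\Delta_i$, met at some time $\tau_i^\theta(\eps)$.

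\textbf{Key steps.} First I would record, using the arclength parametrization of $\gamma_\theta^\eps$, that the length of $\gamma_\theta^\eps\cap\Delta_1$ equals $\tau_1^\theta(\eps) - (\text{time it enters } \Delta_1)$; since for small $\eps$ the geodesic enters $\Delta_1$ after leaving the corner region through $e_1$, the relevant quantity is actually $t_1^\theta(\eps)-\tau_1^\theta(\eps)$ where $\tau_1^\theta(\eps)$ is the time $\gamma_\theta^\eps$ crosses $\sigma_1$ and $t_1^\theta(\eps)$ the time it hits $\bdry$ — and symmetrically on the $\Delta_N$ side with signs reversed. The endpoint asymptotics are already done: \cref{lem:ending times theta non null} gives $t_i^\theta(\eps)$ for $\theta\neq 0$ and \cref{lem:ending times theta null} gives $t_i(\eps)$ for $\theta=0$ (the source of the $\sqrt{\eps}$ term and the jerk term). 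So the remaining work is the asymptotics of $\tau_i^\theta(\eps)$, the crossing time of the \emph{non-boundary} edge $\sigma_i$. Since $\sigma_i$ is a smooth curve through $p$ with $\dot\sigma_i(0)=w_{\theta_i}$ which is transverse to $\gamma_\theta$ at $p$ (the angle between them is $\theta_i-\theta\neq 0$), an implicit-function-theorem argument exactly parallel to that of \cref{lem:ending times theta non null} — taking $\rho$ a defining function for $\sigma_i$ instead of for $\bdry$ — shows $\tau_i^\theta$ is smooth in $\eps$ with $\tau_i^\theta(0)=0$ and a first derivative expressible through $J_\theta(0)=w_{\theta+\pi/2}$. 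An elementary computation in $T_pM$ (projecting the Jacobi field's initial value onto the normal of the line $\ell_{\theta_i}$ and dividing by the cosine of the transversality angle, just as in the IFT formula) yields $(\tau_i^\theta)'(0) = \beta_i = \cos(\theta_i-\theta)/\sin(\theta_i-\theta)$; this is where the quantities $\beta_i$ in the statement come from. Assembling: length of $\gamma_\theta^\eps\cap\Delta_1$ is $t_1^\theta(\eps)-\tau_1^\theta(\eps) = t_1^\theta(0) + \bigl(-\tfrac{\inner{J_\theta(t_1^\theta(0))}{\nu}}{\inner{\dot\gamma_\theta(t_1^\theta(0))}{\nu}} - \beta_1\bigr)\eps + o(\eps)$, and similarly for $\Delta_N$ with the opposite overall sign; multiplying by $a_1$, $a_N$ and subtracting from $\ray(\gamma_\theta^\eps)$ gives case (1). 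Case (2) is identical except the endpoint term $t_1(\eps)$ now carries the $-\sqrt{2\eps/\kappa}$ and $-\tfrac{j}{3\kappa^2}\eps$ contributions from \cref{lem:ending times theta null}; note the $\sqrt{\eps}$ terms from the two sides \emph{add} (since $t_1\sim+\sqrt{2\eps/\kappa}$, $t_N\sim-\sqrt{2\eps/\kappa}$ but the $\Delta_N$ arclength enters with a sign flip) producing the $-\sqrt{2\eps/\kappa}(a_1+a_N)$ term, while the jerk terms contribute $+\tfrac{j}{3\kappa^2}(a_1-a_N)\eps$, exactly as stated.

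\textbf{Main obstacle.} The conceptual content is light; the delicate point is bookkeeping the signs and the precise identification of which tile-crossing time plays which role as $\theta$ varies near $0$ and as $\eps\to 0^+$ — i.e. verifying that for $\eps$ small enough the geodesic $\gamma_\theta^\eps$ really does meet the tiles in the order corner-tiles-then-$\Delta_1$ (resp. $\Delta_N$) before hitting $\bdry$, so that the decomposition $\int_{\gamma_\theta^\eps\cap C}f = \ray(\gamma_\theta^\eps) - a_1\,|\gamma_\theta^\eps\cap\Delta_1| - a_N\,|\gamma_\theta^\eps\cap\Delta_N|$ is valid with no other boundary tiles contributing. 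This is a matter of combining the transversality of $\gamma_\theta$ to $\sigma_1,\sigma_N$ at $p$ with the continuity of the crossing times (available from \cite[Lemma~2.3]{U13} as already invoked in \cref{lem:ending times theta null}), plus the choice of $r$ in \cref{subsctn:corner def} ensuring each tile at $p$ cuts $B(p,r)$ into the expected number of pieces. Once that combinatorial picture is pinned down, the asymptotic expansions are a routine Taylor computation assembled from the three lemmas above.
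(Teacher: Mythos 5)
Your proposal is correct and follows essentially the same route as the paper: decompose $\ray(\gamma_\theta^\eps)$ into the corner integral plus $a_1(t_1(\eps)-s_1(\eps))+a_N(s_N(\eps)-t_N(\eps))$, obtain the edge-crossing times and their derivatives $\beta_i$ by the implicit function theorem (the paper simply cites the proof of \cite[Lemma~4.1]{ILS} for this, which is the same IFT computation you sketch), and insert the endpoint asymptotics from \cref{lem:ending times theta non null} and \cref{lem:ending times theta null}. The sign bookkeeping in both cases, including the addition of the two $\sqrt{\eps}$ contributions and the $\frac{j}{3\kappa^2}(a_1-a_N)$ jerk term, matches the paper's argument.
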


\begin{proof}
    The proof of \cite[Lemma~4.1]{ILS} gives two smooth times $s_1$ and $s_N$ parametrizing the intersection of $\gamma^\eps_\theta$ with $\sigma_1$ and $\sigma_N$ respectively. It gives also that these times satisfy
    \begin{align}
        & s_1'(0) = -\frac{\sin(\angle(\dot{\sigma_1}(0),w_{\theta+\pi/2}))}{\sin(\angle(\dot{\sigma_1}(0),w_\theta))}=\frac{\cos(\theta_1-\theta)}{\sin(\theta_1-\theta)},\\
        & s_N'(0) = -\frac{\sin(\angle(\dot{\sigma_N}(0),w_{\theta+\pi/2}))}{\sin(\angle(\dot{\sigma_N}(0),w_\theta))}=\frac{\cos(\theta_N-\theta)}{\sin(\theta_N-\theta)}.
    \end{align}
    
    We can apply \cref{lem:ending times theta non null} to obtain a formula of the ending times if $\theta\ne0$ and \cref{lem:ending times theta null} if $\theta=0$.
    
    Since we can write
    \begin{equation}
        \ray(\gamma^\eps_\theta)=\int_{\gamma^\eps_\theta\cap C}{f\,ds}+ a_1(t_1(\eps)-s_1(\eps)) + a_N(s_{N}(\eps)-t_{N}(\eps)),
    \end{equation}
    the conclusion of the lemma follows.
\end{proof}

The previous lemma allows us to recover $f$ near $p$ using the \cref{sctn:corner} of reconstruction at a corner. We therefore recovered $f$ in a neighborhood of the boundary  using the metric, the tiling, the ray transform and only the Jacobi fields of the variations used for the reconstruction.

     \section{Inside the manifold}\label{sctn:iteration}
         We wish now to reconstruct $f$ inside the manifold. The idea of the proof is the same as in \cite[Theorem~5.3]{ILS}: we use the strictly convex foliation to reconstruct $f$ in all $M$ by starting the reconstruction from the boundary and then iterate the argument at level sets of $\varphi$ using the values of $f$ already recovered. We now prove a local reconstruction lemma around a point of a level set of the foliation.

\begin{lem}\label{lem:reconstruction noass iteration step}
    Let $M$ be a compact nontrapping Riemannian two-dimensional manifold with strictly convex boundary. Let $f$ be a piecewise constant function on a regular tiling. Let $\varphi$ be a strictly convex foliation of $M$ and $\min\varphi< c \leq\max\varphi$. Denote $\Sigma=\{\varphi=c\}$ and let $p\in\Sigma\cap\inter{M}$. If the values of $f$ are known in $\{\varphi>c\}$, then one can reconstruct the values of $f$ near $p$ from the knowledge of $\ray$, the metric and the tiling.
\end{lem}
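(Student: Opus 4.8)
The plan is to reduce the reconstruction near $p$ to the corner reconstruction of \cref{sctn:corner}, exactly as in \cite[Theorem~5.3]{ILS}, by exploiting that $p$ lies on the strictly convex hypersurface $\Sigma = \{\varphi = c\}$ and that $f$ is already known on the side $\{\varphi > c\}$. First I would classify the tiles containing $p$ according to the trichotomy recalled in \cref{subsctn:foliation}: tiles of type (1), whose tangent cone meets $H_-$, have part of their interior in $\{\varphi > c\}$ near $p$ (using that $\Sigma$ is strictly convex so that $\{\varphi < c\}$ lies locally on the $H_-$ side), hence $f$ is already known on them; tangent tiles (type (2)) are handled by an argument parallel to \cref{lem:side tiles reconstruction}, replacing the role of $\bdry$ by the strictly convex hypersurface $\Sigma$; and corner tiles (type (3)), whose tangent cone sits in $H_+ \cup \{0\}$, are handled by the corner reconstruction \cref{lem:THE reconstruction in a corner}.

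For the corner tiles, the key point is to produce, for $\theta$ near $0$, a variation through geodesics $\gamma_\theta^\eps$ starting from $p$ with initial velocity $w_\theta$ and Jacobi field $J$ satisfying $J(0) = w_{\theta + \pi/2}$, together with the restricted integrals $I_C(\theta,\eps) = \int_{\gamma_\theta^\eps \cap C}{f\,ds}$ in a neighborhood of $(0,0)$; given those, \cref{lem:THE reconstruction in a corner} recovers $f$ on the corner tiles. The variation is the same one used near the boundary (parallel transport of $w_\theta$ along the geodesic through $w_{\theta + \pi/2}$), and it exists because $M$ has been extended if necessary so that $p \in \inter{\tilde M}$. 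To get $I_C(\theta,\eps)$ from the given data one writes, as in \cref{lem:middle tiles},
\begin{equation*}
    \ray(\gamma_\theta^\eps) = \int_{\gamma_\theta^\eps \cap C}{f\,ds} + \int_{\gamma_\theta^\eps \cap \{\varphi > c\}}{f\,ds},
\end{equation*}
where the second term involves only tiles of type (1) plus possibly tangent tiles, on which $f$ is either already known or has just been reconstructed, and whose portions of $\gamma_\theta^\eps$ are cut out by the known hypersurface $\Sigma$ and the known tiling edges; hence $\int_{\gamma_\theta^\eps \cap C}{f\,ds}$ is computable from $\ray$, the metric and the tiling.

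The main obstacle is bookkeeping the geometry near $p$ when $p$ is not a vertex of the tiling or when several tiles are involved: one must check that for $\theta$ and $\eps$ small enough the geodesic $\gamma_\theta^\eps$ crosses $\Sigma$ transversally near $p$ (which holds since $\dot\gamma_0(0) = w_0 \in H_0 = T_p\Sigma$ but the variation pushes the crossing point to first order into $\{\varphi < c\}$, using strict convexity of $\Sigma$ and $J(0) = w_{\theta+\pi/2}$), and that the decomposition of $\gamma_\theta^\eps$ into its part in $C$, its parts in tangent tiles, and its part in $\{\varphi > c\}$ depends on $(\theta,\eps)$ only through quantities that are either known or smoothly computable. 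This is precisely the analysis already carried out in \cite[Lemma~5.1]{ILS} and in \cref{subsctn:middle tiles}; the only change is that the known region is $\{\varphi > c\}$ rather than a boundary collar, and $\Sigma$ replaces $\bdry$ — but $\Sigma$ is a strictly convex hypersurface in the interior, so the tangent-tile reconstruction is even slightly simpler (no jerk term, no square-root singularity, since we are free to approach $p$ from inside $\tilde M$ on both sides). Putting these pieces together reconstructs $f$ on every tile containing $p$, hence near $p$.
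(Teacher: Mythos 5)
Your proposal follows the paper's own proof essentially step for step: the same trichotomy of tiles from \cref{subsctn:foliation}, the same treatment of tangent tiles via geodesics nearly tangent to $\Sigma$, the same variation from \cref{subsctn:middle tiles} with $\Sigma$ in place of $\bdry$ for the corner tiles, and the same computation of the restricted integrals $I_C(\theta,\eps)$ by subtracting the contributions of tiles with known values, justified by \cref{lem:geodesics in foliated manifolds}, before invoking \cref{lem:THE reconstruction in a corner}. The only (harmless) deviations are that for tangent tiles you use the derivative formula of \cref{lem:side tiles reconstruction} adapted to $\Sigma$, whereas the paper's proof divides the restricted integral by the computable length of $\gamma^\eps\cap\Delta$ at a fixed small $\eps$ (cf.\ \cref{rk:side tiles noncanonical method}; the paper itself uses your variant in the stability section), and a sign slip in one parenthetical: with the paper's convention $\nu$ points toward $\{\varphi<c\}$, so $\{\varphi>c\}$ lies on the $H_-$ side, which is precisely why type-(1) tiles already have known values.
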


\begin{rk}\label{rk:reconstruction noass}
	The proof of the previous lemma (see \cref{subsctn:proof reconstruction noass}) uses at $p$ a 2-parameter family of geodesics $\gamma_\theta^\eps$ such that for each fixed $\theta$, $\gamma_\theta^\eps$ is a smooth variation through geodesics in $\eps$ starting from the almost tangent geodesic $\gamma_\theta$ defined in \cref{subsctn:corner variation}.
\end{rk}

We first prove \cref{main:noass} thanks to this local argument.

\begin{proof}[Proof of \cref{main:noass}]
    There are only a finite number of simplices $\Delta_1,...,\Delta_m$ in the tiling. Denote by $c_1>...>c_K$ the distinct elements of the set $\{\max_{\Delta_i}\varphi,\,1\leq i\leq m\}$.
    
    Since $c_1=\max\varphi$,  by \cite[Lemma~2.5]{PSUZ16} we have that $\{\varphi= c_1\}\subset \bdry$. Therefore by the \cref{sctn:boundary} we can reconstruct $f$ on the simplices $\Delta_i$ such that $\max_{\Delta_i}\varphi=c_1$ and hence in the set $\{\varphi>c_2\}$. The values being given by the work of \cref{sctn:boundary}.
    
    Now we iterate the argument. We wish to reconstruct $f$ on each tile $\Delta_i$ such that $\max_{\Delta_i}\varphi=c_2$.
    To do so, let $p\in \{\varphi=c_2\} \cap\Delta_i$ with $\max_{\Delta_i}\phi = c_2$. If $p\in\bdry$ we can reconstruct $f$ near $p$ by \cref{sctn:boundary} and if $p$ is in $\text{int}(M)$ we can reconstruct $f$ near $p$ by \cref{lem:reconstruction noass iteration step}.
    
    Iterating this method, we reach the index $K$ and we reconstruct $f$ everywhere on $M$.
\end{proof}

\subsection{Proof of \cref{lem:reconstruction noass iteration step}} \label{subsctn:proof reconstruction noass}
Without any further assumptions on the manifold and the tiling, one can recover $f$ from the full knowledge of the tiling and the metric without almost any changes from the injectivity method of \cite{ILS}.

\begin{proof}
    Since the full tiling and the metric are known, one can compute the integrals of $f$ along a geodesic in any tile on which we know its value.
    
    To prove this local reconstruction lemma, we follow the proof of \cite[Lemma~5.1]{ILS}. Denote then $\nu$ the normal to $\Sigma$ at $p$ pointing toward $\{\phi<c\}$ and recall the three mutually exclusive types of tiles containing $p$ of \cref{subsctn:foliation}. Denote here again $H_\pm=\{u\in T_pM,\,\pm \,\inner{\nu}{u} >0\}$ and $H_0=T_p\Sigma$.
    
    If $\Delta$ is of the first type it intersects $\{\phi>c\}$ so $f$ is already recovered on $\Delta$.
    
    If $\Delta$ is a tangent tile, take $w\in H_0\cap C_p\Delta$ and define as in \cite[Lemma~5.1]{ILS} the family of geodesics $\gamma^\eps$ starting at $p$ with initial velocity $\dot\gamma^\eps(0)=w+\eps\nu$. 
    
    Since $\Sigma$ is strictly convex, for $\eps>0$ small enough, $\gamma^\eps$ touches back $\Sigma$ near $p$. Before the starting point and after touching $\Sigma$, $\gamma^\eps$ stays in $\{\phi>c\}$ by \cref{lem:geodesics in foliated manifolds} so the values of the tiles it goes through in this set are known. 
    
    It is also shown in \cite[Lemma~5.1]{ILS} that for $\eps>0$ small enough, before touching $\Sigma$, $\gamma^\eps$ must stay either
    \begin{itemize}
        \item only in $\Delta$ or
        \item in $\Delta\cup\tilde{\Delta}$ where $\tilde{\Delta}$ is the unique simplex such that $\Delta$ and $\tilde{\Delta}$ have a common edge tangent to $\Sigma$ at $p$. Moreover, the value of $f$ on $\tilde{\Delta}$ is known because $\tilde{\Delta}$ is of the first type.
    \end{itemize}

    In both cases we know the values of $f$ on $\Delta^C$ and
    we can compute $\int_{\gamma^\eps\cap\Delta}{f\,ds}=\int_{\gamma^\eps}{f\,ds}-\int_{\gamma^\eps\cap \Delta^C}{f\,ds}$. We therefore find the value of $f$ on $\Delta$ since it is constant on this tile and since the knowledge of the tiling and the metric allow us to compute the length of $\gamma^\eps$ in $\Delta$.

    We are left with the corner tiles which we recover thanks to \cref{sctn:corner}. If there are corner tiles, $p$ is a vertex. The geodesics $\gamma_\theta^\eps$ we use are the one constructed in \cref{subsctn:middle tiles} replacing $\bdry$ by $\Sigma$. They stay in $\{\phi>c\}$ outside the tiles having $p$ as a vertex by \cref{lem:geodesics in foliated manifolds} for small enough parameters. We can therefore compute the integrals $\int_{\gamma^\eps_\theta\cap C}{f\,ds}$ for $\eps$ and $\theta$ near $0$ and conclude by \cref{lem:THE reconstruction in a corner}.
    
\end{proof}

      \section{Simple manifolds and geodesic tilings}\label{sctn:simplegeod}
      	As we said in the introduction, we will now improve the local reconstruction result \cref{lem:reconstruction noass iteration step} in simple manifolds with geodesic tilings. The improvement is summed up in the following lemma.

\begin{lem}\label{lem:reconstruction simplegeod iteration step}
    Take the same notations and assumptions as in \cref{lem:reconstruction noass iteration step}. Assume in addition that $M$ is simple and that the tiling is geodesic. If the values of $f$ are known in $\{\varphi>c\}$, then one can reconstruct $f$ near $p$ from the knowledge of $\ray$, the metric and the tangent vectors to the edges of the tiling at their intersection point with the geodesics $\gamma_\theta$ defined at each vertex (see \cref{subsctn:corner variation}) for small enough $\theta$.
\end{lem}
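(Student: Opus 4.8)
The plan is to rerun the proof of \cref{lem:reconstruction noass iteration step} almost verbatim, and at each point where it invoked the \emph{full} knowledge of the tiling, to replace that use by a computation relying only on $\ray$, the metric, and the tangent vectors to the edges at their crossing points with the geodesics $\gamma_\theta$. Two mechanisms make this possible. First, since $M$ is simple there are no conjugate points and the exponential map from a point is a diffeomorphism onto its image, so any two distinct geodesics meet in at most one point; as the tiling is geodesic, each edge is a maximal geodesic arc, it meets a given $\gamma_\theta$ in at most one point, and it is completely determined by one of its points together with its tangent direction there. Hence from the crossing point $x$ of $\gamma_\theta$ with an edge $e$ and the given tangent vector of $e$ at $x$ we recover $e$, the vertices it joins, and — running $e$ back to the vertex $p$ — the angles $\theta_i$ of the corner edges at $p$ and the angles of the edges $\sigma_1,\sigma_N$ of the tangent tiles. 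Second, the Jacobi field $J_\theta$ of the variation $\eps\mapsto\gamma_\theta^\eps$ solves the Jacobi equation with initial data determined by $p$, $\theta$ and $\nu$, so it is computable from the metric alone; since \cref{lem:THE reconstruction in a corner} only needs $\partial_\eps I_C(\theta,0)$ and its first $N-1$ derivatives in $\theta$ at $0$, all the $\eps$-derivatives of intersection times that occur (as in \cref{lem:ending times theta non null}, \cref{lem:ending times theta null} and \cref{lem:middle tiles}) can be written through $J_\theta$ and the edge directions, without further knowledge of the tiling.

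I would then argue tile-type by tile-type, as in the proof of \cref{lem:reconstruction noass iteration step}. Tiles of the first type need nothing new, $f$ being already known on $\{\varphi>c\}$. For a tangent tile $\Delta$ I would repeat \cref{lem:time side tiles reconstruction} and \cref{lem:side tiles reconstruction} with $\bdry$ replaced by $\Sigma$: with the geodesic $\gamma^\eps$ having $\dot\gamma^\eps(0)=w+\eps\nu$ for $w\in H_0\cap C_p\Delta$, the part of $\gamma^\eps$ in $\{\varphi\le c\}$ near $p$ lies in $\Delta$, or in $\Delta\cup\tilde\Delta$ with $\tilde\Delta$ of the first type, and the return time to $\Sigma$ has derivative $2/\kappa$ at $\eps=0$ where $\kappa>0$ is the geodesic curvature of $\Sigma$ at $p$. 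One then needs the restricted integral $\int_{\gamma^\eps\cap\{\varphi\le c\}}f\,ds=\ray(\gamma^\eps)-\int_{\gamma^\eps\cap\{\varphi>c\}}f\,ds$, and the second term is computable since $f$ is known on $\{\varphi>c\}$ and, by the first mechanism above (propagated to the needed order in $\eps$ by the computable Jacobi field), the edge-crossings of $\gamma^\eps$ in $\{\varphi>c\}$ are determined by the given data. Dividing by $\eps$, letting $\eps\to0$ and subtracting the already-known contribution of $\tilde\Delta$ if present recovers the value of $f$ on $\Delta$.

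For the corner tiles I would invoke \cref{lem:THE reconstruction in a corner}; its input is $I_C(\theta,\eps)=\int_{\gamma_\theta^\eps\cap C}f\,ds$ near $(0,0)$ — really only $\partial_\eps I_C(\theta,0)$ and finitely many $\theta$-derivatives — together with the corner angles $\theta_i$ at $p$, which are supplied by the first mechanism. To obtain $\partial_\eps I_C(\theta,0)$ I would run the proof of \cref{lem:middle tiles} with $\bdry$ replaced by $\Sigma$: write $I_C(\theta,\eps)=\ray(\gamma_\theta^\eps)-\int_{\gamma_\theta^\eps\cap\{\varphi>c\}}f\,ds-\int_{\gamma_\theta^\eps\cap(\Delta_1\cup\Delta_N)}f\,ds$; express the intersection times of $\gamma_\theta^\eps$ with $\Sigma$ via \cref{lem:ending times theta non null} for $\theta\ne0$ and \cref{lem:ending times theta null} for $\theta=0$, and with $\sigma_1,\sigma_N$ via the angle formula $\beta_i=\cos(\theta_i-\theta)/\sin(\theta_i-\theta)$; and note that every ingredient — $J_\theta$, the directions $\dot\gamma_\theta$ and $\dot\sigma_i$ at the crossings, $\nu$, $\kappa$ and the jerk $j$ of $\Sigma$ at $p$ — is available from the metric and the given tangent vectors, while the $\{\varphi>c\}$ part and its $\eps$-derivative are handled exactly as in the tangent-tile step. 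Then \cref{lem:conical functions} (inversion of a Vandermonde-type matrix) returns the remaining values and finishes the reconstruction near $p$.

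The main obstacle, and the part deserving the most care, is the first-order-in-$\eps$ bookkeeping of all these restricted integrals from data available only at $\eps=0$: one must verify that for $\eps$ small the geodesic $\gamma_\theta^\eps$ crosses the same edges in $\{\varphi>c\}$ as $\gamma_\theta$ (so the given tangent vectors, transported by the computable Jacobi field, genuinely suffice), that the contributions of the moving endpoints on $\bdry$ and of the interfaces between $\{\varphi>c\}$-tiles and the tiles at $p$ only involve values already reconstructed, and that the $\sqrt\eps$ singularity and jerk correction at $\theta=0$ from \cref{lem:ending times theta null} do not obstruct extracting $\partial_\eps I_C(\theta,0)$ for $\theta\ne0$ and passing to its limit as $\theta\to0$.
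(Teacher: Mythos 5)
Your high-level strategy for the corner tiles is the right one and matches the paper's intent: subtract the already-known contributions from $\ray(\gamma_\theta^\eps)$, differentiate in $\eps$ at $0$, express the derivatives of the crossing times through $J_\theta$ and the edge tangent vectors, and feed the result into \cref{lem:THE reconstruction in a corner}. But the step you yourself flag as ``the main obstacle'' is not a detail to be checked — it is the entire content of the paper's argument, and as stated your proposal does not close it. The paper proves (Lemmas \ref{lem:integral if nontangential nonvanishing}–\ref{lem:finite number of meetings}) that $\int_{\gamma_\theta^\eps}f\,ds=\sum_i a_i\bigl(t_{i+1}(\eps)-t_i(\eps)\bigr)$ with finitely many, \emph{ordered}, smooth crossing times whose derivatives are given by \eqref{eqn:derivative s t}, and this requires two hypotheses you do not secure: (i) no edge meets $\gamma_\theta$ tangentially unless it lies along it (this you essentially get from geodesic uniqueness), and (ii) the Jacobi field $J_\theta$ \emph{never vanishes} along $\gamma_\theta$. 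Point (ii) is where simplicity is really used: the paper invokes \cref{lem:existence of jacobi simple} to \emph{choose} $D_tJ_\theta(0)$ so that $J_\theta$ vanishes only in the extension $\tilde M\setminus M$, which makes $\Gamma_\theta$ a diffeomorphism and keeps the crossing times ordered and the formula \eqref{eqn:derivative s t} meaningful. Your proposal instead takes the specific parallel-transport variation of \cref{subsctn:middle tiles}, whose Jacobi field (with $D_tJ_\theta(0)=0$) can perfectly well vanish inside a simple manifold of positive curvature; at such a point the tube $\Gamma_\theta((0,\eps_0)\times[a,b])$ is not embedded, the ordering of the crossing times can break down, and the ``propagated to the needed order in $\eps$ by the computable Jacobi field'' bookkeeping fails. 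You never mention the non-vanishing requirement, so this is a genuine missing idea rather than an omitted routine verification.

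Two further points where the paper's route is different and cleaner. First, it observes that for a geodesic tiling every tangent tile automatically intersects $\{\varphi>c\}$: its tangent direction in $H_0$ is necessarily an edge direction, that edge is a geodesic tangent to the strictly convex $\Sigma$ at $p$, hence it immediately enters $\{\varphi>c\}$. So your entire tangent-tile reconstruction step is vacuous here (and had it not been, your formula $t'(0)=2/\kappa$ with $\kappa$ the curvature of $\Sigma$ would not be the right constant for a tile bounded by a geodesic edge). Second, the paper never splits the integral along $\Sigma$, only along the edges; since geodesic edges are crossed transversally, \emph{all} crossing times are smooth in $\eps$ and the $\sqrt{\eps}$ singularity and jerk correction of \cref{lem:ending times theta null} never arise. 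Your decomposition $I_C=\ray-\int_{\{\varphi>c\}}-\int_{\Delta_1\cup\Delta_N}$ reintroduces these singularities artificially and would force you to verify their cancellation by hand.
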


\begin{rk}\label{rk:reconstruction simplegeod}
	Constrasting with \cref{rk:reconstruction noass}, the proof of the previous lemma (see \cref{subsctn:proof reconstruction simplegeod}) uses at $p$ the 1-parameter family of geodesics $\gamma_\theta$ and only a non-vanishing Jacobi field $J_\theta$ along $\gamma_\theta$ instead of the full variation through geodesics in $\eps$.
\end{rk}

The key lemma to prove the previous result is a formula which expresses the integrals of $f$ over a variation through geodesics in terms of the meeting times of these geodesics with the edges of the tiling. This is only possible under some assumptions on the tiling and the variation. The reconstruction will then use only the initial derivatives of these meeting times which can be expressed thanks to the Jacobi field of the variation. We will finally show that the geometric conditions on the manifold and the tiling imply that the assumptions are satisfied.

\subsection{Parametrization of the integrals over a variation through geodesics}

\subsubsection{Result}

Let $p\in \inter{M}$ and $w\in T_pM$ be any unit tangent vector at $p$.
Denote $\gamma$ the maximal geodesic starting from $p$ with velocity $w$.

Extend smoothly the metric outside of $M$ around the meeting points of $\gamma$ with $\partial M$ so that we can define $\Gamma : (-\eps_0,\eps_0)\times[a,b] \to M$ a smooth normal variation through geodesics touching the boundary starting from $\gamma$. Denote $\gamma^\eps = \Gamma(\eps,\cdot)$ the geodesic parametrized by $\eps$. Denote $J$ the Jacobi field associated to $\Gamma$ and denote 
$\tubb[\eps_0] =\Gamma((0,\eps_0)\times[a,b])$ and 
$\tub[\eps_0] =\Gamma([0,\eps_0)\times[a,b])$.

\begin{lem}\label{lem:integral if nontangential nonvanishing}
    If 
    \begin{enumerate}[label=(\roman*)]
        \item \label{ass:nontangential} every interior edge is either entirely along $\gamma$ or never tangential to it and
        \item \label{ass:nevervanishing} $\forall t\in[a,b],\,J(t)\ne 0$,
    \end{enumerate}
    then there exist $\eps_1>0$, $N\in\N$, ordered smooth functions $t_i : [0,\eps_1)\to[a,b]$ for $i\in\libr1,N+1\ribr$ and $(a_i)_{i\in\libr1,N\ribr}$ values of $f$ associated to tiles $\Delta_i$ such that for all $\eps\in(0,\eps_1)$,
    \begin{equation*}
		f\circ\gamma^\eps\equiv a_i \text{ on } (t_{i}(\eps),t_{i+1}(\eps)).
	\end{equation*}
    so in particular,
    \begin{equation*}
        \forall \eps\in(0,\eps_1), \intef{\eps}=\sum_{i=1}^{N}{a_i(t_{i+1}(\eps)-t_{i}(\eps))}.
    \end{equation*}
\end{lem}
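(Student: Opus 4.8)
The goal is to show that along a nearby geodesic $\gamma^\eps$ (for small $\eps>0$), the function $f\circ\gamma^\eps$ is a step function whose jump points depend smoothly on $\eps$. The natural strategy is to analyze the finitely many edges of the tiling that $\gamma$ meets, and to track how the intersection of each such edge with $\gamma^\eps$ moves as $\eps$ varies. Since $\gamma([a,b])$ is compact and the tiling is locally finite, $\gamma$ meets only finitely many tiles and finitely many edges; enlarging $M$ slightly by extending the metric, I may assume everything happens in the interior, so the tiling near $\gamma$ is genuinely $n$-dimensional (here $2$-dimensional) with interior edges.

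First I would classify the edges $e$ that $\gamma$ meets. By assumption \ref{ass:nontangential}, such an edge is either (a) contained in $\gamma$ (an edge running along a whole geodesic segment of $\gamma$), or (b) transverse to $\gamma$ at each of its intersection points with $\gamma$. For type (b): at a transverse crossing point $\gamma(t_0)$, I set up a defining function $\rho_e$ for the edge near that point and define $h(\eps,t)=\rho_e(\gamma^\eps(t))$; transversality gives $\partial_t h(0,t_0)\ne 0$, so the implicit function theorem produces a smooth crossing time $t_e(\eps)$ near $t_0$, with $t_e'(0)$ expressible via the Jacobi field $J$ (as in the proofs of Lemmas~\ref{lem:time side tiles reconstruction} and \ref{lem:ending times theta non null}). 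For type (a): this is the delicate case — an edge lying along a segment of $\gamma$ is where assumption \ref{ass:nevervanishing} enters. Because $J(t)\ne 0$ everywhere, $\gamma^\eps$ genuinely separates from $\gamma$ to first order in $\eps$ for every $t$; so for $\eps>0$ small, $\gamma^\eps$ does not run along the edge but crosses the tiling structure near that segment cleanly. I would argue that near such a segment $\gamma^\eps$ lies entirely on one side, hence the segment contributes (for $\eps>0$) either no jump or a single transverse-type crossing at each endpoint of the segment, handled as in case (b).

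Next I would assemble these local data into the global statement. Shrinking to a single $\eps_1>0$ that works simultaneously for the finitely many relevant edges, I obtain finitely many smooth functions whose values at $\eps$ are the crossing parameters; after discarding coincidences and reordering, these give ordered smooth functions $t_1(\eps)<\dots<t_{N+1}(\eps)$ in $[a,b]$, with $t_1\equiv a$ and $t_{N+1}\equiv b$ (or the outermost genuine crossings), such that on each open interval $(t_i(\eps),t_{i+1}(\eps))$ the geodesic $\gamma^\eps$ stays in the interior of a single tile $\Delta_i$. The point is that between consecutive crossings $\gamma^\eps$ meets no edge, hence — since $f$ is constant on tile interiors and vanishes on the $(n-1)$-skeleton, which has measure zero along $\gamma^\eps$ — $f\circ\gamma^\eps$ is a.e.\ equal to the constant $a_i$ on that interval. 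That the tile $\Delta_i$ (and hence $a_i$) is the \emph{same} for all small $\eps$ follows from continuity: as $\eps\to 0$ the configuration converges to that of $\gamma$ itself, so for $\eps_1$ small enough the combinatorial type is constant. Integrating $f\circ\gamma^\eps$ over $[a,b]$ then gives $\intef{\eps}=\sum_{i=1}^N a_i\big(t_{i+1}(\eps)-t_i(\eps)\big)$.

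I expect the main obstacle to be case (a): rigorously handling interior edges that lie \emph{along} $\gamma$, and showing that the non-vanishing of $J$ forces $\gamma^\eps$ (for $\eps>0$) to detach from such an edge in a controlled, one-sided way, so that the number of crossings and the tile labels stabilize for small $\eps$. This is precisely where both hypotheses \ref{ass:nontangential} and \ref{ass:nevervanishing} are indispensable: tangential edges not along $\gamma$ would produce square-root-type crossing times (cf.\ \cref{lem:ending times theta null,rk:sqrt singularity at boundary}) rather than smooth ones, and a Jacobi field vanishing at some $t$ would let $\gamma^\eps$ osculate $\gamma$ there, destroying the clean separation. A secondary technical point is the bookkeeping needed to merge the finitely many locally-defined smooth crossing functions into a single globally ordered family on a common interval $(0,\eps_1)$, which is routine once the local analysis is in place.
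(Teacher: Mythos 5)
Your local analysis is essentially the paper's: transverse crossings are parametrized by smooth times via the implicit function theorem (\cref{lem:parametrizations nontangential edges}), finiteness of the crossing set follows from a compactness argument (\cref{lem:finite number of meetings}), and the non-vanishing of $J$ is indeed what prevents $\gamma^\eps$ from osculating an edge lying along $\gamma$. There is, however, a genuine gap in the global assembly that you dismiss as ``routine bookkeeping'': you never address the possibility that $\gamma$ \emph{self-intersects}. The lemma is stated for an arbitrary maximal geodesic on a nontrapping manifold (simplicity of $M$ is only assumed later, in \cref{lem:reconstruction simplegeod iteration step}), and such a geodesic can cross itself transversally. This breaks several of your steps. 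The completeness of the parametrization --- the claim that \emph{every} intersection of $\gamma^\eps$ with an edge is one of the locally constructed smooth crossing times --- is obtained in the paper from the fact that $\Gamma$ is a diffeomorphism onto its image (\cite[Lemma~7.3]{KS}), which requires $\gamma$ to be injective \emph{in addition to} hypothesis \ref{ass:nevervanishing}; without injectivity, $\gamma^\eps$ may re-enter a neighborhood $\sigma_e(U_p)$ at a time far from $V_p$ and produce an unparametrized crossing, and distinct crossing functions are no longer guaranteed to stay distinct, so the ordering argument fails. The same diffeomorphism property is what makes your case (a) rigorous: for $\eps>0$ the curve $\Gamma(\eps,\cdot)$ is disjoint from $\Gamma(0,\cdot)=\gamma$, hence never meets an edge contained in $\gamma$. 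Note also that an edge lying along one arc of $\gamma$ may be crossed transversally by another arc of $\gamma$ passing through the same region, so your dichotomy (a)/(b) is not even a well-defined partition of the edges before localizing along $\gamma$.

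The paper closes this gap by a preliminary reduction: since $M$ is nontrapping, $\gamma$ has no loops, so by \cite[Lemma~7.2]{KS} it has only finitely many self-intersection times; one cuts $[a,b]$ into finitely many subintervals on which $\gamma$ is simple, proves the statement on each piece (\cref{lem:integral if nontangential nonvanishing and simple}), and concatenates the resulting ordered families of crossing times. You should add this reduction, and replace the informal ``detaches to one side'' argument for edges along $\gamma$ by the statement that $\Gamma$ restricted to each simple piece is a diffeomorphism for $\eps_0$ small. With those two ingredients the rest of your outline goes through as written.
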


We next state this lemma in the simple case where the variation through geodesics is a diffeomorphism and prove that this restricted result allows us to prove the previous one. The next lemma is proved in the \cref{subsubsctn:Proof of nontnonvsimpl}.

\begin{lem}\label{lem:integral if nontangential nonvanishing and simple}
    Take the same assumptions as in \cref{lem:integral if nontangential nonvanishing} and suppose moreover that $\gamma$ is simple. Then the conclusion of \cref{lem:integral if nontangential nonvanishing} holds.
\end{lem}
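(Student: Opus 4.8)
The statement to prove is Lemma~\ref{lem:integral if nontangential nonvanishing and simple}: assuming in addition to hypotheses \ref{ass:nontangential} and \ref{ass:nevervanishing} that $\gamma$ is simple, the conclusion of Lemma~\ref{lem:integral if nontangential nonvanishing} holds. Since $\gamma$ is simple, the variation $\Gamma$ is, for $\eps_0$ small enough, a diffeomorphism onto its image; this is where the simplicity hypothesis enters. The plan is to use this diffeomorphism to translate the combinatorial picture of how the geodesics $\gamma^\eps$ cross the finitely many edges of the tiling into a picture in the parameter rectangle $(-\eps_0,\eps_0)\times[a,b]$, and then check that, after possibly shrinking $\eps_0$ to some $\eps_1$, the crossing times organize into finitely many smooth functions $t_i(\eps)$ that stay ordered.

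First I would set up the diffeomorphism. By simplicity of $\gamma$ and compactness of $[a,b]$, after shrinking $\eps_0$ the map $\Gamma$ is an embedding, so $\tub[\eps_0]=\Gamma([0,\eps_0)\times[a,b])$ is a genuine neighborhood strip of $\gamma$ and $\Gamma^{-1}$ is well-defined and smooth on it. Next I would use local finiteness of the tiling (Definition~\ref{def:regular tiling}) and compactness of $\tub[\eps_0]$ to get that only finitely many tiles $\Delta$, hence finitely many edges, meet $\tub[\eps_0]$. Pulling each such edge back by $\Gamma^{-1}$ gives a finite collection of smooth arcs in the parameter rectangle. For each such edge $e$, hypothesis \ref{ass:nontangential} says $e$ is either contained in $\gamma$ or never tangent to it: in the first case its preimage is (a subset of) $\{t\}$-independent, i.e. lies in $\eps=0$ or is the whole segment $\{0\}\times[\cdot]$, so after shrinking $\eps_0$ it contributes nothing for $\eps>0$; in the second case the preimage arc is transverse to every vertical line $\{\eps\}\times[a,b]$ near $\eps=0$, so by the implicit function theorem it is locally a graph $t=\tau_e(\eps)$ of a smooth function. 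There may be finitely many vertices on $\gamma$ too, where two edges meet; again by local finiteness and a shrinking argument these occur only at finitely many values of $t$ on $\gamma$, and near each such value the two incident edges give two graphs $\tau_{e}(\eps),\tau_{e'}(\eps)$ with $\tau_e(0)=\tau_{e'}(0)$ but distinct slopes (distinct tangent directions), so after shrinking they do not cross again for $\eps>0$.

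Then I would assemble these graphs. Listing all functions $\tau_e$ obtained above, their values $\tau_e(0)$ are finitely many points in $[a,b]$; since $f\circ\gamma$ is piecewise constant with jumps exactly when $\gamma$ crosses an edge not contained in it, these points (together with $a$ and $b$) subdivide $[a,b]$ into finitely many open intervals on which $f\circ\gamma$ is constant, say with value $a_i$ on the $i$-th interval associated to a tile $\Delta_i$. For $\eps$ small the graphs $\tau_e(\eps)$ remain ordered the same way (they start at distinct heights, or if they start equal they separate with distinct slopes), so I can relabel them in increasing order as $t_1(\eps)<\dots<t_{N+1}(\eps)$ on a common interval $[0,\eps_1)$, with $t_1\equiv a$, $t_{N+1}\equiv b$ after padding if needed. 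By construction $\gamma^\eps$ crosses no edge (outside those along $\gamma$, which contribute measure zero) on $(t_i(\eps),t_{i+1}(\eps))$, so $f\circ\gamma^\eps$ is constant there; continuity in $\eps$ and the value at $\eps=0$ force that constant to be $a_i$. This gives $f\circ\gamma^\eps\equiv a_i$ on $(t_i(\eps),t_{i+1}(\eps))$, and integrating yields $\intef{\eps}=\sum_{i=1}^N a_i(t_{i+1}(\eps)-t_i(\eps))$.

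\textbf{Main obstacle.} The delicate point is handling the vertices of the tiling that lie on $\gamma$, and more generally ensuring that the finitely many graphs $\tau_e(\eps)$ do not develop new crossings or tangencies with vertical lines as $\eps\to 0$: one must argue that although two edges meeting at a vertex $q=\gamma(t_0)$ give preimage arcs through the same point $(0,t_0)$, hypothesis \ref{ass:nontangential} (applied to both edges) forces these arcs to be transverse graphs with \emph{different} slopes at $(0,t_0)$ — the slopes being determined by the angles between $\gamma$ and the two edges, which are nonzero and distinct — hence they genuinely separate for $\eps>0$. Making this quantitative enough to extract a single $\eps_1$ that works simultaneously for all finitely many edges and vertices, while also keeping $\Gamma$ a diffeomorphism and $\tub[\eps_1]$ inside the region where only the listed tiles appear, is the bookkeeping core of the argument; everything else is the implicit function theorem plus compactness.
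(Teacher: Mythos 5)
Your overall strategy is the same as the paper's: make $\Gamma$ a diffeomorphism for small $\eps_0$, pull the finitely many relevant edges back to the parameter rectangle, parametrize the crossings with non-tangential edges by smooth functions of $\eps$ via the implicit function theorem, order these functions, and read off the constants $a_i$. Two small imprecisions first: the diffeomorphism claim does not follow from simplicity of $\gamma$ and compactness alone — the non-vanishing of $J$ from \ref{ass:nevervanishing} is exactly what makes $\Gamma$ an immersion (the paper cites a lemma using injectivity of $\gamma$, normality of the variation \emph{and} $J\neq 0$), so you should say where \ref{ass:nevervanishing} enters; and for $\eps>0$ the geodesic $\gamma^\eps$ does not meet the tangential edges at all (they lie on $\gamma=\gamma^0$ and $\Gamma$ is injective), which is the correct reason they are harmless, not a "measure zero" argument.

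The genuine gap is in your treatment of vertices on $\gamma$, which you yourself single out as the delicate point. You claim that two edges incident to a vertex $q=\gamma(t_0)$ yield preimage graphs through $(0,t_0)$ with \emph{distinct slopes} because they have distinct tangent directions, hence separate for $\eps>0$. This implication is false: two distinct edges of a regular tiling emanating from $q$ have distinct tangent \emph{rays}, but may span the same tangent \emph{line} (opposite directions), and then by the analogue of \eqref{eqn:derivative s t} the two crossing-time functions have \emph{equal} derivatives at $\eps=0$ (the slope depends only on the tangent line, not the direction), so your separation argument yields nothing; also a vertex generally carries more than two incident edges. The conclusion survives, but for a different reason, which is the one the paper uses: shrink $\eps_0$ so that the open strip $\tubb$ contains no vertex of the tiling; since two distinct edges of a regular tiling intersect only at common vertices and $\Gamma$ is injective, two functions $\tps$ and $\tps[e',p']$ can never agree at any $\eps\in(0,\eps_1)$, and the intermediate value theorem then fixes their order on all of $(0,\eps_1)$. (In the opposite-direction case this is reflected in the fact that exactly one of the two edges points into the half-strip $\{\eps>0\}$, so only one of the two half-graphs is present there.) You should replace the slope argument by this global non-intersection argument, or at the very least treat the equal-slope case, and you also need to carry out explicitly — rather than only mention as an obstacle — the shrinking that guarantees \emph{every} intersection of $\gamma^\eps$ with an edge for $\eps\in(0,\eps_1)$ is one of the listed times $t_i(\eps)$, since otherwise the constancy of $f\circ\gamma^\eps$ on $(t_i(\eps),t_{i+1}(\eps))$ is not justified.
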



\begin{proof}[Proof of \cref{lem:integral if nontangential nonvanishing}]
    First, $\gamma$ has no loops because $M$ is a nontrapping manifold. Therefore, by \cite[Lemma~7.2]{KS} one can deduce that there are only finitely many times $a\leq \tau_1<...<\tau_m\leq b$ such that $\gamma$ intersects itself at $\gamma(\tau_j)$. Taking for every $j\in\libr1,m-1\ribr,\,s_j\in(\tau_j,\tau_{j+1})$ one has that $\gamma$ is simple on $[s_j,s_{j+1}]$. Therefore by \cref{lem:integral if nontangential nonvanishing and simple}, denoting the variation through geodesics restricted in time $\Gamma_j:(-\eps_0,\eps_0)\times[s_j,s_{j+1}]\to M$ and $\gamma_j^\eps = \Gamma_j(\eps,\cdot)$, we have that there exist $\eps_1>0$, $N\in\N$ and ordered smooth functions $t_{i,j}:[0,\eps_1)\to[s_j,s_{j+1}]$ and constants $a_{i,j}\in\R$ such that for all $\eps\in(0,\eps_1)$,
    \begin{equation}
		f\circ\gamma^\eps\equiv a_{i,j} \text{ on } (t_{i,j}(\eps),t_{i+1,j}(\eps))
    \end{equation}
    and the conclusion follows since the $s_j$ are ordered.
\end{proof}

\subsubsection{Proof of the \cref{lem:integral if nontangential nonvanishing and simple}}
\label{subsubsctn:Proof of nontnonvsimpl}

The global idea is to construct the functions $t_i$ such that $t_i(\eps)$ is the meeting time of $\gamma^{\eps}$ with an edge of the tiling. We must make sure that we can define such smooth functions and in particular that for $\eps>0$ small enough the geodesics $\gamma^\eps$ always meet the same edges and in the same order.

In the next lemma we construct the function parametrizing the intersection time of the variation through geodesics with a non tangential edge of the tiling. If $e$ is an edge of the tiling, there is a diffeomorphism $\sigma_e : [0,1]\to M$ parametrizing $e$ by definition of a regular tiling.

\begin{lem}\label{lem:parametrizations nontangential edges}
    Suppose that $\gamma$ is simple and let $e$ be a non tangential edge. Suppose $e$ intersects $\gamma$ at a point $p=\gamma(t_0)=\sigma_e(s_0)$. Then there exist $\eps_p\in(0,\eps_0)$, $U_p\times  V_p$ neighborhood of $(s_0,t_0)$ and smooth functions $\sps,\tps : (-\eps_p,\eps_p)\to[a,b]$ such that
    \begin{equation*}\label{eqn:time reduction}
        U_p=\sps((-\eps_p,\eps_p)) 
    \end{equation*}
    and for all $\eps\in(-\eps_p,\eps_p)$ and for all $(s,\,t)\in U_p\times  V_p$ we have
    \begin{equation*}\label{eqn:IFT}
        \sigma_e(s)=\Gamma(\eps,t) \Leftrightarrow (s,t)=(\sps(\eps),\tps(\eps))
    \end{equation*}
\end{lem}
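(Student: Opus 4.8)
\textbf{Proof proposal for Lemma~\ref{lem:parametrizations nontangential edges}.}

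The plan is to realize $s_{e,p}$ and $t_{e,p}$ as the solution of the equation $\sigma_e(s) = \Gamma(\eps,t)$ via the implicit function theorem, the only subtlety being that both $s$ and $t$ are implicit functions of $\eps$, so one first has to express $s$ in terms of $t$ (or vice versa). Concretely, I would pick local coordinates near $p$ and consider the smooth map $\Phi : (s,t,\eps) \mapsto \sigma_e(s) - \Gamma(\eps,t) \in \R^2$, which vanishes at $(s_0,t_0,0)$. Its differential in $(s,t)$ at that point is the $2\times 2$ matrix with columns $\dot\sigma_e(s_0)$ and $-\dot\gamma(t_0)$ (using $\partial_t\Gamma(0,t_0)=\dot\gamma(t_0)$). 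These two tangent vectors are \emph{linearly independent} precisely because $e$ is a non-tangential edge meeting $\gamma$ at $p$ — this is exactly where assumption~\ref{ass:nontangential} enters — so the matrix is invertible. The implicit function theorem then yields $\eps_p \in (0,\eps_0)$, a neighborhood $U_p\times V_p$ of $(s_0,t_0)$, and smooth functions $s_{e,p}, t_{e,p} : (-\eps_p,\eps_p) \to [a,b]$ with $(s_{e,p}(0),t_{e,p}(0)) = (s_0,t_0)$ such that on $(-\eps_p,\eps_p)\times U_p\times V_p$ the equation $\Phi(s,t,\eps)=0$ holds if and only if $(s,t) = (s_{e,p}(\eps), t_{e,p}(\eps))$, which is the stated biconditional after unwinding $\Phi=0 \Leftrightarrow \sigma_e(s)=\Gamma(\eps,t)$.

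It remains to arrange the first displayed identity $U_p = s_{e,p}((-\eps_p,\eps_p))$, i.e.\ to make $s_{e,p}$ a parametrization of (a base neighborhood) $U_p$. For this I would compute $s_{e,p}'(0)$ from the implicit function theorem: differentiating $\sigma_e(s_{e,p}(\eps)) = \Gamma(\eps, t_{e,p}(\eps))$ at $\eps=0$ gives $s_{e,p}'(0)\,\dot\sigma_e(s_0) = J(t_0) + t_{e,p}'(0)\,\dot\gamma(t_0)$, and projecting onto the direction transverse to $\dot\gamma(t_0)$ (the same transversality as above) shows that $s_{e,p}'(0) = 0$ would force the component of $J(t_0)$ transverse to $\dot\gamma(t_0)$ to vanish; since $\gamma^\eps$ is a genuine (normal) variation through geodesics and $J(t_0) \ne 0$ by assumption~\ref{ass:nevervanishing}, the Jacobi field $J$ is transverse to $\dot\gamma$ wherever it is non-zero, so $s_{e,p}'(0) \ne 0$. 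Hence $s_{e,p}$ is a local diffeomorphism onto its image near $0$, and after shrinking $\eps_p$ and replacing $U_p$ by $s_{e,p}((-\eps_p,\eps_p))$ (and shrinking $V_p$ accordingly so the biconditional is preserved) we get exactly the claimed equality, which completes the proof.

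The main obstacle is pinning down which non-degeneracy each hypothesis supplies and checking they are independent: assumption~\ref{ass:nontangential} is what makes the $(s,t)$-differential of $\Phi$ invertible and therefore makes $t_{e,p}$, $s_{e,p}$ exist and be smooth, whereas assumption~\ref{ass:nevervanishing} (together with $\Gamma$ being a normal variation, so $J \perp \dot\gamma$, hence $J$ non-zero implies $J$ transverse) is what forces $s_{e,p}'(0)\ne 0$ and hence lets us upgrade $s_{e,p}$ from "smooth function" to "parametrization of $U_p$". One should also be careful that the edge-parametrization $\sigma_e$ is defined on $[0,1]$, so if $s_0 \in \{0,1\}$ one works with one-sided neighborhoods, but this does not affect the argument since a vertex there is handled by the corner analysis of \cref{sctn:corner} rather than by this lemma; in the interior-edge case $s_0\in(0,1)$ and everything above goes through verbatim.
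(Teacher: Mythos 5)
Your proposal is correct and follows essentially the same route as the paper: the same implicit function theorem applied to $(s,t)\mapsto\sigma_e(s)-\Gamma(\eps,t)$, with non-tangentiality giving invertibility of the $(s,t)$-differential, and the non-vanishing of the (normal) Jacobi field giving $s_{e,p}'(0)\ne 0$ so that $s_{e,p}$ parametrizes $U_p$ after shrinking. The only minor divergence is at the edge endpoints: the paper simply extends $\sigma_e$ smoothly to an open interval containing $[0,1]$ (and extends $\gamma$ past $\partial M$ if needed) so the IFT applies verbatim, rather than deferring the $s_0\in\{0,1\}$ case to the corner analysis as you suggest.
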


\begin{proof}
    For the sake of the implicit function theorem, if $s_0 = 0$ or $1$ then extend smoothly $\sigma_e$ to a open interval containing $[0,1]$ so that we can always assume that $\sigma_e$ is defined in a neighborhood of $s_0$. For the same reason, extend $\gamma$ to a larger segment $[a,b]$ if $p\in\bdry$.
    
    Define in a neighborhood of $(0,s_0,t_0)$ the smooth function  $F(\eps,s,t)=\sigma_e(s)-\Gamma(\eps,t)$ and note that
    \begin{equation}
        F(0,s_0,t_0)=0
    \end{equation}
    and 
    \begin{equation}
    	D_{(s_0,t_0)}F(0,\cdot,\cdot)= (\dot \sigma_e(s_0)\,\,|\,-\dot\gamma (t_0)),
    \end{equation}
    where we described the matrix by its column vectors. Therefore, $D_{(s_0,t_0)}F$ is invertible since $e$ is non tangential. Thus by the implicit function theorem there exist $\eps_p\in(0,\eps_0)$, $U_p\times  V_p$ neighborhood of $(s_0,t_0)$ and smooth functions $\sps,\tps : (-\eps_p,\eps_p)\to[a,b]$ such that for all $\eps\in(-\eps_p,\eps_p)$ and for all $(s,\,t)\in U_p\times  V_p$ we have :
    \begin{equation}\label{eqn:IFT}
        \sigma_e(s)=\Gamma(\eps,t) \Leftrightarrow (s,t)=(\sps(\eps),\tps(\eps)).
    \end{equation}
    Moreover, differentiating the equality $F(\eps,\sps(\eps),\tps(\eps))=0$ we get,
    \begin{equation}
        (\sps'(0),\tps'(0))^T=-D_{(s_0,t_0)}F^{-1}\partial_\eps F(0,s_0,t_0).
    \end{equation}
    And if we denote $J_0=\|J(t_0)\|$ and $\dot{\sigma}_e(s_0)=(u_1,u_2)$ in the basis $(\dot{\gamma}(t_0),J(t_0)/J_0)$ of $T_pM$ we have
    \begin{equation}\label{eqn:derivative s t}
        (\sps'(0),\tps'(0))=(J_0/u_2,J_0u_1/u_2).
    \end{equation}
    
    To prove the second equation of the lemma, since $\sps'(0)\ne0$ by \eqref{eqn:derivative s t}, $\sps$ is a homeomorphism around $s_0$ by the bijection theorem and therefore one can choose $\eps_1$ so that $\sps((-\eps_1,\eps_1))\subset U_p$. Then simply reduce $U_p$ to have $U_p=\sps((-\eps_1,\eps_1))$ which is actually open.
\end{proof}

To make sure that we do only finitely many operations (e.g. restrictions of neighborhoods) we also show the following lemma.

\begin{lem}\label{lem:finite number of meetings}
    There is only a finite number of points where $\gamma$ meets non tangential edges.
\end{lem}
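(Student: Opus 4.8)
The plan is to argue by compactness that the set of intersection points of $\gamma$ with non-tangential edges cannot have an accumulation point. First I would note that the tiling $T$ is locally finite and $M$ is compact, so the number of edges of $T$ is finite; call them $e_1,\dots,e_M$. It therefore suffices to show that for each fixed edge $e = e_k$, the set of times $t\in[a,b]$ with $\gamma(t)\in e$ and $e$ non-tangential to $\gamma$ at $\gamma(t)$ is finite. Suppose for contradiction this set is infinite; by compactness of $[a,b]$ it has an accumulation point $t_\star$, and passing to a subsequence we get distinct times $t_n\to t_\star$ with $\gamma(t_n)\in e$.

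The key point is to rule out such an accumulation. Since $e$ is the image of the embedding $\sigma_e:[0,1]\to M$, write $\gamma(t_n)=\sigma_e(s_n)$ with $s_n\in[0,1]$; again pass to a subsequence so that $s_n\to s_\star$, and by continuity $\gamma(t_\star)=\sigma_e(s_\star)=:q$. Now two cases arise at $q$. If $e$ is tangential to $\gamma$ at $q$, i.e. $\dot\gamma(t_\star)$ is parallel to $\dot\sigma_e(s_\star)$, then $q$ is an excluded point and contributes nothing; but we must still exclude the possibility that the $t_n$'s accumulate there while $e$ is non-tangential at each $\gamma(t_n)$ — this cannot happen, since non-tangentiality is an open condition along $e$, so on a neighborhood of $s_\star$ in $[0,1]$ the edge is non-tangential at every point except possibly $s_\star$ itself, hence all but finitely many $\gamma(t_n)$ lie in the non-tangential part, which is the generic situation. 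If instead $e$ is non-tangential to $\gamma$ at $q$ (the substantive case), then the matrix $(\dot\sigma_e(s_\star)\mid-\dot\gamma(t_\star))$ is invertible, so the map $(s,t)\mapsto\sigma_e(s)-\gamma(t)$ has invertible differential at $(s_\star,t_\star)$; by the implicit function theorem (exactly as in \cref{lem:parametrizations nontangential edges} with the variation parameter $\eps$ frozen at $0$), there is a neighborhood of $(s_\star,t_\star)$ in which $\sigma_e(s)=\gamma(t)$ forces $(s,t)=(s_\star,t_\star)$. This contradicts having infinitely many distinct pairs $(s_n,t_n)$ in that neighborhood.

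Combining the two cases: the accumulation times $t_n$ must, for $n$ large, be non-tangential intersection points lying in a neighborhood where the implicit function theorem forces uniqueness, a contradiction. Hence the intersection set is finite for each $e_k$, and summing over the finitely many edges gives the claim. The main obstacle here is purely bookkeeping: one has to separate cleanly the points where $e$ happens to be tangential to $\gamma$ (which are excluded from the count and are harmless) from the non-tangential ones where the implicit function theorem applies, and to make sure that tangentiality being a closed condition on $[0,1]$ does not allow non-tangential intersection points to pile up against a tangential one — but this follows immediately because near such a point the implicit-function argument still applies to all the non-tangential neighbours.
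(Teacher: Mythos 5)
Your substantive case is exactly the paper's proof: reduce to a single edge using local finiteness of the tiling and compactness of $M$, extract convergent subsequences $s_n\to s_\star$, $t_n\to t_\star$ by compactness of $[0,1]$ and $[a,b]$, and apply the implicit function theorem (i.e.\ \cref{lem:parametrizations nontangential edges} at $\eps=0$) at the limit point to force $(s_n,t_n)=(s_\star,t_\star)$ for large $n$, a contradiction. That part is fine.

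The problem is your first case, where the $t_n$ accumulate at a point $q$ at which $e$ is tangential to $\gamma$. First, this case does not arise: under hypothesis \ref{ass:nontangential} of \cref{lem:integral if nontangential nonvanishing}, which is in force throughout this subsection, a \emph{non tangential edge} means an edge that is nowhere tangential to $\gamma$ (every interior edge is either entirely along $\gamma$ or never tangential to it), so the limit $q$ is automatically a non-tangential intersection and the IFT applies there. Second, and more importantly, the way you dismiss this case is not a valid argument: openness of non-tangentiality along $e$ does not prevent infinitely many transversal intersection points from accumulating at a tangency. In the Euclidean plane the graph of $x\mapsto e^{-1/x^2}\sin(1/x)$ is a smooth embedded curve meeting the $x$-axis transversally at every $x=1/(n\pi)$, and these crossings accumulate at the origin, where the two curves are tangent. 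In such a situation the implicit function theorem gives a uniqueness neighbourhood around each $(s_n,t_n)$, but these neighbourhoods shrink as $n\to\infty$, and there is no uniqueness neighbourhood around $(s_\star,t_\star)$ because the differential $(\dot\sigma_e(s_\star)\mid-\dot\gamma(t_\star))$ is singular there; no contradiction results. So with the pointwise reading of ``non-tangential'' that you adopt, the statement would in fact be false for a general smooth edge; it is precisely hypothesis \ref{ass:nontangential} that excludes this. The repair is simply to delete your first case and invoke that hypothesis.
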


\begin{proof}
    Let $e$ be a non tangential edge and suppose that there exist infinitely many distinct $t_n\in [a,b]$ and $s_n\in [0,1]$ such that $\sigma_e(s_n)=\gamma(t_n)$ for every $n$. Then by compactness of both intervals (extracting if necessary) we may assume that $t_n \to t_0$ and $s_n\to s_0$. By continuity of both maps we have that $\sigma_e(s_0)=\gamma(t_0)=:p$ and we can apply \cref{lem:parametrizations nontangential edges} to parametrize this intersection near $p$.
    
    For $n$ big enough, $(s_n,t_n)\in U_p\times V_p$ and by the parametrization given by \eqref{eqn:IFT} $(s_n,t_n)=(s_0,t_0)$ which is a contradiction since we chose them distinct.
    
    Each non tangential edge has therefore a finite number of meeting points with $\gamma$ and since there is a finite number of edges in the tiling, there is only a finite number of points where $\gamma$ meets an edge of the tiling.
\end{proof}

We may now start the proof.

\begin{proof}[Proof of the \cref{lem:integral if nontangential nonvanishing and simple}]
    
    First remark that \cite[Lemma~7.3]{KS} shows that since $\gamma$ is injective then $\Gamma$ is a diffeomorphism for $\eps_0$ small enough since $\Gamma$ is a normal variation through geodesics and $J$ never vanishes by \ref{ass:nevervanishing}.
    
    To prevent the edges from meeting in $\tubb[\eps_0]$ let us first reduce $\eps_0$ such that $\tub[\eps_0]$ contains only the vertices which are on $\gamma$. It is possible because $\Gamma$ is continuous, the time interval is compact and
    there is only a finite number of vertices.
    
    Denote $E$ the set of non tangential edges $e$ meeting $\gamma$ and intersecting $\tubb[\eps_0]$. For every edge $e\in E$ and apply \cref{lem:parametrizations nontangential edges} at each intersection point $p$ of $e$ with $\gamma$ to find the neighborhoods $U_p$ and $V_p$, $\eps_p>0$ and the smooth functions $s_{e,p}$ and $t_{e,p}$. 
    
    Define $\eps_1=\min(\min_{(e,p)}\eps_p,\eps_0)$ where the second minimum is taken over all couple $(e,p)$ with $e\in E$ and $p$ meeting points of $\gamma$ with $e$. There are a finite number of meeting points by \cref{lem:finite number of meetings}, so $\eps_1>0$. We can also reduce the neighborhoods $U_p$ if necessary so that for two meeting points $p\ne p'$ of $e$ with $\gamma$, $U_p\,\cap\,U_{p'}=\varnothing$. 
    
    
    Now we make sure that all the portions of edges that intersect $\tubb$ are parametrized by reducing $\tubb$ enough so that only the parametrized portions are left. For an edge $e\in E$, define $K_e=\sigma_e([0,1]\setminus\bigcup_p\,U_p)$ where the union is taken over the meeting points $p$ of $e$ with $\gamma$. $K_e$ is compact and $d(K_e,\gamma)>0$ so one can choose $\eps_e$ small enough such that $\tub[\eps_e]\subset K_e^C$. Reducing $\eps_1\leq\min_{e\in E}\eps_e$ if necessary, we then have that for $e\in E$,
    \begin{equation}\label{eqn:full parametrization}
        \sigma_e(s)=\Gamma(\eps,t) \Leftrightarrow \exists!\, p,\, (s,t)=(\sps(\eps),\tps(\eps)).
    \end{equation}
    In fact, if $\sigma_e(s)=\Gamma(\eps,t)$ then $s\in U_p$ for a unique meeting point $p$ by the previous reduction
    (unique since the sets $U_p$ do not overlap), by the second equation of \eqref{eqn:time reduction} $s=\sps(\tilde\eps)$, and $\tilde\eps=\eps$ since $\Gamma$ is a diffeomorphism. Moreover, $\Gamma(\eps,\tps(\eps))=\sigma_e(\sps(\eps))$ by \cref{lem:parametrizations nontangential edges} thus $t=\tps(\eps)$ since $\Gamma$ is a diffeomorphism.
    
    Now that the parametrization is done, one only needs to check that we can actually order the time parametrizations. 
    First, enumerate in any order the set $\{ \tps,\,e\in E,\,p\text{ meeting point}\}$ (finite) by $\{t_i\}_{i\in\libr1,N+1\ribr}$. To show that we can order the functions $t_i$ such that 
    \begin{equation}
        t_1<...<t_{N+1} \text{ on } (0,\eps_1),
    \end{equation}
    we apply the intermediate value theorem to the continuous functions $t_i$ on the interval $(0,\eps_1)$ since they cannot meet :
    \begin{equation}\label{eqn:times does not meet}
        \forall (e,p)\ne (e',p'),\, \forall \eps\in (0,\eps_1),\, \tps(\eps)\ne \tps[e',p'](\eps).
    \end{equation}
    In fact, suppose $\tps(\eps)=\tps[e',p'](\eps)$ for $\eps\in(0,\eps_1)$. Denote by $q:=\Gamma(\eps,\tps(\eps))=\Gamma(\eps,\tps[e',p'](\eps))$. By \eqref{eqn:full parametrization}, $q\in\sigma_e(U_p)\cap\,\sigma_{e'}(U_{p'})$. On the first hand if $e\ne e'$, $q$ is a vertex in $\tubb$ which is excluded by the second step of the proof. On the other hand, if $e=e'$ since the sets $U_p$ do not overlap, the sets $\sigma_e(U_p)$ do not either and $p=p'$.
    
    There exist $a_i\in[a,b]$ values of $f$ such that
    \begin{equation}\label{eqn:f constant}
        \forall i\in\libr1,N\ribr,\forall\eps\in(0,\eps_1), f\circ\gamma^\eps\equiv a_i \text{ on } (t_{i}(\eps),t_{i+1}(\eps)).
    \end{equation}
    In fact, for $\eps\in(0,\eps_1)$ if $t\in(t_{i}(\eps),t_{i+1}(\eps))$ then $\gamma^{\eps}$ does never touch an edge of the tiling and therefore $f\circ\gamma^{\eps}$ is constant. 
    Indeed, since $p\in\inter{M}$, boundary edges around ending points are in $E$ and by assumption, tangential geodesic edges are segments of $\gamma$. So if
    $\gamma^{\eps}(t)=\Gamma(\eps,t)=\sigma_e(s)$ with $\eps>0$ for an edge $e$ then $e\in E$ and by \eqref{eqn:full parametrization} $t=\tps(\eps)=t_j(\eps)$ for an index $j$.
    
\end{proof}

\subsection{Geometric assumptions : simple manifolds with geodesic tilings}
We must now give geometric assumptions on the manifold and the tiling implying \ref{ass:nontangential} and \ref{ass:nevervanishing} of \cref{lem:integral if nontangential nonvanishing}.

\begin{defin}[Geodesic tiling]\label{def:geodesic tiling}
    We say that a regular tiling of a Riemannian 2-dimensional manifold with boundary is a geodesic tiling if the edges of the tiling are either geodesic segments or boundary segments. 
\end{defin}
Geodesic tilings will be the most natural tilings satisfying the assumption \ref{ass:nontangential} of \cref{lem:integral if nontangential nonvanishing}
merely by the uniqueness of a geodesic with given starting point and velocity.

For the assumption \ref{ass:nevervanishing}, we show that on a simple manifold, one can always find a non vanishing Jacobi field with a given initial value at a point.
\begin{lem}\label{lem:existence of jacobi simple}
    Let $p\in M$ and $v\in T_pM \setminus \{0\}$. Let also $\gamma$ be a maximal geodesic starting at $p$.
    If $M$ is simple then there exist a Jacobi field $J$ along $\gamma$ which is never vanishing and such that $J(0)=v$.
\end{lem}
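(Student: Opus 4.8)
The plan is to exploit two standard facts about simple manifolds: the exponential map $\exp_p$ is a diffeomorphism onto $M$, and the absence of conjugate points means that no nontrivial Jacobi field along $\gamma$ vanishes at two distinct parameter values. First I would reduce to the case of Jacobi fields that are orthogonal to $\gamma$. Write $J = J^\top + J^\perp$ where $J^\top(t) = (c + dt)\,\dot\gamma(t)$ is the tangential part; the tangential part of a Jacobi field is always affine in $t$, so it vanishes at most once, and I can choose its constants so that $J^\top(0) = \langle v, \dot\gamma(0)\rangle\,\dot\gamma(0)$ while $J^\top$ never vanishes on the (compact) parameter interval $[a,b]$ — for instance by taking $c = \langle v,\dot\gamma(0)\rangle$ and $|d|$ small enough, or $d=0$ if $c \neq 0$, and a separate easy argument if $c = 0$ (then just take $J^\top \equiv 0$ is not allowed since we might need it nonzero; instead note that if $\langle v,\dot\gamma(0)\rangle = 0$ the tangential part can be taken identically zero and the whole burden falls on $J^\perp$, whose initial value $v$ is then already orthogonal). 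So it suffices to produce a nowhere-vanishing normal Jacobi field with prescribed initial value, using that the normal and tangential parts of a Jacobi field are independently Jacobi fields.

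For the normal part I would use the no-conjugate-points hypothesis directly. Fix the endpoints: let $q_- = \gamma(a)$ and $q_+ = \gamma(b)$ be the boundary points (after extending $\gamma$ slightly beyond $M$ if necessary, as done elsewhere in the paper, so the interval is genuinely compact). The space of normal Jacobi fields along $\gamma$ is two-dimensional. Consider the two one-dimensional subspaces $\mathcal{J}_-$ of normal Jacobi fields vanishing at $a$ and $\mathcal{J}_+$ of those vanishing at $b$; since there are no conjugate points, $\mathcal{J}_- \cap \mathcal{J}_+ = \{0\}$, and a nonzero element of $\mathcal{J}_-$ vanishes \emph{only} at $a$ (again no conjugate points), similarly for $\mathcal{J}_+$. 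Pick generators $Y_-$ and $Y_+$. A general normal Jacobi field is $J = \alpha Y_- + \beta Y_+$; it vanishes at some $t_0 \in (a,b)$ iff $Y_-(t_0)$ and $Y_+(t_0)$ are linearly dependent there (they are both normal, hence scalar multiples of the unit normal field, so "linearly dependent" just means one of the two real functions has a zero). The key point is that $Y_-$ is nonzero on $(a,b]$ and $Y_+$ is nonzero on $[a,b)$, and their ratio $Y_+/Y_-$ (a well-defined smooth function on $(a,b)$, extended by its limits, which are $0$ at $b$ and $\pm\infty$ at $a$) is strictly monotone — this last is the Sturm-type fact equivalent to absence of conjugate points. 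Hence for $\alpha,\beta$ with the same sign (relative to a suitable orientation) the combination $\alpha Y_- + \beta Y_+$ never vanishes on $[a,b]$.

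It then remains to match the initial condition. The evaluation-at-$a$ map $J \mapsto (J(a), D_t J(a))$ is a linear isomorphism from the two-dimensional space of normal Jacobi fields onto $(T_p\Sigma)^{?}$ — more precisely onto $N_p \times N_p$ where $N_p$ is the one-dimensional normal line to $\dot\gamma(0)$ — so I can hit any prescribed value $J(0) = v^\perp$ and any prescribed derivative. The subtlety is that I need to hit a prescribed \emph{value} while keeping $J$ in the nowhere-vanishing open cone described above; but $Y_+(a) \neq 0$ (since $Y_+$ vanishes only at $b$), so already the single field $\beta Y_+$ realizes every prescribed value $J(0)$ by choosing $\beta$ appropriately, and $\beta Y_+$ is nowhere zero on $[a,b)$ — I only need to perturb by a small same-sign multiple $\alpha Y_-$ to also remove the zero at $b$ without creating a new one or changing the value at $a$... wait, $Y_-(a) = 0$, so adding $\alpha Y_-$ does not change $J(a)$ at all. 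Thus $J := \beta Y_+ + \alpha Y_-$ with $\beta$ chosen so $\beta Y_+(a) = v^\perp$ and $\alpha$ a small multiple with the sign making the combination zero-free, together with the tangential part handled as above, is the desired Jacobi field. The main obstacle I anticipate is making the "strict monotonicity of $Y_+/Y_-$" / zero-free-cone argument precise and clean — this is exactly the content of the no-conjugate-points hypothesis and is standard, but stating it without a Sturm comparison digression requires care; everything else (tangential part, matching initial data, compactness of the extended interval) is routine.
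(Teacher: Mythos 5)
Your proof is correct, but it takes a genuinely different route from the paper's. The paper's argument is a two-line trick: since $M$ is simple it extends to a strictly larger simple manifold $\tilde{M}$; picking $t_0$ with $q=\gamma(t_0)\in\tilde{M}\setminus M$, the absence of conjugate points makes the two-point boundary problem $J(t_0)=0$, $J(0)=v$ uniquely solvable, and since a nontrivial Jacobi field on a manifold without conjugate points vanishes at most once, the unavoidable zero of $J$ sits at $t_0$, outside $M$. Your argument is intrinsic (no extension of the metric is needed for this lemma) and more constructive: after splitting off the tangential part $(c+dt)\dot\gamma$ — and noting that since the two parts are pointwise orthogonal it suffices that one of them be nowhere vanishing — you build the normal part in the basis $\{Y_-,Y_+\}$ of normal Jacobi fields vanishing at the respective endpoints, observe that the no-conjugate-points hypothesis forces the scalar components $y_\pm$ to be one-signed on the open interval, and conclude that any same-sign combination $\alpha Y_-+\beta Y_+$ with $\alpha,\beta\neq 0$ is zero-free while $\beta$ alone already fixes the value at the initial endpoint (because $Y_-$ vanishes there). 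Two small remarks: the "strict monotonicity of $Y_+/Y_-$" you worry about is not actually needed — the positivity of both $y_\pm$ on the open interval plus the nonvanishing of each at the opposite endpoint already gives the zero-free cone, and the monotonicity itself is just the constancy of the Wronskian, not a Sturm comparison; and the phrase "vanishes at $t_0$ iff $Y_-(t_0)$ and $Y_+(t_0)$ are linearly dependent" should be replaced by the scalar condition $\alpha y_-(t_0)+\beta y_+(t_0)=0$, which is what your sign argument actually rules out. The trade-off is clear: the paper's proof is shorter but leans on the extendability of simple manifolds; yours is longer and requires the tangential/normal case analysis, but stays inside $M$ and exhibits the Jacobi field explicitly in a natural basis.
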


\begin{proof}
    Since $M$ is simple, we can extend $M$ to $\tild{M}$ a simple manifold which contains a neighborhood of $M$. Extend also the geodesic $\gamma$ to $\tild{M}$. Let $t_0$ such that $q:=\gamma(t_0)\in\tild{M}\setminus M$. Since $\tild{M}$ is simple, $q$ and $p$ are not conjugate points and therefore the two-point boundary problem for Jacobi fields is uniquely solvable (see \cite[Exercise~10.2]{RM}) or in other words there exists a unique Jacobi field $J$ along $\gamma_\theta$ such that $J(t_0)=0$ and $J(0)=v$. Since $\tild{M}$ is simple and $J(0)=v\ne0$, $J$ does not vanish except on $t_0$ and therefore does not vanish in $M$. Replacing $J$ by its restriction to $M$ we proved the lemma.
\end{proof}

\subsection{Proof of \cref{lem:reconstruction simplegeod iteration step}}
\label{subsctn:proof reconstruction simplegeod}



The following proof uses only the Jacobi fields of the variations that we use in the general case when $p\in\inter{M}$.

\begin{proof}[Proof of \cref{lem:reconstruction simplegeod iteration step}]
    The structure is the same as in \cref{lem:reconstruction noass iteration step} and we take also the same notations.
    
    Again, the simplices of the first type intersect $\{\phi>c\}$.
    
    Here, the tangent tiles intersect also $\{\phi>c\}$. In fact, if $w\in H_0\cap C_p\Delta$ since tangent cones have non empty interior and $C_p\Delta\cap H_- =\varnothing$, $w$ must be the tangent vector to an edge of $\Delta$. Since the tiling is geodesic, this edge is a geodesic with velocity $w$ at $p$ tangential to $\Sigma$. Since $\Sigma$ is strictly convex, this edge and therefore $\Delta$ also intersect $\{\phi>c\}$.

    The goal is thus to use the \cref{lem:corner reconstruction} to reconstruct the unknown values of $f$ on tiles which do not intersect $\{\varphi>c\}$ which are among the corner tiles. If there are corner tiles, $p$ must be a vertex.
    
    To define the variation through geodesics, take the same notations as in \cref{subsctn:corner variation}.
    Take $J_\theta$ a non vanishing normal Jacobi field along $\gamma_\theta$ such that $J_\theta(0)=w_{\theta+\pi/2}$ given by the \cref{lem:existence of jacobi simple} and denote $\Gamma_\theta : (-\eps_0,\eps_0)\times[a,b] \to M$ any variation through geodesics starting from $\gamma_\theta$ with Jacobi field $J_\theta$ and $\gamma^\eps_\theta=\Gamma(\eps,\cdot)$.

    By the \cref{lem:integral if nontangential nonvanishing}, there exist $\eps_1>0$, $N\in\N$, smooth functions $t_i : [0,\eps_1)\to[a,b]$ for $i\in\libr1,N+1\ribr$ and $(a_i)_{i\in\libr1,N\ribr}$ values of $f$ associated to tiles $\Delta_i$ such that,
    \begin{equation}\label{eqn:use of lemma for reconstruction}
        \forall \eps\in(0,\eps_1), \int_{\gamma^\eps_\theta}{f\,ds}=\sum_{i=1}^{N}{a_i(t_{i+1}(\eps)-t_{i}(\eps))}.
    \end{equation}
    
    Define $\mathcal{I}$ as the set of indices $i$ such that $t_i$ and $t_{i+1}$ are the meeting time of an edge which is not in $\mathcal{C}$. Then,
    \begin{equation}\label{eqn:integral in corner}
        \int_{\gamma^\eps_\theta\cap C}{f\,ds}=\int_{\gamma^\eps_\theta}{f\,ds}-\sum_{i\in\mathcal{I}}{a_i(t_{i+1}(\eps)-t_{i}(\eps))}.
    \end{equation}

    Let now $B(p,r)$ be the ball defined in \cref{sctn:corner}. By strict convexity of $\Sigma$, \cref{lem:geodesics in foliated manifolds} ensures that for small enough parameters $(\theta,\eps)$, $\gamma_\theta$ stays in $\{\phi>c\}$ outside $B(p,r)$. In $B(p,r)$, the values $a_i$ of $f$ with $i\in\mathcal{I}$ are known because the associated tiles are of first type or tangent tiles. Therefore, the values $a_i$ of $f$ for $i\in\mathcal{I}$ are known.
    
    Since the right hand side of \eqref{eqn:integral in corner} is known, we may use \cref{lem:THE reconstruction in a corner} to conclude. This last lemma uses only the initial derivatives in $\eps$ of the left hand side. Since the initial derivatives of the meeting times $t_i$ can be expressed thanks to the Jacobi field of the variation by \eqref{eqn:derivative s t}, the knowledge of the Jacobi field and the ray transform of $f$ allows us to reconstruct $f$.
\end{proof}

\begin{rk}
	This proof shows that we may use only vertices of the tiling in the reconstruction when $M$ is simple and the tiling is geodesic.
\end{rk}

    \section{Stability}\label{sctn:stability}
        Let us now discuss the stability of the reconstruction method presented in the previous sections. As we said in the introduction, the goal of this section is to bound from above a norm of $f$ by a suitable norm of its ray transform.

\subsection{Regularity of the ray transform}\label{sctn:regularity}
Since the reconstruction method relies on taking derivatives of some smooth parts of integrals, a natural norm one could choose for the ray transform of a function would be linked to the regularity of the function : $C^k$-norm, Hölder norm, ... but here the function $\ray$ is not even continuous in general because $f$ is not continuous as shown by the example below. 

\begin{ex}\label{ex:non continuity circle}
    In $\R^2$ with the Euclidean metric, define $M=\bar D(0,1)$ the closed unit disk which is a compact manifold with strictly convex boundary for the induced metric. Take the tiling given by the cut in four equal radial parts by the canonical axes and let $f$ be equal to $1$ on each tile. Consider the lines $\ell_\eps$ defined by $y=\eps$. Then $\int_{\ell_\eps}{f\,ds}$ is not continuous at $\eps=0$ since $f$ is equal to $0$ on the edges. This lack of continuity here is due to the geodesics being entirely along an edge when $\eps=0$, exactly as in \cref{lem:integral if nontangential nonvanishing}.
\end{ex}

Since the ray transform is not even continuous everywhere, it is hopeless to put a norm linked with derivatives on the whole manifold. One may think after the previous example that $\ray$ could admit smooth extensions so that a suitable norm on the extension could bound the values of $f$. This is not true as shown by the following example.
\begin{ex}
    Let us study a simple case in $\R^2$ with $M=\bar D(1,1)$ with a tiling having $0$ as a vertex. To reconstruct $f$ in the corner at $p=0$ we must compute the integrals of $f$ over $\gamma_\theta^\eps$ and more specifically its derivatives in $\eps$ and $\theta$ at $0$, thus also of the positive ending time $t^+(\theta,\eps)$ of $\gamma_\theta^\eps$. However
        \begin{equation}
            t^+(\theta,\eps)=\sin(\theta)+\sqrt{\sin^2(\theta)+\eps\cos(\theta)-\eps^2}
        \end{equation}
        which is $C^1$ near $0$ for $\theta>0$ and $\eps>0$ but not at $(0,0)$. There is therefore no reason for the integrals to be $C^1$ at $(0,0)$. Moreover, here
	\begin{equation}
		\partial_\eps t^+(\theta,0)=\frac{1}{2\tan(\theta)},
	\end{equation}
	so $\partial_\eps t^+(\theta,0)$ is not even bounded for $\theta$ near $0$ so it does not admit any $C^1$ extension.
\end{ex}

\begin{rk}\label{rk:tangential singularities}
The two previous examples show the reason why $\ray$ may have singularities. A singularity may appear when $f$ is integrated along geodesics becoming tangent to a change of values of $f$.
\end{rk}

At each step of the reconstruction, the geodesics used in the proofs might be tangential to a change of values of $f$. Therefore, we will not be able to state a stability estimate involving some norm of $\ray$ but we will rather use norms of the restrictions of the integrals of $f$ along portions of geodesics considered in the reconstruction because these parts of integrals are smooth with respect to the parameters. These restricted integrals are not initially given in the problem, but they can be computed from the knowledge of the ray transform, the metric and the tiling. 
	
To formulate an estimate controlling all the values of $f$ at once, we will state stability estimates at each step of the reconstruction and then compile them to get a global one. Following the reconstruction exposed in \cref{sctn:iteration}, let $p\in M$ be a point around which we wish to reconstruct $f$, lying on a level set $\{\phi=c\}$ such that $f$ is known in the set $\{\phi>c\}$. Let $a$ be the value of $f$ on a tile $\Delta$ containing $p$. The stability estimate will depend on the type of $\Delta$. 

\subsection{Tangent tiles}

Suppose here that $\Delta$ is a tangent tile. If $\Delta$ intersects $\{\phi>c\}$ then $a$ is already known. If $\Delta$ does not intersect $\{\phi>c\}$, then we use the geodesics $\gamma^\eps$ defined in \cref{subsctn:proof reconstruction noass} to recover $a$.

In this last case, denoting $\kappa_\Delta(p)$ the curvature of the edge $e$ of $\Delta$ containing $p$ at this point and $\kappa(p)$ the curvature of the hypersurface $\{\phi=c\}$ at $p$ we have $\kappa_\Delta(p)>\kappa(p)$ so in particular $\kappa_\Delta(p)>0$.  Thus, a straightforward adaptation of the proof of \cref{lem:side tiles reconstruction} and \cref{lem:time side tiles reconstruction} with the hypersurface $e$ instead of $\bdry$ near $p$ gives the existence of $\eps_1>0$ so that we have at $p$,
    \begin{equation}\label{eqn:I_delta}
        \forall \eps\in[0,\eps_1], I_\Delta(\eps) := \int_{\gamma^\eps\cap\Delta}{f\,ds}=a\,t(\eps)
    \end{equation}
    where $t(\eps)$ is the smooth positive time at which $\gamma^\eps$ meets $e$ which satisfies $t'(0)=2/\kappa_\Delta(p)$. This yields the following lemma.
\begin{lem}
        If $\Delta$ is a tangent tile, then
		\begin{equation*}
		 |a| \leq \frac{\kappa_\Delta(p)}{2}\, \| I_\Delta \|_{C^1}
		\end{equation*}
		 where $\kappa_\Delta(p)>0$ is the curvature of the edge of $\Delta$ containing $p$ at this point.
    \end{lem}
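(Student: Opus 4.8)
The plan is to read off the bound directly from the identity \eqref{eqn:I_delta}, which already does all the real work. First I would recall that in the case at hand, $\Delta$ is a tangent tile not meeting $\{\varphi>c\}$, so by the discussion preceding the lemma there is $\eps_1>0$ with $I_\Delta(\eps)=a\,t(\eps)$ for all $\eps\in[0,\eps_1]$, where $t$ is smooth, $t(0)=0$ and $t'(0)=2/\kappa_\Delta(p)$ with $\kappa_\Delta(p)>0$. Differentiating this identity at $\eps=0$ gives $I_\Delta'(0)=a\,t'(0)=\frac{2a}{\kappa_\Delta(p)}$, hence $a=\frac{\kappa_\Delta(p)}{2}I_\Delta'(0)$.

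From there the estimate is immediate: $|a|=\frac{\kappa_\Delta(p)}{2}\,|I_\Delta'(0)|\leq\frac{\kappa_\Delta(p)}{2}\,\|I_\Delta\|_{C^1}$, since $|I_\Delta'(0)|$ is bounded by the $C^1$-norm of $I_\Delta$ on $[0,\eps_1]$ (the $C^1$-norm here being $\sup|I_\Delta|+\sup|I_\Delta'|$ over the relevant interval). I would write this as a two-line computation, explicitly citing the adapted versions of \cref{lem:side tiles reconstruction} and \cref{lem:time side tiles reconstruction} for the value $t'(0)=2/\kappa_\Delta(p)$ and the smoothness of $t$, so that the differentiation of \eqref{eqn:I_delta} at $0$ is justified.

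There is essentially no obstacle here — the only thing to be careful about is that \eqref{eqn:I_delta} is an identity of functions on a genuine interval $[0,\eps_1]$ (not just an asymptotic statement), so that both sides are $C^1$ there and term-by-term differentiation at the endpoint $\eps=0$ is legitimate; this is exactly what the cited smoothness of $t(\eps)$ provides. One should also note that $I_\Delta$ itself is computable from $\ray$, the metric and the tiling (the values of $f$ on the other tiles met by $\gamma^\eps$ being already known), so the norm on the right-hand side is a legitimate quantity to estimate against, in line with the general strategy of \cref{sctn:stability}.

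\begin{proof}
	We are in the case where $\Delta$ does not intersect $\{\varphi>c\}$, the other case being trivial since then $a$ is already known. As explained before the statement, a straightforward adaptation of \cref{lem:side tiles reconstruction} and \cref{lem:time side tiles reconstruction} to the strictly convex hypersurface $e$ in place of $\bdry$ yields $\eps_1>0$ and a smooth function $t:[0,\eps_1]\to\R$ with $t(0)=0$ and $t'(0)=2/\kappa_\Delta(p)$ such that \eqref{eqn:I_delta} holds, namely $I_\Delta(\eps)=a\,t(\eps)$ for all $\eps\in[0,\eps_1]$. In particular $I_\Delta$ is $C^1$ on $[0,\eps_1]$ and, differentiating this identity at $\eps=0$,
	\begin{equation*}
		I_\Delta'(0)=a\,t'(0)=\frac{2a}{\kappa_\Delta(p)},
	\end{equation*}
	so that $a=\frac{\kappa_\Delta(p)}{2}\,I_\Delta'(0)$. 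Since $\kappa_\Delta(p)>0$ we obtain
	\begin{equation*}
		|a|=\frac{\kappa_\Delta(p)}{2}\,|I_\Delta'(0)|\leq\frac{\kappa_\Delta(p)}{2}\,\|I_\Delta\|_{C^1},
	\end{equation*}
	which is the desired estimate. Finally, $I_\Delta$ is computable from $\ray$, the metric and the tiling: along $\gamma^\eps$ the geodesic leaves $\Delta$ into tiles lying in $\{\varphi>c\}$ (or into the unique simplex sharing with $\Delta$ the edge tangent to $\Sigma$, which is of the first type), on which $f$ is already recovered, so $\int_{\gamma^\eps\cap\Delta}f\,ds=\ray(\gamma^\eps)-\int_{\gamma^\eps\cap\Delta^C}f\,ds$ is known.
\end{proof}
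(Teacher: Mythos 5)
Your proof is correct and is essentially the paper's own argument: the paper derives the identity $I_\Delta(\eps)=a\,t(\eps)$ with $t$ smooth and $t'(0)=2/\kappa_\Delta(p)$ just before the lemma and simply states that "this yields the lemma," which is exactly the differentiation-at-zero step you carry out explicitly. Your added remarks on the computability of $I_\Delta$ and on the legitimacy of differentiating the identity on $[0,\eps_1]$ are consistent with the paper's setup and do not change the argument.
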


The previous lemma is worth to be restated in the particular case $p\in\bdry$, since in this case the edge of $\Delta$ containing $p$ is all along the boundary. The function $I_\Delta$ is then the full ray transform of $f$ and $\kappa_\Delta(p)$ is equal to $\kappa(p)$ the curvature of $\bdry$ at $p$. Since the set $V_\partial$ of vertices on the boundary is finite, we may choose $\eps_0>0$ so that \cref{eqn:I_delta} holds for each $p\in V_\partial$. If we denote $\nu(p)$ the inward pointing normal to the boundary at $p$ and if we define $N_{\eps_0} := \{ (p,v)\in V_\partial \times T_pM \,|\, \inner{v}{\nu(p)} \leq\eps_0\}$, we then have that $\ray$ is well defined and smooth on $N_{\eps_0}$.

\begin{cor}\label{cor:tangent tile on boundary}
	If $p\in\bdry$ and $\Delta$ is a tangent tile, then
		\begin{equation*}
		 |a| \leq \frac{\kappa(p)}{2}\, \| \ray_{|N_{\eps_0}} \|_{C^1}
		\end{equation*}
		 where $\kappa(p)>0$ is the curvature of $\bdry$ at $p$.
\end{cor}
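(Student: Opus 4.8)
The plan is to deduce \cref{cor:tangent tile on boundary} from the preceding \cref{lem} specialized to the case $p\in\bdry$, identifying the edge $e$ of $\Delta$ containing $p$ with the boundary $\bdry$ itself. The key observation is that a tangent tile at a boundary vertex $p$ must have one of its edges lying all along $\bdry$ (as recalled at the beginning of \cref{sctn:boundary}), so the curve $e$ appearing in the statement of the lemma is literally a segment of $\bdry$. Consequently $\kappa_\Delta(p)=\kappa(p)$, the curvature of $\bdry$ at $p$, and the positive time $t(\eps)$ at which $\gamma^\eps$ meets $e$ is exactly the non-trivial ending time of the geodesic $\gamma^\eps$ on $M$ (this is precisely the geodesic $\gamma_i^\eps$ from \cref{lem:side tiles reconstruction}). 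Hence the quantity $I_\Delta(\eps)=\int_{\gamma^\eps\cap\Delta}f\,ds$ coincides with the full ray transform $\ray(\gamma^\eps)=a\,t(\eps)$ for $\eps$ small, since the geodesic stays inside the single tile $\Delta$ until it exits through $\bdry$.

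First I would observe that the finite set $V_\partial$ of boundary vertices allows a \emph{uniform} choice of $\eps_0>0$: for each $p\in V_\partial$ the adapted form of \cref{lem:time side tiles reconstruction} produces some $\eps_1(p)>0$ on which \eqref{eqn:I_delta} holds with $t'(0)=2/\kappa(p)$, and we take $\eps_0$ to be the minimum over the finitely many $p$. On the set $N_{\eps_0}=\{(p,v)\in V_\partial\times T_pM\mid \inner{v}{\nu(p)}\le\eps_0\}$ the geodesics starting at boundary vertices with nearly tangential inward velocities remain in a single tile until they exit, so $\ray$ restricted to $N_{\eps_0}$ is well-defined and (being $a\,t(\eps)$ with $t$ smooth) is smooth there; this is what makes $\|\ray_{|N_{\eps_0}}\|_{C^1}$ a meaningful quantity.

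Then I would run the same estimate as in the tangent-tile lemma: from $\ray(\gamma^\eps)=a\,t(\eps)$ with $t(0)=0$ and $t'(0)=2/\kappa(p)$, differentiating at $\eps=0$ gives $\partial_\eps\ray(\gamma^\eps)|_{\eps=0}=a\cdot 2/\kappa(p)$, whence $|a|=\tfrac{\kappa(p)}{2}\,|\partial_\eps\ray(\gamma^\eps)|_{\eps=0}|\le\tfrac{\kappa(p)}{2}\,\|\ray_{|N_{\eps_0}}\|_{C^1}$, the last inequality because the $C^1$-norm dominates the first derivative in the $\eps$-direction. This is just \cref{eqn:I_delta} read off the boundary, with $I_\Delta$ replaced by $\ray_{|N_{\eps_0}}$.

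**The main obstacle**, such as it is, is purely bookkeeping: one must be careful that the $C^1$-norm on the right is taken over the \emph{whole} of $N_{\eps_0}$, which includes directions $v$ varying transversally to the purely normal one, whereas the reconstruction only needs the derivative in the single radial parameter $\eps$ corresponding to $v=\omega+\eps\nu$. Since that radial derivative is one of the partial derivatives controlled by the $C^1$-norm, the inequality goes through; but I would make explicit that the parametrization $\eps\mapsto(p,\omega+\eps\nu)$ traces a smooth curve inside $N_{\eps_0}$, so $\partial_\eps\ray(\gamma^\eps)$ is genuinely a directional derivative of $\ray_{|N_{\eps_0}}$ and is therefore bounded by $\|\ray_{|N_{\eps_0}}\|_{C^1}$. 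Everything else is an immediate specialization of material already proved.
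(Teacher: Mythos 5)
Your proposal is correct and follows essentially the same route as the paper: specialize the tangent-tile lemma to $p\in\bdry$, where the edge of $\Delta$ through $p$ lies along the boundary so that $\kappa_\Delta(p)=\kappa(p)$ and $I_\Delta$ coincides with the full ray transform, then take $\eps_0$ uniform over the finite set of boundary vertices so that $\ray$ is smooth on $N_{\eps_0}$ and the $C^1$-norm controls the derivative $\partial_\eps\ray(\gamma^\eps)|_{\eps=0}=2a/\kappa(p)$. Your closing remark that this derivative is a directional derivative of $\ray_{|N_{\eps_0}}$ along the curve $\eps\mapsto(p,\omega+\eps\nu)$ is a fair piece of extra bookkeeping that the paper leaves implicit.
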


\begin{rk}
	Following the \cref{rk:side tiles noncanonical method}, we could have recovered $f$ on the tangent tiles at the boundary using a geodesic for a fixed $\eps>0$ small enough and found a stability result for the $L^\infty$ norm of the ray transform but the constant involved in the estimate would depend on $\eps$.
\end{rk}

\subsection{Corner tiles}

Suppose now that $\Delta$ is a corner tile. If there are corner tiles containing $p$, then $p$ must be a vertex of the tiling. As always the meeting times with non tangential edges are parametrized by smooth times by the implicit function theorem. This is the reason why even in the general case, the integrals of $f$ restricted in the corner

\begin{equation}
	I_C(\theta,\eps):=\int_{\gamma_{\theta}^{\eps}\cap\,C}{f\,ds}
\end{equation}
where the geodesics $\gamma^\eps_\theta$ are defined in \cref{subsctn:middle tiles} are smooth for small enough parameters. The values on the tiles of $C$ are then given by \cref{eqn:values equal matrix derivatives} and if we denote here $A_C$ the matrix involved in this equation and $m_C$ the number ot tiles in the corner, we have the following result. 

\begin{lem}
	If $\Delta$ is a corner tile then $|a| \leq C_C\|I_C\|_{C^{m_C}}$ with $m_C$ the number of tiles in the corner $C$ and $C_C \propto \|A_C^{-1}\|_\infty$.
\end{lem}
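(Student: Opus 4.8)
The plan is to bound $|a|$ by tracing through the formula \eqref{eqn:values equal matrix derivatives} which reconstructs the vector $(a_1,\dots,a_{m_C})^T$ in the corner as $A_C^{-1}$ applied to a vector built from $F$ and its derivatives at $0$, where $F(t)=\cos^2(\arctan t)\,\partial_\eps I_C(\arctan t,0)$ as in \eqref{eq:def of F}. Since $a=a_{i_0}$ for some index $i_0$, it suffices to control $\|(a_1,\dots,a_{m_C})^T\|_\infty$, and for this one writes $\|(a_1,\dots,a_{m_C})^T\|_\infty \le \|A_C^{-1}\|_\infty\,\|b\|_\infty$ where $b=(F(0),\dots,F^{(m_C-1)}(0))^T$. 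So the whole point is to bound $\|b\|_\infty$, i.e. the values $|F^{(k)}(0)|$ for $0\le k\le m_C-1$, by a constant times $\|I_C\|_{C^{m_C}}$.

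First I would observe that $F$ is obtained from $(\theta,\eps)\mapsto I_C(\theta,\eps)$ by: taking one $\eps$-derivative at $\eps=0$, precomposing $\theta\mapsto\arctan t$, and multiplying by the smooth factor $\cos^2(\arctan t)$. Each of these operations is a smooth operation on a fixed compact neighbourhood of $0$, so by the chain rule and the Leibniz rule $F^{(k)}(0)$ is a finite linear combination — with coefficients depending only on the fixed functions $\arctan$ and $\cos^2$, hence on the geometry of the corner but not on $f$ — of the mixed partial derivatives $\partial_\theta^{j}\partial_\eps I_C(0,0)$ for $j\le k$. In particular for $k\le m_C-1$ these involve only derivatives of $I_C$ of total order at most $m_C$, so there is a constant $c_C$ (depending on $m_C$ and on the said fixed functions) with $|F^{(k)}(0)|\le c_C\,\|I_C\|_{C^{m_C}}$ for every $k\le m_C-1$, hence $\|b\|_\infty\le c_C\,\|I_C\|_{C^{m_C}}$. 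Combining, $|a|\le \|A_C^{-1}\|_\infty\,c_C\,\|I_C\|_{C^{m_C}}$, which is the claimed estimate with $C_C = c_C\,\|A_C^{-1}\|_\infty \propto \|A_C^{-1}\|_\infty$.

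The one genuine point that needs care — and which I expect to be the main obstacle — is justifying that $I_C$ really is $C^{m_C}$ near $(0,0)$ so that the above manipulations with its derivatives are legitimate; this is exactly the subtlety flagged in \cref{sctn:regularity}, where the \emph{full} ray transform fails to be even $C^1$ at corners. The resolution is that $I_C$ is the integral of $f$ over the portion of $\gamma_\theta^\eps$ inside the corner $C$, and by the proof of \cref{lem:THE reconstruction in a corner} this portion is cut out by the meeting times of $\gamma_\theta^\eps$ with the edges $e_1,\dots,e_{N+1}$ of the corner tiles, all of which are \emph{non-tangential} to $\gamma_\theta$ for $\theta$ near $0$ (they pass through the vertex $p$ transversally to $w_\theta$), so by the implicit function theorem argument of \cref{lem:parametrizations nontangential edges} these meeting times are smooth functions of $(\theta,\eps)$; writing $I_C(\theta,\eps)=\sum_i a_i\,(\text{difference of two such smooth times})$ exhibits $I_C$ as smooth, and in particular $C^{m_C}$, on a neighbourhood of $(0,0)$. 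Once this regularity is in hand the rest is the routine linear-algebra-and-chain-rule bookkeeping sketched above, and the proportionality $C_C\propto\|A_C^{-1}\|_\infty$ is immediate from the structure of \eqref{eqn:values equal matrix derivatives}.
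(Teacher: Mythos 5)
Your proposal is correct and follows the same route as the paper: smoothness of $I_C$ near $(0,0)$ via the implicit-function-theorem parametrization of the (non-tangential) meeting times, then the linear algebra of \eqref{eqn:values equal matrix derivatives} together with the chain/Leibniz rule applied to $F$ from \eqref{eq:def of F}, which is exactly where the paper's remark locates the proportionality constant. The paper leaves this bookkeeping implicit, and your count that $F^{(k)}(0)$ for $k\le m_C-1$ involves derivatives of $I_C$ of total order at most $m_C$ is precisely the justification of the $C^{m_C}$ norm in the statement.
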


\begin{rk}
	The proportionality coefficient in $C_C\propto \|A_C^{-1}\|_\infty$ comes from the derivation of the function defined by \cref{eq:def of F} and could thus be computed explicitly.
\end{rk}

This time, even in the case $p\in\bdry$, we may not state a stability estimate using $\ray$. In fact, when integrating along geodesics almost tangential to the boundary a lack of regularity comes from the ending times, see \cref{rk:sqrt singularity at boundary}. 

\subsection{Weak global estimate}

We may now compile all the previous estimates to get a global one. Since the number of tiles is finite, there is a finite number of points considered in the reconstruction. Denote $T$ the set of tiles which appear as tangent tiles in the reconstruction. For each vertex $p$ in the manifold we may define a corner $C$ with respect to the hypersurface $\{\phi = \phi(p)\}$ as in \cref{subsctn:corner def}. Denote then $\mathcal{C}$ the finite set of all corners defined that way in the manifold and for $C\in\mathcal{C}$ denote $m_C$ the number of tiles in $C$ and $A_C$ the matrix associated to $C$ defined in \cref{subsctn:corner reconstruction}. We may now state our global stability statement in the general case which directly follows from the previous sections.

\begin{prop}
	If $f$ is a piecewise constant function on a two-dimensional compact nontrapping Riemannian manifold with strictly convex boundary and if $\{a_i\}_{i\in I}$ are the values of $f$, then
	\begin{equation}
		\max_{i\in I} |a_i| \leq C_t \max_{\Delta\in T}\|I_\Delta\|_{C^1} + C_c \max_{C\in\mathcal{C}} ||I_C||_{C^{m_C}}
	\end{equation}
	where
	\begin{equation}
		C_t = \max_{\Delta\in T} \kappa_\Delta(p) \quad \text{ and } \quad C_c \propto \max_{C\in\mathcal{C}} ||A_C^{-1}||_\infty.
	\end{equation}
\end{prop}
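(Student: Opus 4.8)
The plan is to assemble the two local estimates of the preceding subsections into a single inequality, using the fact that the reconstruction scheme of \cref{sctn:iteration} involves only finitely many tiles, vertices and corners, so that all thresholds on the parameters can be chosen uniformly.

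First I would recall the inductive structure of the proof of \cref{main:noass}. Writing $c_1>\dots>c_K$ for the distinct values of $\max_{\Delta_i}\varphi$, the values of $f$ are recovered level by level, and at level $c_k$ each tile $\Delta$ with $\max_{\Delta}\varphi=c_k$ is reconstructed near a point $p\in\Delta\cap\{\varphi=c_k\}$ realizing this maximum. At such a $p$ the tile $\Delta$ cannot be of the first type, since that would force $\max_{\Delta}\varphi>c_k$; hence $\Delta$ is a tangent tile or a corner tile at $p$, and when it is a corner tile the point $p$ must be a vertex. Consequently every tile of the tiling, equivalently every value $a_i$, is recovered either as the value of $f$ on a tangent tile, so that $\Delta\in T$, or as a value of $f$ on a corner tile belonging to the corner $C\in\mathcal{C}$ attached to the vertex $p$ and the hypersurface $\{\varphi=\varphi(p)\}$.

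Next I would invoke the two local estimates. If $a_i$ is the value of $f$ on a tangent tile $\Delta$, the tangent-tile lemma gives $|a_i|\le\frac{\kappa_\Delta(p)}{2}\,\|I_\Delta\|_{C^1}$ on the fixed interval $[0,\eps_1]$ produced there, and since $\kappa_\Delta(p)>0$ this yields $|a_i|\le C_t\max_{\Delta\in T}\|I_\Delta\|_{C^1}$ with $C_t=\max_{\Delta\in T}\kappa_\Delta(p)$. If instead $a_i$ is the value of $f$ on a corner tile of some $C\in\mathcal{C}$, the corner-tile lemma gives $|a_i|\le C_C\,\|I_C\|_{C^{m_C}}$ with $C_C\propto\|A_C^{-1}\|_\infty$, hence $|a_i|\le C_c\max_{C\in\mathcal{C}}\|I_C\|_{C^{m_C}}$ with $C_c:=\max_{C\in\mathcal{C}}C_C\propto\max_{C\in\mathcal{C}}\|A_C^{-1}\|_\infty$. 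Taking the maximum over $i\in I$ and using that every $a_i$ falls into one of the two cases gives $\max_{i\in I}|a_i|\le C_t\max_{\Delta\in T}\|I_\Delta\|_{C^1}+C_c\max_{C\in\mathcal{C}}\|I_C\|_{C^{m_C}}$, which is the claim. Here the restricted integrals $I_\Delta$ and $I_C$ are the quantities already introduced in those subsections; they are smooth in the relevant parameters and computable from $\ray$, the metric and the tiling, but for the statement one only uses their norms as given.

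The one point that deserves care, and the only real obstacle, is the \emph{uniformity} of the parameter thresholds: the tangent-tile estimate and the corner-tile estimate each come with their own $\eps_1$, and for a corner with a range of admissible angles $\theta$, below which the restricted integral is smooth and the local bound holds. Since $M$ is compact and the tiling is locally finite, there are finitely many tiles, vertices and corners, so I would fix once and for all $\eps_0$ equal to the minimum of these finitely many thresholds over all $\Delta\in T$ and all $C\in\mathcal{C}$, and read every $C^1$- or $C^{m_C}$-norm on the corresponding fixed parameter domain. With this choice the proposition requires no new analytic input beyond the local estimates: it is a finite compilation of the results of the two previous subsections.
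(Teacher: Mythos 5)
Your proposal is correct and follows exactly the route the paper intends: the paper offers no written proof beyond ``directly follows from the previous sections,'' and your compilation of the tangent-tile and corner-tile estimates, together with the observation that every tile is recovered as a tangent or corner tile at the level set realizing $\max_\Delta\varphi$, is precisely that compilation. Your explicit attention to choosing a uniform parameter threshold over the finitely many tiles and corners is a point the paper leaves implicit, but it introduces no new difficulty.
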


We may state a more specific proposition if $M$ is simple and the tiling is geodesic since there are no tangent tiles but on the boundary and on these tile we may use a norm on the full ray transform by \cref{cor:tangent tile on boundary}.

\begin{prop}
	If $f$ is a piecewise constant function on a geodesic tiling of a simple two-dimensional compact nontrapping Riemannian manifold with strictly convex boundary and if $\{a_i\}_{i\in I}$ are the values of $f$, then
	\begin{equation}
		\max_{i\in I} |a_i| \leq C'_t \|\ray_{|N_{\eps_0}}\|_{C^1} + C_c \max_{C\in\mathcal{C}} ||I_C||_{C^{m_C}}
	\end{equation}
	where
	\begin{equation}
		C'_t = \max_{p\in\bdry} \kappa(p) \quad \text{ and } \quad C_c \propto \max_{C\in\mathcal{C}} ||A_C^{-1}||_\infty.
	\end{equation}
\end{prop}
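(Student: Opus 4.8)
The plan is to assemble the final proposition directly from the per-step stability estimates established earlier, exactly as done for the preceding general proposition, using the specific structural simplification available under the simplicity and geodesic-tiling hypotheses.

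First I would recall the reconstruction scheme of \cref{sctn:iteration} as improved in \cref{sctn:simplegeod}: following the strictly convex foliation $\varphi$, every value $a_i$ of $f$ is reconstructed at some point $p$ lying on a level set $\{\varphi=c\}$ with $f$ already known on $\{\varphi>c\}$, and the tile $\Delta$ carrying $a_i$ is of one of the three types from \cref{subsctn:foliation}. Tiles of the first type intersect $\{\varphi>c\}$, so their values are inherited and need no estimate. The crucial observation — already made in the proof of \cref{lem:reconstruction simplegeod iteration step} — is that under the geodesic-tiling hypothesis a tangent tile $\Delta$ also intersects $\{\varphi>c\}$: the tangent direction $w\in H_0\cap C_p\Delta$ must be the velocity at $p$ of a geodesic edge of $\Delta$ tangent to the strictly convex $\Sigma$, forcing $\Delta$ to poke into $\{\varphi<c\}$'s complement. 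Hence the only tangent tiles whose value genuinely needs to be solved for are those touching $\bdry$, and for those \cref{cor:tangent tile on boundary} gives $|a_i|\leq \tfrac{\kappa(p)}{2}\,\|\ray_{|N_{\eps_0}}\|_{C^1}$ with $N_{\eps_0}$ the fixed neighborhood of the (finite) vertex set $V_\partial$ in the inward boundary bundle on which $\ray$ is smooth.

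Next I would handle the corner tiles. Whenever $p$ carries a corner tile it is a vertex, and the reconstruction at a corner (\cref{lem:THE reconstruction in a corner}, via \eqref{eqn:values equal matrix derivatives} and \eqref{eq:def of F}) expresses the corner values through $A_C^{-1}$ applied to a vector built from $F$ and its first $m_C-1$ derivatives at $0$, where $F(t)=\cos^2(\arctan t)\,\partial_\eps I_C(\arctan t,0)$ and $I_C(\theta,\eps)=\int_{\gamma_\theta^\eps\cap C}f\,ds$. Since meeting times with non-tangential edges are smooth by the implicit function theorem, $I_C$ is smooth in $(\theta,\eps)$ near $(0,0)$, and the chain rule bounds $|F^{(k)}(0)|$ for $k\le m_C-1$ by a fixed multiple of $\|I_C\|_{C^{m_C}}$; composing with $\|A_C^{-1}\|_\infty$ yields the corner lemma $|a_i|\leq C_C\|I_C\|_{C^{m_C}}$ with $C_C\propto\|A_C^{-1}\|_\infty$.

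Finally I would compile: the tiling being regular on a compact manifold has finitely many tiles, hence the reconstruction visits finitely many points, producing a finite family of corners $\mathcal{C}$ (each attached to the hypersurface $\{\varphi=\varphi(p)\}$ through a vertex $p$, as in \cref{subsctn:corner def}) and a finite set of boundary tangent tiles. Taking the maximum over the finitely many estimates above and using $C'_t=\max_{p\in\bdry}\kappa(p)$ and $C_c\propto\max_{C\in\mathcal{C}}\|A_C^{-1}\|_\infty$ gives the stated inequality. I do not expect any genuine obstacle here: the content is entirely in the earlier lemmas, and the only point requiring care is making explicit that the geodesic-tiling hypothesis removes all interior tangent tiles from the list of unknowns — so that the full-ray-transform norm $\|\ray_{|N_{\eps_0}}\|_{C^1}$ really does control every tangent-tile contribution — together with the bookkeeping that all maxima are over finite index sets.
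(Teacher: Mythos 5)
Your proposal is correct and follows essentially the same route as the paper: the key point in both is that the geodesic-tiling and simplicity hypotheses force every interior tangent tile to intersect $\{\varphi>c\}$ (so its value is already known), leaving only boundary tangent tiles, which are controlled by the full ray transform norm via \cref{cor:tangent tile on boundary}, while corner tiles are handled exactly as in the general proposition. The compilation over the finitely many tiles and corners is the same bookkeeping the paper performs.
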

    
     \appendix
     \section*{Appendix}
     \renewcommand{\thesection}{A} 
\setcounter{section}{0}
\setcounter{thm}{0}
\setcounter{equation}{0}
\setcounter{figure}{0}
\setcounter{table}{0}


%
%
%

\subsection{Geodesics in foliated manifolds}

For $q\in M$ and $w\in T_qM$, denote $\gamma_{q,w}: I_{q,w}\to M$ the maximal geodesic with starting point $q$ and initial velocity $w$. 

\begin{lem}\label{lem:geodesics in foliated manifolds}
	Let $M$ be a compact nontrapping 2-dimensional Riemannian manifold with strictly convex boundary.
    Let $\phi : M \to \R$ be a strictly convex foliation of $M$ and $\min\varphi< c\leq\max\varphi$. Denote $\Sigma=\{\varphi=c\}$ and let $p\in\Sigma$ and $\omega\in T_p\Sigma$. Then for all neighborhood $U$ of $p$, there exist a neighborhood $V$ of $p$ and $W$ of $\omega$ such that for all $q\in V$ and $w\in W$, there exist $t_0,t_1\in I_{q,w}$ such that $\gamma_{q,w}$ meets $\Sigma$ only at $t_0$ and $t_1$ in $U$ and stays in $\{\phi>c\}$ outside $[t_0,t_1]$. 
\end{lem}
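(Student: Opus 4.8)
The plan is to reduce everything to a local statement about the geodesic $\gamma_{p,\omega}$ which is tangential to $\Sigma$ at the single point $p$, and then to propagate control to nearby $(q,w)$ by continuous dependence of geodesics on their initial data. First I would recall the key infinitesimal fact underlying strict convexity of the foliation: if $\gamma$ is a unit-speed geodesic and we set $\psi(t) = \varphi(\gamma(t))$, then $\psi'(t) = d\varphi_{\gamma(t)}(\dot\gamma(t))$ and $\psi''(t) = \mathrm{Hess}\,\varphi_{\gamma(t)}(\dot\gamma(t),\dot\gamma(t))$ since $\gamma$ is a geodesic. Because $\varphi$ is strictly convex, $\mathrm{Hess}\,\varphi$ is positive definite, so $\psi'' \geq \delta > 0$ uniformly on the compact set $\{\varphi \ge c\}$ (say), for unit vectors. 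In particular $\psi$ is strictly convex along every geodesic, which immediately gives that $\gamma$ meets any level set $\{\varphi = c\}$ in at most two points and, when it meets it in exactly two points $t_0 < t_1$, one has $\psi < c$ on $(t_0,t_1)$ and $\psi > c$ outside — wait, the orientation is the opposite: since $\psi$ is convex with a single interior minimum, it is $\psi < c$ between the two crossings; but the lemma asserts the geodesic stays in $\{\varphi > c\}$ outside $[t_0,t_1]$. This is consistent because for a geodesic emanating \emph{tangent} to $\Sigma$ from $p$, the convexity of $\psi$ with $\psi(p)=c$, $\psi'(p)=0$ forces $\psi \ge c$ with equality only at $p$; so near $p$ the tangent geodesic stays in $\{\varphi \ge c\}$, touching $\Sigma$ only at $p$.

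The second step is to set up the local picture at $(p,\omega)$ precisely. Since $\omega \in T_p\Sigma$ we have $\psi_{p,\omega}'(0) = 0$ and $\psi_{p,\omega}''(0) = \mathrm{Hess}\,\varphi_p(\omega,\omega) \ge \delta > 0$, so $t=0$ is a strict local minimum of $\psi_{p,\omega}$ and $\psi_{p,\omega}(t) > c$ for $t$ in a punctured neighborhood of $0$. Fix the target neighborhood $U$ of $p$; I would choose a small $\eta > 0$ so that the closed geodesic ball $\bar B(p,\eta) \subset U$ and so that on $[-\eta,\eta]$ (reparametrizing so $\gamma_{p,\omega}$ is defined there) we have $\psi_{p,\omega}'' \ge \delta/2$; then pick $\eta_0 < \eta$ with $\psi_{p,\omega}(\pm\eta) \ge c + \rho$ for some $\rho > 0$. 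The point is that $\gamma_{p,\omega}$ \emph{exits} the slab $\{|\varphi - c| < \rho\}$ transversally near the endpoints $\pm\eta$ while staying above $c$ throughout $[-\eta,\eta]\setminus\{0\}$.

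The third and main step is the perturbation argument. By smooth (in particular continuous) dependence of geodesics on initial conditions, for $(q,w)$ in a small neighborhood $V \times W$ of $(p,\omega)$ the geodesic $\gamma_{q,w}$ is defined on $[-\eta,\eta]$, stays in $U$ (shrinking $V,W$ if needed so $\gamma_{q,w}([-\eta,\eta]) \subset B(p,\eta)$), and has $\psi_{q,w} := \varphi \circ \gamma_{q,w}$ close to $\psi_{p,\omega}$ in $C^2([-\eta,\eta])$. Hence $\psi_{q,w}'' \ge \delta/4 > 0$ on $[-\eta,\eta]$, so $\psi_{q,w}$ is strictly convex there and has a unique minimum $t_* \in [-\eta,\eta]$; and $\psi_{q,w}(\pm\eta) \ge c + \rho/2 > c$. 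By strict convexity $\psi_{q,w} = c$ has at most two solutions in $[-\eta,\eta]$; whether there are $0$, $1$, or $2$ depends on the sign of $\min \psi_{q,w} = \psi_{q,w}(t_*)$ relative to $c$. When $\psi_{q,w}(t_*) < c$ there are exactly two, $t_0 < t_* < t_1$, with $\psi_{q,w} > c$ on $[-\eta,t_0) \cup (t_1,\eta]$ — this is exactly the conclusion, restricting to $U$. When $\psi_{q,w}(t_*) \ge c$ the geodesic never dips below $c$ in $U$; to still produce the required $t_0,t_1$ one uses that outside the slab the geodesic continues into $\{\varphi > c\}$ and, by the global strict convexity of $\psi_{q,w}$ along the \emph{maximal} geodesic together with the nontrapping hypothesis, it must cross $\Sigma$ exactly twice globally (this is the already-stated fact that a geodesic is tangent to a level set at at most one point, and crosses it transversally at exactly two points unless it passes through the minimum of $\varphi$) — so we take $t_0, t_1 \in I_{q,w}$ to be those global crossing times, which lie outside $[-\eta,\eta]$ hence outside $U$ when $V,W$ are small, making the statement "$\gamma_{q,w}$ meets $\Sigma$ only at $t_0,t_1$ in $U$" vacuously true on the $U$-side and consistent on the global side.

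\medskip

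The step I expect to be the main obstacle is the bookkeeping in this last case distinction: one must be careful that the conclusion as phrased ("meets $\Sigma$ only at $t_0$ and $t_1$ \emph{in} $U$") is correctly interpreted — namely that the only intersections of $\gamma_{q,w}$ with $\Sigma$ lying inside $U$ are among $\{t_0,t_1\}$, while $t_0,t_1$ themselves are the (globally unique) crossing times which may or may not be in $U$. The uniform lower bound on $\mathrm{Hess}\,\varphi$ over the relevant compact region, and the uniform $C^2$-closeness of $\psi_{q,w}$ to $\psi_{p,\omega}$, are what make the neighborhoods $V$ and $W$ extractable; everything else is the strict-convexity-of-$\psi$-along-geodesics computation, which I would attribute to the cited references \cite{PSUZ16,UD94} and the proof sketch already indicated in \cref{subsctn:foliation}.
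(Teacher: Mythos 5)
Your overall strategy coincides with the paper's: everything rests on the fact that $\psi_{q,w}:=\varphi\circ\gamma_{q,w}$ satisfies $\psi_{q,w}''=\mathrm{Hess}\,\varphi(\dot\gamma_{q,w},\dot\gamma_{q,w})>0$ (this is \cite[Theorem~2.2]{UD94}, which the paper's proof invokes), combined with continuous dependence of geodesics on the initial data $(q,w)$. The paper locates a critical point of $\psi_{q,w}$ between the two meeting times by Rolle's theorem and then uses the strict monotonicity of $\psi_{q,w}'$ on the whole maximal interval to conclude $\psi_{q,w}>c$ outside $[t_0,t_1]$. Your quantitative set-up (uniform lower bound on the Hessian, $C^2$-closeness on $[-\eta,\eta]$, the barrier $\psi(\pm\eta)\geq c+\rho$) is a more careful rendering of the same argument, and you rightly isolate the case the paper glosses over, namely nearby geodesics whose minimum of $\varphi$ does not drop below $c$.

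Your resolution of that last case, however, is wrong. If the interior critical point $t_*$ of the globally strictly convex function $\psi_{q,w}$ satisfies $\psi_{q,w}(t_*)\geq c$, then $\psi_{q,w}\geq c$ on all of $I_{q,w}$: a convex function cannot dip below its interior minimum value, so there are \emph{no} crossings of $\Sigma$ anywhere, and the assertion that the geodesic ``must cross $\Sigma$ exactly twice globally'' with crossing times outside $[-\eta,\eta]$ is false. You appear to be conflating the unique tangency point of the geodesic with some level set $\{\varphi=c'\}$, $c'\geq c$ (which does exist, at $t_*$), with transversal crossings of $\{\varphi=c\}$ (which do not exist here). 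The correct handling is that this case is the trivial one: $\gamma_{q,w}$ stays in $\{\varphi\geq c\}$ for all time, touching $\Sigma$ at most at the single point $t_*$, so one takes $t_0=t_1=t_*$ (or $t_0=t_1$ arbitrary if it never touches) and the conclusion holds. A secondary incompleteness: in your first case you only establish $\psi_{q,w}>c$ on $[-\eta,t_0)\cup(t_1,\eta]$, whereas the lemma asserts $\gamma_{q,w}$ stays in $\{\varphi>c\}$ outside $[t_0,t_1]$ on all of $I_{q,w}$; to close this you need exactly the paper's global step, namely that $\psi_{q,w}'$ is strictly increasing on $I_{q,w}$ and vanishes at $t_*\in(t_0,t_1)$, so $\psi_{q,w}$ is monotone on each side of $t_*$ and $\psi_{q,w}(t)\geq\psi_{q,w}(\pm\eta)>c$ for $\pm t\geq\eta$.
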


\begin{proof}
    By strict convexity of $\Sigma$ at $p$, there exist two neighborhoods $V$ of $p$ and $W$ of $\omega$ such that $\gamma_{q,w}$ meets $\Sigma$ in $U$ at two times $t_0\leq t_1\in I_{q,w}$ equal if and only if $q=p$ and $w=\omega$. 
    
    Therefore $\zeta:=\varphi\circ\gamma_{q,w}$ is equal to $c$ in $U$ either
    \begin{itemize}
        \item at the critical point $t_0=t_1=0$ if $(q,w)=(p,\omega)$,
        \item or else at $t_0<t_1$ and therefore has a critical point in between by Rolle's theorem.
    \end{itemize}
    Outside of these times, $\gamma$ stays in $\{\phi>c\}$. In fact, by \cite[Theorem~2.2]{UD94} the function $\zeta$ is strictly convex and so $\zeta$ has a strictly increasing derivative. Since its derivative vanishes at a point in $[t_0,t_1]$ we have that $\zeta$ is strictly decreasing before $t_0$ and strictly increasing after $t_1$. Since $\zeta(t_0)=\zeta(t_1)=c$. We can conclude from there that $\zeta>c$ outside the meeting times (possibly equal).  
    
\end{proof}

\subsection{Jerk of the boundary}
\label{subsctn:appendix jerk}

In this section we prove \cref{lem:parametrization boundary}. Take the same framework as in this lemma : $p\in \bdry$, $\nu$ the inward pointing normal to the boundary at $p$ and $\omega \in T_p\bdry$ a unit vector so that $(\omega,\nu)$ is an orthonormal basis of $T_pM$. Denote $(\phi,U)$ a normal coordinate chart of $p$ associated to this basis.

\begin{proof}[Proof of \cref{lem:parametrization boundary}]
	Let $(\psi,V)$ be a submanifold chart for $\bdry$ in $M$, such that $\bdry\cap V = \psi^{-1}(\R\times\{0\})$ and $d_p\psi(\omega)=(1,0)$. If we denote $g(x)=(w(x),z(x))=\phi \circ \psi^{-1}(x,0)$ then $w,z : \R \to \R$ are smooth near $0$ and $(w'(0),z'(0))=d_p\phi \circ d_{(0,0)}\psi^{-1}(1,0)=d_p\phi(\omega)=(1,0)$. Therefore, $w$ is a diffeomorphism near $0$. 
	
	Now replacing $\psi$ by the submanifold chart $(w\times Id)\circ \psi$ we get $g(x)=(x,h(x))$ where $h=z\circ w^{-1}$ is smooth near $0$. For $q$ near $p$ in the normal chart we have 
	\begin{align}
		\phi(q)=(x,y)\in \bdry &\Leftrightarrow \exists u\in\R, \phi^{-1}(x,y)=q=\psi^{-1}(u,0) \\
		&\Leftrightarrow \exists u\in\R, (x,y)=g(u)=(u,h(u)) \\
		&\Leftrightarrow y=h(x).
	\end{align}
	
	The only thing left to prove the Taylor's expansion of $h$ is that $h'(0)=0$ and $h''(0)=\kappa$. The first equality follows from $h'(0)=z'(0)w'(0)=0$. To prove the second consider a unit speed geodesic of the boundary $\bdry$, $\gamma(t)=(x(t),y(t))$ starting from $p$ with initial velocity $\omega$. We have that the initial covariant derivative along $\gamma$ of its velocity vector is given by 
	\begin{equation}\label{eqn:cov derivative}
	 D_t\dot\gamma (0) = \kappa\,\nu = (0,\kappa)
	 \end{equation} 
	by definition of the curvature $\kappa$. We have near $0$ that $y(t)=f(x(t))$ so $\ddot y(t)= \dot x(t)^2 h''(t) + \ddot x(t) h'(t)$. Since the Christoffel's symbols vanish at $p$ in the normal coordinate chart we have $\dot x(0) = 1$ and $D_t\dot\gamma (0)=(\ddot x(0),\ddot y(0))$. We may conclude by \eqref{eqn:cov derivative} that $\kappa=\ddot y(0) = h''(0)$.
\end{proof}
    
    \bibliographystyle{siam}
    \bibliography{mybib}
    
\end{document}